\newcommand{\dmn}{\Delta_{M,N}}
\newcommand{\djn}{\Delta_{[J],N}}
\newcommand{\Xn}{X^{(N)}}
\newcommand{\bXn}{{\bf X}^{(N)}}
\newcommand{\bx}{{\bf x}}
\newcommand{\bu}{{\bf u}}
\newcommand{\bv}{{\bf v}}
\newcommand{\bw}{{\bf w}}
\newcommand{\bff}{{\bf f}}
\newcommand{\bGamma}{{\bf \Gamma}}
\newcommand{\bvarphi}{{\bm \varphi}}
\newcommand{\bUpsilon}{{\bm \Upsilon}}
\newcommand{\bSigma}{{\bm \Sigma}}
\newcommand{\bTheta}{{\bm \Theta}}
\newcommand{\bpsi}{{\bm \psi}}
\newcommand{\bnu}{{\bm \nu}}
\newcommand{\by}{{\bf y}}
\newcommand{\bz}{{\bf z}}
\newcommand{\bA}{{\bf A}}
\newcommand{\bB}{{\bf B}}
\newcommand{\bD}{{\bf D}}
\newcommand{\bH}{{\bf H}}
\newcommand{\bg}{{\bf g}}
\newcommand{\calun}{\calu^{(N)}}
\newcommand{\Zn}{Z^{(N)}}
\newcommand{\bZn}{{\bf Z}^{(N)}}
\newcommand{\zx}{\zeta^{(\bx)}}
\newcommand{\xx}{\xi^{(\bx)}}
\newcommand{\bxxk}{ {\bm \xi}_{k,i}^{(\bx)}}
\newcommand{\bYn}{{\bf Y}^{(N)}}
\newcommand{\bun}{{\bf u}^{(N)}}
\newcommand{\rnd}{\mbox{\rm and}}
\newcommand{\ol}{\overline}
\newcommand{\one}[1]{\mbox {\bf 1}_{\{#1\}}}
\newcommand{\odin}{\mbox {\bf 1}}
\newcommand{\witi}{\widetilde}
\newcommand{\zz}{{\mathbb Z}}
\newcommand{\nn}{{\mathbb N}}
\newcommand{\rr}{{\mathbb R}}
\newcommand{\cala}{{\mathcal A}}
\newcommand{\calc}{{\mathcal C}}
\newcommand{\cale}{{\mathcal E}}
\newcommand{\calr}{{\mathcal R}}
\newcommand{\calm}{{\mathcal M}}
\newcommand{\caln}{{\mathcal N}}
\newcommand{\calu}{{\mathcal U}}
\newcommand{\calf}{{\mathcal F}}
\newcommand{\mz}{{\chi_{eq} }}
\newcommand{\bchi}{{\bm \chi }}
\newcommand{\bmz}{{\bm {\chi_{eq}}}}
\newcommand{\bmza}{{\bm {\chi^{(1)}_{eq}}}}
\newcommand{\bmzb}{{\bm {\chi^{(2)}_{eq}}}}
\newcommand{\veps}{\varepsilon}
\newcommand{\be}{{\bf e}}
\newcommand{\bb}{{\bf b}}
\newcommand{\bR}{{\bf R}}
\newcommand{\bX}{{\bf X}}
\newcommand{\bU}{{\bf U}}
\newcommand{\bo}{{\bf 0}}
\newcommand{\beq}{\begin{eqnarray*}}
	\newcommand{\feq}{\end{eqnarray*}}
\newcommand{\beqn}{\begin{eqnarray}}
\newcommand{\feqn}{\end{eqnarray}}
\newcommand{\as}{a.\,s.}
\newtheorem{theorem}{Theorem}
\makeatletter \@addtoreset{theorem}{section}\makeatother
\newcounter{vadik}
\newcounter{reza}
\newtheorem{definition}[theorem]{Definition}
\newtheorem{lemma}[theorem]{Lemma}
\newtheorem{assume}[theorem]{Assumption}
\newtheorem*{theorema*}{Theorem~A}
\newtheorem{conj}[theorem]{Conjecture}
\newtheorem*{surjt*}{Surjectivity Theorem \cite{garay}}
\newtheorem*{theoremb*}{Theorem~B}
\newtheorem*{cld*}{Condition $\mbox{LD}_d$}
\newtheorem*{theorem*}{Theorem}
\newtheorem{theorema}[vadik]{Theorem}
\newtheorem{theoremc}[reza]{Theorem}
\newtheorem{proposition}[theorem]{Proposition}
\newtheorem{corollary}[theorem]{Corollary}
\newtheorem{remark}[theorem]{Remark}
\newtheorem{example}[theorem]{Example}
\title{Extinction scenarios in evolutionary processes: A Multinomial Wright-Fisher approach}
\date{December 2, 2019}
\author{Alexander~Roitershtein \thanks{Department of Statistics, Texas A\&M University, College Station, TX 77843, USA;
\newline e-mail: alexander@stat.tamu.edu; ORCID ID: 0000-0001-8207-4289}
\and
Reza~Rastegar\thanks{Occidental Petroleum Corporation, Houston, TX 77046 and Departments of Mathematics and Engineering, University of Tulsa, OK 74104, USA - Adjunct Professor;
e-mail:  reza\_rastegar2@oxy.com; ORCID ID: 0000-0003zz-1011-651X}
\and Robert S. Chapkin\thanks{Department of Nutrition and Food Science - Program in Integrative Nutrition and Complex Diseases, Texas A\&M University, College Station, TX 77843; e-mail: r-chapkin@tamu.edu; ORCID ID: 0000-0002-6515-3898}\and Ivan Ivanov\thanks{Corresponding author: Department of Veterinary Physiology and Pharmacology, Texas A\&M University, College Station, TX 77843; e-mail: iivanov@cvm.tamu.edu; ORCID ID: 0000-0002-5323-8325}
}
\begin{document}
\maketitle
	
\begin{abstract}
We study a generalized discrete-time multi-type Wright-–Fisher population process.
The mean-field dynamics of the stochastic process is induced by a general replicator difference equation.
We prove several results regarding the asymptotic behavior of the model, focusing on the impact of the
mean-field dynamics on it. One of the results is a limit theorem that describes sufficient conditions
for an almost certain path to extinction, first eliminating the type which is the least fit at the mean-field equilibrium.
The effect is explained by the metastability of the stochastic system, which under the conditions of the theorem spends
almost all time before the extinction event in a neighborhood of the equilibrium.  In addition to limit theorems, we
propose a variation of Fisher's maximization principle, fundamental theorem
of natural selection, for a completely general deterministic replicator dynamics and study implications of the
deterministic maximization principle for the stochastic model.
\end{abstract}
{\em MSC2010: } Primary~92D15, 92D25; Secondary~37N25, 91A22, 60J10.\\
\noindent{\em Keywords}: generalized Wright-Fisher process, evolutionary dynamics, fitness landscape,
quasi-equilibria, metastability, Fisher's maximization principle, Lyapunov functions.
\section{Introduction}
In this paper, we study a generalized multi-type Wright-Fisher process, modeling the
evolution of a population of cells or microorganisms in discrete time. Each cell belongs to one of the several given types,
and its type evolves stochastically while the total number of cells does not change with time.
The process is a time-homogeneous Markov chain that exhibits a rich and complex behavior.
\par
Due to the apparent universality of its dynamics, availability of effective approximation schemes, and its relative amenability
to numerical simulations, the Wright-Fisher process constitutes a popular modelling tool in applied biological research.
In particular, there is a growing body of literature using the Wright-Fisher model of evolutionary game theory \cite{wfnowak}
and its variations along with other tools of game theory and evolutionary biology to study compartmental models of
complex microbial communities and general metabolic networks \cite{mgame9, mgame7, mreview, sinergy3, mb5wf, mgame3}.
For instance, recent studies \cite{mgame3} incorporate evolutionary game theory and concepts from behavioral economics into their hybrid models of metabolic
networks including microbial communities with so-called Black Queen functions. See also \cite{ad4, mgame7, ad1} for related research.
The Wright-Fisher model has been employed to study the evolution of host-associated microbial communities \cite{mb5wf}.
In a number of studies, the Wright-Fisher model was applied to examine adaptive dynamics in living organisms, for instance viral and immune populations perpetually adapting \cite{stat47} and response to T-cell-mediated immune pressure \cite{stat24, stat21} in HIV-infected patients. Another interesting application of the generalized Wright-Fisher model to the study of evolution of viruses has been previously reported \cite{stat45}.
\par
Other biological applications of the Wright-Fisher model supported by experimental data include, for instance, substitutions in protein-coding genes \cite{stat28}, analysis of the single-nucleotide polymorphism differentiation between populations \cite{stat35}, correlations between genetic diversity in the soil and above-ground population \cite{stat46}, and persistence of decease-associated gene alleles in wild populations \cite{wfbio}. In cancer-related research, the Wright-Fisher model has been used to model tumor progression \cite{wfc, wfc1},
clonal interference \cite{stat18, clonal}, cancer risk across species (Peto's paradox) \cite{petopgenes}, and intra-tumoral heterogeneity \cite{iwasa}. Not surprisingly,
the Wright-Fisher model is also extensively used to model phenomena in the theoretical evolutionary biology \cite{stat33, indirect, stat62, iwf, dnoise1, stat19, wfctunelling, stat63, wfmratchet}.
\par
We now describe the generalized Wright-Fisher model and state our results in a semi-formal setting. For a population with $N$ cells, we define $M$-dimensional vectors
\beq
\bZn_k:=\bigl(\Zn_k(1),\Zn_k(2),\ldots,\Zn_k(M)\bigr), \qquad k=0,1,2,\ldots,
\feq
where $\Zn_k(i)$ represents the number of type $i$ cells in the generation $k.$
We denote by
\beq
\bXn_k:=\bigl(\Xn_k(1),\Xn_k(2),\ldots,\Xn_k(M)\bigr)
\feq
the frequency vectors of the population. That is,
\beqn
\label{s1}
\Xn_k(i)=\frac{\Zn_k(i)}{N}=\frac{1}{N}\cdot \#\{\mbox{\rm cells of type $i$ in the $k$-th generation}\},
\feqn
for all $1\leq i\leq M.$ We refer to the frequency vector $\bXn_k$ as the \emph{profile of the population at time $k.$}
\par
With biological applications in mind, the terms ``particles" and ``cells" are used interchangeably throughout this paper.
We assume that the sequence $\bZn_k$ forms a time-homogeneous Markov chain with a Wright-Fisher frequency-dependent transition mechanism.
That is, conditioned on $\bZn_k,$ the next generation vector $\bZn_{k+1}$ has a multinomial distribution with
the profile-dependent parameter $\bGamma\big(\bXn_k\big),$ where $\bGamma$ is a vector field that shapes the fitness landscape
of the population (see Section~\ref{forma} for details). In Example~\ref{exm} we list several examples of the fitness taken
from either biological or biologically inspired mathematical literature and briefly indicate a rationale behind their introduction.
\par
It is commonly assumed in the evolutionary population biology that particles reproduce or adopt their behavior according to their fitness,
which depends on the population composition through a parameter representing utility of a random interaction within the population.
In Section~\ref{filand}, in  order  to  relate  the  stochastic dynamics  of  the  generalized  Wrighte-Fisher  model
to \textit{first biological principles}, we interpret the mean-field model in terms of the reproductive fitness of the stochastic process
and study some basic property of the fitness. Our main contribution here is a variant of Fisher's evolutionary maximization principle
(``fundamental theorem of natural selection") for a general deterministic replicator dynamics.
An informal statement of this contribution is as follows. We refer to Theorem~\ref{mdns} for a formal statement.
We refer to a vector $\bx$ in $\rr^m$ as a \textit{population profile vector} if all its components are non-negative and their sum is one (i.\,e.,
$\bx$ is a probability vector).
\begin{theoremc}
\label{ac}
Assume that $\bGamma$ is a continuous vector-field. Then there exists a ``nice" (non-decreasing, bounded, complete Lyapunov, see Definition~\ref{lyapunov})
function $h$ mapping population profiles into the set of reals $\rr,$ such that:
\item [(i)]  $h(\bx)$ is the average $\sum_i x(i)\varphi_i(\bx)$ of an ordinal
Darwinian fitness given by
\beqn
\label{ford}
\varphi_i(\bx)=\frac{\Gamma_i(\bx)}{x(i)}h(\bx),\qquad \qquad i=1,\ldots,M.
\feqn
\item [(ii)] $h\big(\bGamma(\bx)\big)\geq h(\bx)$ for any population profile $\bx.$
\item [(iii)] $h\big(\bGamma(\bx)\big)>h(\bx)$ for any population profile $\bx$ which is, loosely speaking, ``not recurrent" (see Definition~\ref{conley} for the
definition of the ``recurrence").
\end{theoremc}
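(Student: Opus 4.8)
The plan is to recognize the assertion as a manifestation of Conley's fundamental theorem of dynamical systems for the discrete-time system obtained by iterating $\bGamma$ on the space of population profiles. Since $\bGamma(\bx)$ is itself a probability vector, the map $\bGamma$ sends the profile simplex $\Delta$ into itself, so $(\Delta,\bGamma)$ is a continuous dynamical system on a compact metric space, to which Conley-type machinery applies. The notion of recurrence in (iii) is precisely the chain recurrence of Definition~\ref{conley}, so the whole statement reduces to producing a complete Lyapunov function for this system and then reading off its elementary consequences.

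First I would invoke the version of Conley's theorem valid for continuous, not necessarily invertible, self-maps of a compact metric space to obtain a continuous function $L\colon\Delta\to\rr$ that strictly decreases along orbits off the chain recurrent set, is constant on each chain-recurrent component, takes distinct values on distinct components, and whose set of critical values is nowhere dense in $\rr$. Setting $h:=-L$ yields a continuous function on the compact set $\Delta$; it is therefore bounded, and it is non-decreasing along trajectories. The inequality $h\big(\bGamma(\bx)\big)\geq h(\bx)$ is then exactly (ii), while strict monotonicity off the chain recurrent set gives $h\big(\bGamma(\bx)\big)>h(\bx)$ for non-recurrent $\bx$, which is (iii); together with the separation and nowhere-density properties, this is what Definition~\ref{lyapunov} asks of $h$ when it calls it a ``nice'' (non-decreasing, bounded, complete Lyapunov) function.

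Part (i) is then a direct computation. Defining the ordinal fitness components by \eqref{ford}, one has
\[
\sum_i x(i)\varphi_i(\bx)=h(\bx)\sum_i\Gamma_i(\bx)=h(\bx),
\]
because $\bGamma(\bx)$ is a probability vector and hence $\sum_i\Gamma_i(\bx)=1$; thus $h(\bx)$ is the population average of the $\varphi_i(\bx)$, as claimed. The only care needed is on the boundary faces of $\Delta$, where some $x(i)$ vanish and the cardinal expression $\Gamma_i(\bx)/x(i)$ degenerates; this is precisely why the fitness is called \emph{ordinal}, and one either restricts \eqref{ford} to the relative interior or uses the face-preserving structure of $\bGamma$ to extend the ratio continuously. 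The main obstacle is the first step: the classical Conley theorem is usually stated for flows or homeomorphisms, whereas $\bGamma$ need only be continuous, so I would rely on Hurley's extension to continuous maps and take care to match its chain-recurrence formalism with Definition~\ref{conley}.
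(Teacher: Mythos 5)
Your proposal matches the paper's own proof essentially step for step: the paper derives the statement (its Theorem~\ref{mdns}) directly from the discrete-time, continuous-map version of Conley's fundamental theorem (Hurley's form, the paper's Theorem~\ref{mt}), and then defines the fitness by $\varphi_i(\bx)=\frac{\Gamma_i(\bx)}{x(i)}h(\bx)$ exactly as you do, so that $\bx\cdot\bvarphi(\bx)=h(\bx)$ since $\sum_i\Gamma_i(\bx)=1$. The only cosmetic differences are that the paper builds the non-decreasing convention into its Definition~\ref{lyapunov} rather than flipping a sign, and it resolves the boundary degeneracy you flag by simply setting $\varphi_i(\bx)=0$ for $i\not\in\calc(\bx)$.
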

The theorem is a simple consequence of a ``fundamental theorem of dynamical systems" due to Conley \cite{conley}, see Section~\ref{arfa} for details.
\par
Let $\bx$ be an arbitrary population profile, set $\bpsi_0=\bx,$ and define recursively,
\beqn
\label{psi}
\bpsi_{k+1}=\bGamma(\bpsi_k),\qquad \qquad k\geq 0.
\feqn
We will occasionally use the notation $\bpsi_k(\bx)$ to indicate the initial state $\bpsi_0=\bx.$ It turns out that dynamical system $\bpsi_k$ serves as
mean-field approximation for the stochastic model (see Section~\ref{forma} and Theorem~\ref{da} for details).
It follows from Theorem~\ref{ac} that the average fitness $h$ is non-decreasing along the orbits of $\bpsi_k:$
\beq
h(\bpsi_{k+1})\geq h(\bpsi_k),
\feq
and, moreover, (again, informally) ``typically" we have a strong inequality $h(\bpsi_{k+1})>h(\bpsi_k).$
Furthermore, since $h$ is a bounded function,
\beq
\lim_{k\to\infty} h(\bpsi_k)=\sup_k h(\bpsi_k)
\feq
exists for all initial conditions $\bpsi_0.$
\par
In fact, the average Darwinian fitness generically remains constant during the evolution of any
Wright-Fisher model driven by a replicator mean-field dynamics (see Section~\ref{r} for details). Therefore,
from the perspective view of Fisher fundamental theorem, it is desirable to develop an alternative notion of fitness.
To that end, Theorem~\ref{ac} offers a distinguished ordinal utility that is compatible with the Fisher's fundamental theorem.
While the construction of this fitness is not explicit enough to be of a practical significance in applications,
the identity of this distinguished fitness is implied to be used by Nature in the way predicted
by Fisher's fundamental principle. Given that multiplication of the fitness vector-function by any scalar function does not change the
model, the fitness is an ordinal concept in our model (see Section~\ref{r} for details). This is seemingly paradoxical with respect to Theorem~\ref{ac} choosing a specific (cardinal) fitness. However, we argue that the fitness
is a more fundamental and intrinsic concept  than the evolution of phenotype described by our model, and while any ordinal fitness would yield the same update rule $\bGamma,$ there
exists a specific fitness form (perhaps more then one) that should be considered as \textit{the fitness} of the system in order to ensure that Fisher's maximization principle holds true.
\par
In contrast to the most of the known results of this type, our maximization principle 1) is genuinely universal,
such that it can be applied to any continuous replicator dynamics; and 2) refers to maximization of the entire reproduction fitness, rather than of its part.
Though our result is practically an existence theorem, namely it rather asserts the existence of a proper fitness function than offers its
explicit construction, we believe it makes an important contribution to the evolutionary population theory by establishing a direct
link (biologically meaningful and mathematically rigorous) between Wright's metaphor of fitness landscapes and Fisher's theorem of natural selection,
two of the most influential and arguably most controversial concepts of the evolutionary biology \cite{birch, fisher5,
fmoran, fisher11, elf, gavr, rugged, forr, fitn1}.
\par
In Section~\ref{exits} we study fixation probabilities of a certain class of Wright-Fisher models, induced by
a classical linear-fractional replicator mean-field dynamics with a symmetric payoff matrix which corresponds to a
partnership normal-form game with a unique evolutionary stable  equilibrium.
Understanding the route to fixation of multivariate population models is of a great importance to many branches of biology
\cite{app17, gavr, iwf, bionowak, rugged, wfmratchet}. For instance, extinction risk of biologically important components,
regulated by metabolism in living cells, has been the focus
of intensive research and is a key factor in numerous biotechnologically and biomedically relevant applications \cite{e,e1}.
Our contribution can be summarized as follows.  Firstly, Theorem~\ref{xit} offers an interpretation of the notion
``the least fit" which is in the direct accordance to the law of competitive exclusion and Fisher's fundamental theorem of natural selection.
Secondly, on a more practical note, the theorem suggests that for the underlying class of population models,
the path to extinction is almost deterministic when the population size is large. We remark that for a different
class of models, a similar fixation mechanism has been described in \cite{assaf, assaf1, metapark}.
However, our proof is based on a different idea, and is mathematically rigorous in contrast to the previous work
(where, in order to establish the result, analytical methods are combined with some heuristic arguments).
\par
Informally, the result in Theorem~\ref{xit} can be stated as follows:
\begin{theoremc}
\label{cb}
Suppose that $\bGamma$ satisfies the following set of conditions:
\begin{itemize}
\item [(i)] There is $\bmz \in\rr^M$ such that $\mz(i)>0$ for all $i=1,\ldots,M$ and
$\lim_{k\to\infty} \bpsi_k=\bmz$ whenever $\psi_0(i)>0$  for all $i=1,\ldots,M.$
\item [(ii)] There are no mutations, that is $\Gamma_i(x)=0$ if $x(i)=0.$
\item [(iii)] The Markov chain $\bXn_k$ is metastable, that is it spends a long time
confined in a small neighborhood of the deterministic equilibrium $\bmz$ before a large
(catastrophic) fluctuation occurs and one of the types gets instantly extinct.
\end{itemize}
Then, the probability that the least fit at the equilibrium $\bmz$ type will extinct first converges to one when $N$ goes to infinity.
\end{theoremc}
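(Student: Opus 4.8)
The plan is to read the identity of the ``least fit'' type off the one-generation extinction probabilities at the equilibrium, and then to show that the metastable confinement near $\bmz$ turns the first-extinction event into a race between exponentially small probabilities whose exponents separate as $N\to\infty$. Since $\bmz$ is a fixed point of the mean-field map we have $\bGamma(\bmz)=\bmz$, hence $\Gamma_i(\bmz)=\mz(i)$ for every $i$. As the next generation is multinomial with parameters $N$ and $\bGamma(\bXn_k)$, the conditional probability that type $i$ disappears in a single step is
\beq
\pp\bigl(\Zn_{k+1}(i)=0\mid \bXn_k=\bx\bigr)=\bigl(1-\Gamma_i(\bx)\bigr)^N.
\feq
At the equilibrium this equals $(1-\mz(i))^N=\exp\bigl(N\log(1-\mz(i))\bigr)$, whose exponent $-\log(1-\mz(i))$ is strictly increasing in $\mz(i)$. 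I therefore take the \emph{least fit} type to be the index $i^\ast=\arg\min_i\mz(i)$ (for simplicity assume the minimizer is unique): it is exactly the type for which a single-generation wipe-out is least unlikely. Condition~(ii) makes every face $\{x(i)=0\}$ absorbing, so extinction is irreversible and the first-extinction time $\tau:=\min_i\tau_i$, with $\tau_i:=\inf\{k\geq 0:\Xn_k(i)=0\}$, is well defined; the goal is to prove $\pp(\tau_{i^\ast}=\tau)\to 1$.

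Next I would use the metastability hypothesis~(iii) to localize the dynamics. Fix a small $\delta>0$ and let $B_\delta(\bmz)$ be the corresponding ball. By~(iii) the chain, run until $\tau$, spends all but a vanishing fraction of its time in $B_\delta(\bmz)$, and extinction occurs through a single catastrophic jump from inside $B_\delta(\bmz)$ onto a boundary face; in particular the event that the chain leaves $B_\delta(\bmz)$ without an immediate extinction may be discarded. On $B_\delta(\bmz)$ continuity of $\bGamma$ gives, uniformly in $\bx$,
\beq
\bigl(1-\Gamma_i(\bx)\bigr)^N=\exp\Bigl(N\bigl(\log(1-\mz(i))+O(\delta)\bigr)\Bigr),
\feq
so the per-step extinction probability of type $i$ retains its equilibrium exponent up to an $O(\delta)$ correction that can be made smaller than any preassigned gap between the $\mz(i)$. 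Moreover the probability that two types vanish at the same step is at most $(1-\mz(i)-\mz(j))^N$, exponentially smaller than either single-type probability, so simultaneous extinctions are negligible.

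It then remains to run the race. Writing $q_i(\bx)=(1-\Gamma_i(\bx))^N$ and using the strong Markov property together with the relaxation mechanism of~(iii) --- between successive ``attempts'' the chain returns to $B_\delta(\bmz)$ --- the successive one-step extinction opportunities behave, up to the errors above, like independent trials in which type $i$ succeeds with probability comparable to $q_i(\bx)$. Conditioning on the first step at which some type disappears, the chance that this type is $i$ is, to leading exponential order, $q_i(\bx)\big/\sum_j q_j(\bx)$. Since for every $j\neq i^\ast$
\beq
\frac{q_j(\bx)}{q_{i^\ast}(\bx)}=\Bigl(\tfrac{1-\mz(j)}{1-\mz(i^\ast)}\Bigr)^N e^{O(N\delta)}\longrightarrow 0
\feq
as soon as $\delta$ is small enough that the $O(\delta)$ term cannot close the strictly positive gap $\log\frac{1-\mz(i^\ast)}{1-\mz(j)}$, we obtain $q_{i^\ast}/\sum_j q_j\to 1$ and hence $\pp(\tau_{i^\ast}=\tau)\to 1$. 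A clean way to make this rigorous is to couple the first-extinction type with a sequence of i.i.d. categorical variables whose cell probabilities are the normalized $q_j$, letting the metastable relaxation time serve as the renewal length.

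The main obstacle is making hypothesis~(iii) quantitative and uniform. Concretely, the work lies in (a) showing that, started near $\bmz$, the chain returns to $B_\delta(\bmz)$ and re-equilibrates on a time scale negligible compared with the extinction time, so that the successive ``attempts'' genuinely decouple; (b) ruling out the \emph{gradual} extinction of a type through a run of moderate downward steps, i.e.\ confirming that the mean-field restoring drift toward $\bmz$ makes the single-jump mechanism the cheapest route to each face, equivalently that the large-deviation quasi-potential for hitting $\{x(i)=0\}$ is governed by the exponent $-\log(1-\mz(i))$ rather than by a smaller multi-step action; and (c) carrying the $O(\delta)$ errors through the renewal argument without destroying the exponential separation of the rates. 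The symmetric partnership structure of Section~\ref{exits} enters precisely here, as it supplies the strict Lyapunov function and the global attractivity of $\bmz$ underlying~(i) and~(iii); the race argument itself uses only the equilibrium masses $\mz(i)$.
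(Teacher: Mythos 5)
Your outline reproduces the core mechanism of the paper's rigorous counterpart of this statement (Theorem~\ref{xit}, proved in Section~\ref{exit}): since $\bGamma(\bmz)=\bmz$, the least fit types are $J^*=\arg\min_i\Gamma_i(\bmz)=\arg\min_i\mz(i)$; extinction is a race between the one-step wipe-out probabilities $\bigl(1-\Gamma_i(\bx)\bigr)^N$ launched from a neighborhood $\caln_\theta$ of $\bmz$; returns to that neighborhood act as renewals; and simultaneous extinctions are exponentially negligible. All of this matches the paper.

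The genuine gap is that the three items you defer as ``the work lies in (a), (b), (c)'' are not technicalities to be filled in later --- they are the entire mathematical content, and one of them cannot be extracted from hypothesis~(iii) at all. First, your item (a) (fast return to the neighborhood) is something the paper \emph{proves} rather than assumes: Proposition~\ref{key}, a uniform $\veps$-chain (shadowing) property that follows from asymptotic stability of $\bmz$ under Assumption~\ref{assume7} (where $\bGamma$ is a Losert--Akin diffeomorphism), combined with Hoeffding's inequality, gives that from every point of the region $K_{\theta,\eta}$ of \eqref{etad} the chain re-enters $\caln_\theta$ within a fixed number $T$ of steps except with probability $2MTe^{-N\veps_\theta^2/2}$ (estimate \eqref{g}). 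Second, and more seriously, your item (b) (the single giant jump from near $\bmz$ is the cheapest route to each face) is \emph{never derived} from metastability in the paper: it is hard-wired into the hypotheses as the threshold conditions \eqref{hc} and \eqref{eta1}, which say precisely that the one-jump exponent $(1-\alpha-\theta)^N$ dominates both the failure-to-return probability $\sim e^{-N\veps_\theta^2/2}$ and the probability of gradually depleting a fit type below level $\eta$ (the second term in \eqref{239a}). Your plan to deduce this from the qualitative statement~(iii) cannot work: ``spends almost all time near $\bmz$'' controls the occupation measure, not the location from which the fatal jump is launched, and rare excursions biased toward a wrong face are perfectly compatible with (iii) as stated; some quantitative assumption of the type \eqref{hc}/\eqref{eta1} is unavoidable. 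Finally, a structural remark: the i.i.d.\ categorical coupling and the decoupling of ``attempts'' that worry you in (a) are unnecessary. The paper's bookkeeping uses only the strong Markov property at the embedded return times: it lower-bounds the good event by $(1-\alpha-\theta)^N\bigl(1+E_\bx(L_N)\bigr)$ in \eqref{239}, decomposes the almost-sure event of hitting the boundary into good-jump, wrong-face, and failure-to-return terms in \eqref{239a}, and compares exponents --- no independence between successive attempts is ever established or needed.
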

\textit{Metastability} or \textit{stable coexistence} is described in \cite{negac} as ``the long-term persistence of multiple competing species,
without any species being competitively excluded by the others, and with each species able to recover from perturbation to low density."
Various mathematical conditions equivalent to a variant of the above concept can be found in \cite{leto,bovier,landim}, see also \cite{assaf}.
\par
The benefit of the kind of results stated in Theorem~\ref{cb} for applied biological research is that they imply that the ultimate outcome of
a complex stochastic evolution is practically deterministic, and thus can be robustly predicted in a general accordance with
a simple variation of the \textit{competitive exclusion principle}.
\par
In Section~\ref{exits}, in addition to our main result stated in Theorem~\ref{xit}, we prove an auxiliary Proposition~\ref{radius} which complements
 \cite{losakin} and explains the mechanism of  convergence to the equilibrium under the conditions of Theorem~\ref{xit}.
The proposition establishes a strong contraction near the equilibrium property for the systems considered in Theorem~\ref{xit}.
The claim is a ``reverse engineering" in the sense that it would immediately imply Theorem~\ref{cb}-(i), whereas the latter has been
obtained in \cite{losakin} in a different way and is used here to prove the claim in Proposition~\ref{radius}.
\par
The Wright-Fisher Markov chains are known to exhibit complex multi-scale dynamics, partially because of the non-smooth boundary of their natural phase space. The analytic intractability of Wright-Fisher models has motivated an extensive literature on the approximation of these stochastic processes \cite{scale, ewens, fitn, iwf, app1}. These approximations ``illuminate the evolutionary process by focusing attention on the appropriate scales for the parameters, random variables, and time and by exhibiting the parameter combinations on which the limiting process depends" \cite{app1}. Our results in Section~\ref{exits} concerning the model's path to extinction rely partially on the Gaussian approximation constructed in Theorem~\ref{da}
and error estimates proved in Section~\ref{dea}. The construction is fairly standard and has been used to study other stochastic population models.
Our results in Sections~\ref{infi} and~\ref{dea} can be informally summarized as follows:
\begin{theoremc}
Assuming that the vector field $\bGamma$ is smooth enough, the following is true:
\item [(i)] $\bXn_k=\bpsi_k+\frac{\bun_k}{\sqrt{N}},$ where $\bpsi_k$ is the deterministic mean-field sequence defined in \eqref{psi}
(in particular, independent of $N$) and $\bun_k$ is a random sequence that, as $N$ goes to infinity, converges in distribution to a solution of certain random linear recursion.
\item [(ii)] For a fixed $N$ and any given $\veps>0,$ with an overwhelmingly large probability, the norm of the noise perturbation vector $\frac{\bun_k}{\sqrt{N}}$ will not exceed $\veps$
for an order of $e^{\alpha \veps^2 N}$ first steps of the Markov chain, where $\alpha>0$ is a constant independent of $N$ and $\veps.$
\end{theoremc}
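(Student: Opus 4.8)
The plan is to extract everything from the one–step conditional law of the chain. Conditioned on $\calf_k:=\sigma(\bXn_0,\dots,\bXn_k)$, the vector $N\bXn_{k+1}$ is multinomial with parameters $N$ and $\bGamma(\bXn_k)$, so that
\[
\ee\bigl[\bXn_{k+1}\mid\calf_k\bigr]=\bGamma(\bXn_k),\qquad
\mathrm{Cov}\bigl(\bXn_{k+1}\mid\calf_k\bigr)=\tfrac1N\,\bSigma\bigl(\bGamma(\bXn_k)\bigr),
\]
where $\bSigma(\mathbf{p})=\mathrm{diag}(\mathbf{p})-\mathbf{p}\,\mathbf{p}^{\top}$ is the multinomial covariance. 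Writing $\bXn_{k+1}=\bGamma(\bXn_k)+\tfrac1{\sqrt N}\bM^{(N)}_{k+1}$ defines a martingale–difference array $\bM^{(N)}_{k+1}$ with conditional covariance $\bSigma(\bGamma(\bXn_k))$. Subtracting $\bpsi_{k+1}=\bGamma(\bpsi_k)$ and setting $\bun_k:=\sqrt N(\bXn_k-\bpsi_k)$, a first–order Taylor expansion of the (twice continuously differentiable) field $\bGamma$ around $\bpsi_k$ gives
\[
\bun_{k+1}=D\bGamma(\bpsi_k)\,\bun_k+\bM^{(N)}_{k+1}+\tfrac1{\sqrt N}\,\bR^{(N)}_k,\qquad |\bR^{(N)}_k|\le C\,|\bun_k|^2,
\]
which already displays the advertised affine structure. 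The natural candidate for the limit is the Gaussian linear recursion $\bu_{k+1}=D\bGamma(\bpsi_k)\,\bu_k+\bxi_{k+1}$, with the $\bxi_{k+1}$ independent, centred, and of covariance $\bSigma(\bpsi_{k+1})$.

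To establish part~(i) rigorously I would induct on $k$, the base case $\bun_0=\bo$ being trivial. Assuming $\bun_k\Ra\bu_k$ together with uniform $L^p$ bounds on $\bun_k$ (available because the increments are bounded), tightness of $\bun_{k+1}$ and $\tfrac1{\sqrt N}\bR^{(N)}_k\to\bo$ in probability follow from the moment estimate $\ee|\bun_{k+1}|^2\le\|D\bGamma\|^2\,\ee|\bun_k|^2+\mathrm{tr}\,\bSigma+o(1)$. Convergence in law I would obtain via conditional characteristic functions: conditioning on $\calf_k$,
\[
\ee\bigl[e^{i\langle t,\bun_{k+1}\rangle}\bigr]
=\ee\Bigl[e^{\,i\langle t,\,D\bGamma(\bpsi_k)\bun_k+\frac1{\sqrt N}\bR^{(N)}_k\rangle}\;
\ee\bigl[e^{i\langle t,\bM^{(N)}_{k+1}\rangle}\mid\calf_k\bigr]\Bigr],
\]
and the inner factor converges, uniformly on compacta and on the high–probability event $\{|\bun_k|\le K\}$, to $\exp\!\bigl(-\tfrac12\langle t,\bSigma(\bpsi_{k+1})t\rangle\bigr)$ by the multinomial central limit theorem. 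Feeding the induction hypothesis into the outer factor identifies the limit with the characteristic function of $\bu_{k+1}$, closing the induction and yielding joint convergence of all finite–dimensional marginals.

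For part~(ii) I would fix $N$ and specialize to the metastable regime, namely an orbit confined to a neighbourhood of the stable equilibrium $\bmz$ (in particular $\bpsi_k\equiv\bmz$ when $\bpsi_0=\bmz$). Two ingredients drive the estimate. First, since each coordinate of $N\bXn_{k+1}$ is a sum of $N$ independent indicators, Hoeffding's inequality gives the sub–Gaussian one–step bound $\pp\bigl(|\bM^{(N)}_{k+1}/\sqrt N|>\delta\mid\calf_k\bigr)\le C\,e^{-cN\delta^2}$. Second, because the spectral radius of $D\bGamma(\bmz)$ is strictly below one, there is a norm with $\|D\bGamma\|\le\rho<1$ on a neighbourhood of $\bmz$ (cf.\ Proposition~\ref{radius}), so that $|\bGamma(\bx)-\bGamma(\bmz)|\le\rho\,|\bx-\bmz|$ there. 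Combining the decomposition $\bXn_{k+1}-\bmz=\bigl(\bGamma(\bXn_k)-\bGamma(\bmz)\bigr)+\tfrac1{\sqrt N}\bM^{(N)}_{k+1}$ with these two facts yields $|\bXn_{k+1}-\bmz|\le\rho\,|\bXn_k-\bmz|+|\bM^{(N)}_{k+1}/\sqrt N|$; hence on the event $G_T=\{\,|\bM^{(N)}_{j}/\sqrt N|\le(1-\rho)\veps\ \text{for all}\ j\le T\,\}$ the perturbation never leaves the ball of radius $\veps$. A union bound gives $\pp(G_T^{\,c})\le C\,T\,e^{-c(1-\rho)^2N\veps^2}$, so choosing $T=e^{\alpha\veps^2N}$ with any $\alpha<c(1-\rho)^2$ bounds this by $C\,e^{-(c(1-\rho)^2-\alpha)\veps^2N}$, overwhelmingly small in $N$.

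The main obstacle I anticipate lies in part~(ii): the contraction $|\bGamma(\bx)-\bGamma(\bmz)|\le\rho|\bx-\bmz|$ must hold uniformly over the entire $\veps$–ball rather than only infinitesimally at $\bmz$, which is precisely what Proposition~\ref{radius} is designed to supply, and one must check that this neighbourhood is compatible with the no–mutation boundary $\{x(i)=0\}$ so that the linearized contraction is not spoiled near the faces of the simplex. A secondary technical point, in part~(i), is the uniformity of the conditional multinomial central limit theorem over the random base point $\bXn_k$; I would handle it by working on the event $\{|\bun_k|\le K\}$ furnished by tightness and controlling the complementary contribution through the second–moment bound.
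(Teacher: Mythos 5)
Your proposal is correct, and for part (i) it is essentially the paper's own argument: the proof of Theorem~\ref{da} uses exactly your decomposition, namely the one-step multinomial noise term $\frac{1}{\sqrt{N}}\sum_{i=1}^N\big({\bm \xi}_{k,i}^{(\bXn_k)}-\bGamma(\bXn_k)\big)$ plus the Taylor linearization $\sqrt{N}\big(\bGamma(\bXn_k)-\bpsi_{k+1}\big)=\bD_x(\bpsi_k)\bun_k+o(1)$, handling the noise term by a cited multivariate CLT and part (a) by Karr's weak-convergence theorem, where you instead run an explicit characteristic-function induction; that is a difference of bookkeeping, not of route. Part (ii) rests on the same probabilistic engine as the paper (conditional Hoeffding bounds for the multinomial increments, a contraction to stop errors from compounding, and a union/product bound over exponentially many steps), but your framing differs in a way worth flagging. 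The paper's rigorous counterparts, Theorems~\ref{th1} and~\ref{th3}, take Lipschitz continuity of $\bGamma$ with constant $\rho$ (globally, or on a closed set $\cale$) as the hypothesis and control the deviation from the \emph{moving} orbit $\bpsi_k$ via the telescoped bound $\|\bXn_k-\bpsi_k\|\le\sum_{m=0}^{k-1}\rho^m\|\bXn_{k-m}-\bGamma(\bXn_{k-m-1})\|$; this yields a meaningful (if not exponentially long) estimate even when $\rho\ge 1$, and yields the $e^{\alpha\veps^2N}$ decoupling time for an arbitrary starting point in $\cale$ when $\rho<1$. You instead anchor the analysis at the equilibrium, setting $\bpsi_k\equiv\bmz$, and source the contraction from Proposition~\ref{radius}. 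That makes the connection to the metastability picture of Section~\ref{exits} transparent, but it costs generality twice: Proposition~\ref{radius} is proved only under Assumption~\ref{assume7} (symmetric positive payoff matrix, negative definiteness on $W_M$, interior equilibrium), which is considerably more than the smoothness the statement assumes, and your estimate covers only trajectories started at (or after reaching) a neighborhood of $\bmz$, whereas the statement concerns the noise $\bXn_k-\bpsi_k$ along an arbitrary mean-field orbit. The repair is exactly the paper's formulation: replace $\bmz$ by $\bpsi_k$ in your recursion and assume the Lipschitz contraction on a set containing the orbit's $\veps$-neighbourhood, at which point your argument and the proof of Theorem~\ref{th3} coincide.
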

\par
The theorem implies that when the population size $N$ is large, the stochastic Wright-Fisher model
can be interpreted as a perturbation of the mean-field dynamical system $\bGamma$ with a small random noise.
\par
Due to the particular geometry of the phase space boundary, the stochastic dynamics of the Wright-Fisher model often exhibits structural patterns (specifically, invariant and quasi-invariant sets) inherited from the mean-field model. Appendix~A of this paper is devoted to the exploration of the peculiar connection between invariant sets of the mean-field dynamical system and the form of the equilibria of the stochastic model when mutation is allowed. The results in the appendix aim to contribute to the general theory of Wright-Fisher processes with mutations and are not used directly anywhere in the paper.
\par
The rest of the paper is organized as follows. The mathematical model is formally introduced in Section \ref{modell}. Section~\ref{filand} is concerned with the geometry of the fitness landscape and fitness maximization principle for the deterministic mean-field model.
Our main results for the stochastic dynamics are stated and discussed in Section \ref{mainr}, with proofs deferred to Section~\ref{proofs}. Appendix~A ties the geometric properties of the fitness to the form of stochastic equilibria of the Wright-Fisher process. In order to facilitate the analysis of stochastic persistence in the Wright-Fisher model considered in Section~\ref{mainr},  we survey in Appendix~B a mathematical theory of coexistence for a class of deterministic interacting species models, known as permanent (or uniform persistent)
dynamical systems.  Finally, conclusions are outlined in Section~\ref{conclude}.
\section{The model}
\label{modell}
The aim of this section is to formally present the generalized Wright-Fisher model and introduce some of the key
notions pertinent to the generalized framework. The section is divided into two subsections. Section~\ref{geom} introduces a broader geometric context and thus sets the stage for a formal definition of the model given in Section~\ref{forma}.
\subsection{Geometric setup, notation}
\label{geom}
Fix $M\in\nn$ and define $S_M=\{1,2,...,M\}.$ Let $\zz_+$ and $\rr_+$ denote, respectively, the set of non-negative integers and the set of non-negative reals. Given a vector $\bx\in\rr^M,$ we usually use the notation $x(i)$ to denote its $i$-th coordinate. The only exception is in the context of vector fields, whose $i$-th component will typically be indicated with the subscript $i$.
\par
Let $\be_i=(0,\ldots,0,1,0,\ldots,0)$ denote the unit vector in the direction of the $i$-th axis in $\rr^M.$ We define the $(M-1)$-dimensional simplex
\beq
\Delta_M=\Big\{\bx=\big(x(1),\ldots,x(M)\big)\in\rr_+^M\,:\,\sum_{i\in S_M} x(i)=1\Big\}
\feq
and denote by $V_M$ the set of its vertices:
\beqn
\label{vm}
V_M=\{\be_j: j\in S_M\}\subset \rr^M.
\feqn
$\Delta_M$ is a natural state space for the entire collection of stochastic processes $(\bXn)_{N\in\nn}$ of \eqref{s1}. We equip $\Delta_M$ with the usual Euclidean topology inherited from $\rr^M.$ For an arbitrary closed subset $\Delta$ of $\Delta_M,$ we denote by $\mbox{\rm bd}(\Delta)$ its topological boundary and by $\mbox{\rm Int}(\Delta)$ its
topological interior $\Delta\backslash\mbox{\rm bd}(\Delta).$ For any non-empty subset $J$ of $S_M$ we denote by $\Delta_{[J]}$ the simplex built on the vertices corresponding to
$J.$ That is,
\beq
\Delta_{[J]}=\Big\{\bx\in\rr_+^M\,:\,\sum_{i\in J} x(i)=1\Big\} \subset \mbox{\rm bd}(\Delta_M).
\feq
We denote by $\partial(\Delta_{[J]})$ the manifold boundary of $\Delta_{[J]}$ and by $\Delta_{[J]}^\circ$ the corresponding
interior $\Delta_{[J]}\backslash\partial(\Delta_{[J]}).$ That is,
\beq
\partial(\Delta_{[J]})=\big\{\bx\in \Delta_{[J]}:x(j)=0~\mbox{\rm for some}~j\in J\big\}
\feq
and
\beq
\Delta_{[J]}^\circ=\big\{\bx\in \Delta_{[J]}:x(j)>0~\mbox{\rm for all}~j\in J\big\}.
\feq
We write $\partial(\Delta_M)$ and $\Delta_M^\circ$ for, respectively, $\partial(\Delta_{[S_M]})$ and $\Delta_{[S_M]}^\circ.$ Note that
$\mbox{\rm  bd}(\Delta_{[J]})=\Delta_{[J]},$ and hence $\partial(\Delta_{[J]})\neq \mbox{\rm bd}(\Delta_{[J]})$ for any \emph{proper} subset $J$ of $S_M.$
\par
Furthermore, let
\beq
\dmn=\Bigl\{0,\frac{1}{N},\frac{2}{N},...,\frac{N-1}{N},1\Bigr\}^M.
\feq
Note that $\bXn_k\in\dmn$ for all $k\in\zz_+$ and $N\in\nn.$
For any set of indices $J\subset S_M$ we write
\beqn
\label{sjn}
\djn=\Delta_{[J]}\bigcap \dmn,\quad \partial(\djn)=\partial(\Delta_{[J]})\bigcap \dmn, \quad \djn^\circ=\Delta_{[J]}^\circ\bigcap \dmn.
\feqn
We simplify the notations $\partial(\Delta_{[S_M],N})$ and $\Delta_{[S_M],N}^\circ$ to
$\partial(\dmn)$ and $\dmn^\circ,$ respectively.
\par
We use the maximum ($L^\infty$) norms for vectors in $\Delta_M$ and functions in $C(\Delta_M),$ the space of real-valued continuous functions on $\Delta_M:$
\beq
\|\bx\|:=\max_{i\in S_M}|x(i)|~\,\mbox{\rm for}~\bx\in \Delta_M
\qquad \rnd\qquad
\|f\|:=\max_{\bx\in \Delta_M}|f(\bx)|~\,\mbox{\rm for}~f\in C(\Delta_M).
\feq
The Hadamard (element-wise) product of two vectors $\bu,\bv\in\rr^M$ is denoted by $\bu\circ \bv.$ That is, $\bu\circ \bv \in\rr^M$ and
\beqn
\label{ep}
(u\circ v)(i)=u(i)v(i)\qquad\qquad \forall\, i\in S_M.
\feqn
The usual dot-notation is used to denote the regular scalar product of two vectors in $\rr^M$.
\subsection{Formulation of the model}
\label{forma}
We now proceed with the formal definition of the Markov chain $\bXn_k.$ Let $P$ denote the underlying probability measure defined in some large probability space that includes all random variables considered in this paper. We use $E$ and $COV$ to denote the corresponding expectation and covariance operators. For a random vector $\bX=(X(1),X(2),\ldots,X(M))\in\rr^M,$ we set $E(\bX):=\big(E\big(X(1)\big),E\big(X(2)\big),\ldots,E\big(X(M)\big)\big).$
\par
Let $\bGamma:\Delta_M\to \Delta_M$ be a mapping of the probability simplex $\Delta_M$ into itself. Suppose that the  population profile at time $k\in\zz_+,$
is given by a vector $\bx=\bigl(x(1),\ldots,x(M)\bigr)\in\dmn,$ i.e.  $\bXn_k=\bx.$  Then the next generation's profile, $\bXn_{k+1}$,
is determined by a multinomial trial where the probability of producing an $i$-th type particle is equal to $\bGamma_i (\bx).$
In other words, $\bXn$ is a Markov chain whose transition probability matrix is defined as
\beqn
\label{model}
P\bigg(\bXn_{k+1}=\Bigl(\frac{j_1}{N},...,\frac{j_M}{N}\Bigr)\,\bigg|\,\bXn_k=\bx\bigg)=\frac{N!}{\prod\limits_{i=1}^M j_i !}
~~\prod\limits_{i=1}^M\Gamma_i(\bx)^{j_i}
\feqn
for all $k\in\zz_+$ and $\Bigl(\frac{j_1}{N},...,\frac{j_M}{N}\Bigr)\in \Delta_{M,N}.$  Note that for any state $\bx\in \dmn$ and time $k\in \zz_+$ we have
\beqn
\label{eq1}
E\big(\bXn_{k+1}\,\big|\,\bXn_k=\bx\big)=\bGamma(\bx)
\feqn
and
\beqn
\label{var1}
COV\big(\Xn_{k+1}(i)\Xn_{k+1}(j)\,\big|\,\Xn_k=x\big)=\left\{
\begin{array}{ll}
-\frac{1}{N}\Gamma_i(\bx)\Gamma_j(\bx)&\mbox{\rm if}~i\neq j,
\\
[4pt]
\frac{1}{N}\Gamma_i(\bx)\bigl(1-\Gamma_i(\bx)\bigr)&\mbox{\rm if}~i=j.
\end{array}
\right.
\feqn
Extending the terminology of \cite{app17} from a binomial to multinomial transition kernel, we will refer to
any Markov chain $\bXn$ on $\dmn$ that satisfies \eqref{model} as
a \textit{Generalized Wright-Fisher model associated with the update rule $\bGamma.$}
\par
We denote transition kernel of $\bXn$ on $\dmn$ by $P_N.$ That is, $P_N(\bx,\by)$ is the conditional probability in \eqref{model} with $y(i)=j_i/N.$
For $k\in\nn,$ we denote by $P_N^k$ the $k$-th iteration of the kernel. Thus, $$P_N^k(\bx,\by)=P\big(\bXn_{m+k}=\by\,\big|\,\bXn_m=\bx\big)$$
for any $m\in\zz_+.$
\par
The generalized Wright-Fisher model is completely characterized by the choice of the parameters $M,N$ and
the update rule $\bGamma(\bx).$ Theorem~\ref{da} below states that when $N$ approaches infinity while $M$ and $\bGamma$ remain fixed, the update
rule emerges as the model's mean-field map, in that the path of the Markov chain $\bXn$ with $\bXn_0=\bx$ converges to the orbit of the (deterministic) discrete-time dynamical system
$\bpsi_k$ introduced in \eqref{psi}. Thus, for the large population size $N,$ the Wright-Fisher process can be perceived as a small-noise perturbation of the deterministic
dynamical system $\bpsi_k.$
\par
Throughout the paper we assume that the mean-field dynamics is induced by a replicator equation
\beqn
\label{replica}
\Gamma_i(\bx)=\frac{x(i)\varphi_i(\bx)}{\sum_{j\in S_M}x(j)\varphi_j (\bx)}\qquad \qquad  \forall\, i\in S_M,
\feqn
where the vector-field $\bvarphi:\Delta_M\to\rr_+^M$ serves as a fitness landscape of the model (see Section~\ref{filand} for details). For biological motivations of this definition, see \cite{ctao, rewhs, zig}. The replicator dynamics can be viewed as a natural discretization
scheme for its continuous-time counterpart described in the form of a differential equation \cite{garay, HS}. The normalization factor $\sum_{j\in S_m} x(j) \varphi_j(\bx)$ in \eqref{replica} is the total population fitness and plays a role similar to that of the partition function in statistical mechanics \cite{statph1}.
\begin{example}
\label{exm}
\item [(i)] One standard choice of the fitness $\bvarphi$ in the replicator equation \eqref{replica} is
\cite{hoffa, HS, iwf, wfnowak}:
\beqn
\label{lf4}
\bvarphi (\bx)=(1-\omega)\bb+\omega \bA \bx,
\feqn
where $\bA=(A_{ij})_{i,j\in S_M}$ is an $M\times M$ payoff matrix,
$\bb$ is a constant $M$-vector, and $\omega\in (0,1)$ is a selection parameter. The evolutionary game theory
interprets $(Ax)(i)=\sum_{j\in S_M}A_{ij}x(j)$ as the expected payoff of a type $i$ particle in a single interaction (game) with a
particle randomly chosen from the population profile $\bx$ when the utility of the interaction with a type $j$ particle is $A_{ij}.$
The constant vector $\bb$ represents a baseline fitness and the selection parameter $w$ is a proxy for modeling the strength
of the effect of interactions on the evolution of the population compared to inheritance \cite{wfnowak}.
\item [(ii)] In replicator models associated with multiplayer games \cite{nlinear, multip},
the pairwise interaction of case (i) (two-person game) is replaced by a more complex interaction within a randomly
selected group (multiplayer game). In this case, the fitness $\bvarphi$ is typically a (multivariate) polynomial of $\bx$ of a degree higher than one.
\item  [(iii)] An evolutionary game dynamics with non-uniform interaction rates has been considered in \cite{taylor}.
Mathematically, their assumptions amount to replacing \eqref{lf4} with
\beq
\varphi_i(\bx)=(1-\omega)b(i)+\omega \frac{Bx(i)}{Rx(i)},
\feq
where $\bR$ is the matrix of rates reflecting the frequency of pair-wise interaction between different types, and $\bB=\bA\circ \bR,$ the Hadamard product
of the payoff matrix $\bA$ and $\bR.$
\item [(iv)] Negative frequency dependence of the fitness can be captured by, for instance, allowing matrix $\bA$ in \eqref{lf4} to have negative entries \cite{negac}.
\item [(v)] A common choice of a non-linear fitness is an exponential function \cite{HS, econegt}:
\beqn
\label{ef4}
\varphi_i(\bx)=e^{\beta Ax(i)},
\feqn
where, as before, $\bA$ is a payoff matrix, and $\beta>0$ is a ``cooling" parameter (analogue of the inverse temperature in statistical mechanics
\cite{statph1}).
\end{example}
\section{Mean-field model: fitness landscape}
\label{filand}
In Section~\ref{r} we relate the update rule $\Gamma$ to a family of fitness landscapes of the mean-field model in the absence of mutations.
All the fitness functions in the family yield the same \textit{relative fitness} and differ from each other only by a scalar normalization function.
Section~\ref{arfa} contains a version of Fisher's maximization principle for the deterministic mean-field model. In Section~\ref{mutants}, we incorporate mutations into the framework presented in Sections~\ref{r} and~\ref{arfa}.
\par
The main result of this section is Theorem~\ref{mdns}, a variation of Fisher's fundamental theorem for a general deterministic replicator dynamics.
The theorem asserts the existence of a non-constant reproductive fitness which is maximized during the evolutionary process. The result is an immediate consequence of a version of Conley's ``fundamental theorem of dynamical systems" \cite{conley}, a celebrated mathematical result that appears to be underutilized in biological applications. The maximization principle stated in Theorem~\ref{mdns} applies to any continuous replicator dynamics, and it establishes a direct link between Wright's metaphor of fitness landscapes and Fisher's theorem of natural selection.
\subsection{Reproductive fitness and replicator dynamics}
\label{r}
The main goal of this section is to introduce a notion of fitness suitable for supporting a formalization of Fisher's fitness maximization principle
stated below in Section~\ref{arfa} as Theorem~\ref{mdns}. Proposition~\ref{agree} provides a useful characterization of the fitness, while Theorem~\ref{gt} (which is a citation from \cite{garay}) addresses some of its more technical properties, all related to the characterization of the support of the fitness function.
\par
Let $\calc:\Delta_M\to 2^{S_M}$ be the \textit{composition set-function} defined as
\beqn
\label{calf}
\calc(\bx)=\{j\in S_M: x(j)>0\}=\mbox{\rm ``support of vector $\bx$"}.
\feqn
For instance, $C\big((0,1/2,1/3,1/6,0)\big)=\{2,3,4\}$ and  $C\big((0,0,0,1/2,1/2)\big)=\{4,5\}.$
\par
According to \eqref{eq1}, for a population profile $\bx\in \dmn,$ the $M$-dimensional vector $\bff(\bx)$ with $i$-th component equal to
\beqn
\label{fdar}
f_i(\bx)=\frac{\Gamma_i(\bx)}{x(i)}\qquad \mbox{\rm if} \quad i\in \calc(\bx)
\feqn
represents the \emph{Darwinian fitness} of the profile $\bx$ \cite{fitn, forr}, see also \cite{fmoran, fitn3}. We will adopt the terminology of \cite{kimurac,fitco} and call a vector field $\bvarphi: \Delta_M\to \rr_+^M$ a \textit{reproductive fitness} of the population if for all $\bx\in \Delta_M$ we have:
\beqn
\label{fit}
\sum_{j\in \calc(\bx)}x(j)\varphi_j (\bx)>0 \qquad \rnd \qquad
\Gamma_i(\bx)=\frac{x(i)\varphi_i(\bx)}{\sum_{j\in S_M}x(j)\varphi_j (\bx)}\quad  \forall\, i\in \calc(\bx).
\feqn
Notice that even though we assume that both $\bvarphi(\bx)$ and $\bff(\bx)$ are vectors in $\rr_+^M,$ the definitions \eqref{fdar} and \eqref{fit} only consider $j\in \calc(\bx),$
imposing no restriction on the rest of the vectors components.
\par
Both concepts of fitness are pertinent to the amount of reproductive success of particles of a certain type per capita of the population. We remark that in \cite{pgoods},
$\bvarphi$ is referred to as the \textit{reproductive capacity} rather than the reproductive fitness.
A similar definition of reproductive fitness, but with the particular normalization $\bx\cdot \bvarphi=1$ and the restriction
of the domain of $\bGamma$ to the interior $\Delta_M^\circ,$ is given in \cite[p.~160]{econegt}.
\par
In the following straightforward extension of Lemma~1 in \cite{kimurac}, we show that two notions of fitness essentially agree under the
following no-mutations condition:
\beqn
\label{nom}
\sum_{j\in \calc(\bx)} \Gamma_j(\bx)=1 \qquad \mbox{\rm or, equivalently,}\qquad \bGamma(\bx)\in \Delta_{[\calc(\bx)]}\quad \forall~\bx\in \Delta_M.
\feqn
\begin{proposition}
\label{agree}
The following is true for any given $\bGamma:\Delta_M\to\Delta_M:$
\item [(i)] A reproductive fitness exists if and only if condition \eqref{nom} is satisfied.
\item [(ii)] Suppose that condition \eqref{nom} is satisfied and let $h: \Delta_M\to \rr_+$ be an arbitrary non-negative
function defined on the simplex. Then any $\bvarphi:\Delta_M\to \rr_+^M$ such that
\beqn
\label{sex}
\varphi_i(\bx)=\frac{\Gamma_i(\bx)}{x(i)}h(\bx)\qquad \forall\, i\in \calc(\bx),
\feqn
is a reproductive fitness.
\item [(iii)] Conversely, if $\bvarphi: \Delta_M\to \rr_+^M$ is a reproductive fitness, then
\eqref{sex} holds for $h(\bx)=\bx\cdot \bvarphi(\bx).$
\item [(iv)] If $\bvarphi:\Delta_M\to\rr_+^M$ is a reproductive fitness, $\bx\in \Delta_M$ and $i\in \calc(\bx),$
then $\varphi_i(\bx)>0$ if and only if $\Gamma_i(\bx)>0.$
\end{proposition}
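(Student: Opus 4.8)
The plan is to reduce all four claims to a single bookkeeping identity for the normalizing denominator
\[
D(\bx):=\sum_{j\in S_M}x(j)\varphi_j(\bx)=\sum_{j\in\calc(\bx)}x(j)\varphi_j(\bx),
\]
where the second equality holds because $x(j)=0$ for every $j\notin\calc(\bx).$ Everything then follows by feeding this expression through the no-mutation condition \eqref{nom}, which states precisely that $\sum_{j\in\calc(\bx)}\Gamma_j(\bx)=1.$ I would establish parts (ii) and (iii) first, since the existence half of (i) is a special case of (ii).

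For part (ii), substitute the ansatz $\varphi_i(\bx)=\frac{\Gamma_i(\bx)}{x(i)}h(\bx)$ (for $i\in\calc(\bx)$) into $D(\bx).$ The factors $x(i)$ cancel and \eqref{nom} collapses the sum:
\[
D(\bx)=\sum_{j\in\calc(\bx)}x(j)\cdot\frac{\Gamma_j(\bx)}{x(j)}h(\bx)=h(\bx)\sum_{j\in\calc(\bx)}\Gamma_j(\bx)=h(\bx).
\]
Hence the positivity requirement in \eqref{fit} is equivalent to $h(\bx)>0,$ and for $i\in\calc(\bx)$ the replicator quotient telescopes to $\frac{x(i)\varphi_i(\bx)}{D(\bx)}=\frac{\Gamma_i(\bx)h(\bx)}{h(\bx)}=\Gamma_i(\bx),$ which is exactly \eqref{fit}. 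Part (iii) runs the same computation in reverse: given any reproductive fitness, its defining identity \eqref{fit} reads $\Gamma_i(\bx)=x(i)\varphi_i(\bx)/D(\bx),$ so solving for $\varphi_i(\bx)$ yields $\varphi_i(\bx)=\frac{\Gamma_i(\bx)}{x(i)}D(\bx),$ and since $D(\bx)=\bx\cdot\bvarphi(\bx)$ this is precisely \eqref{sex} with $h(\bx)=\bx\cdot\bvarphi(\bx).$

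For part (i), the backward implication is immediate from (ii): assuming \eqref{nom}, take $h\equiv 1$ and extend $\bvarphi$ by zero off the support to exhibit a reproductive fitness. For the forward implication, suppose a reproductive fitness exists and sum the identity $\Gamma_i(\bx)=x(i)\varphi_i(\bx)/D(\bx)$ over $i\in\calc(\bx);$ the numerator sums to $D(\bx),$ giving $\sum_{i\in\calc(\bx)}\Gamma_i(\bx)=1,$ which is \eqref{nom}. Finally, part (iv) is read off the same identity: for $i\in\calc(\bx)$ one has $x(i)>0$ and $D(\bx)>0,$ so $\Gamma_i(\bx)$ and $\varphi_i(\bx)$ vanish or are positive together.

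No step presents a genuine analytic obstacle — the proposition is essentially a manipulation of the definitions \eqref{fdar}, \eqref{fit} and \eqref{sex}. The only points demanding care are notational: keeping track that sums over $S_M$ and over $\calc(\bx)$ coincide (because $x(j)=0$ off the support), remembering that \eqref{fdar} and \eqref{sex} constrain only the on-support components of the vector fields, and noticing that the positivity clause of \eqref{fit} forces the scaling function $h$ in part (ii) to be taken strictly positive rather than merely non-negative (the natural choice $h(\bx)=\bx\cdot\bvarphi(\bx)$ supplied by part (iii) being automatically positive).
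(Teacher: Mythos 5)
Your proof is correct and is essentially the argument the paper intends: the paper states Proposition~\ref{agree} without proof, deferring to Lemma~1 of \cite{kimurac}, but its proof of the mutation analogue in Section~\ref{mutants} --- computing the normalization $\bx^T\bTheta\bvarphi(\bx)=h(\bx)$ for the ``if'' part and solving \eqref{mfit} for $\bvarphi$ with $h(\bx)=\bx^T\bTheta\bvarphi(\bx)$ for the ``only if'' part --- is exactly your computation with $D(\bx)$ specialized to $\bTheta$ equal to the identity. Your side remark that the positivity clause in \eqref{fit} forces the function $h$ in part (ii) to be strictly positive (not merely non-negative, as the statement literally allows) is a correct and worthwhile refinement.
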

Note that \eqref{nom} is equivalent to the condition that $\bGamma\big(\Delta_{[I]}^\circ\big)\subset \Delta_{[I]}$ for all $I\subset S_M.$
Therefore, if $\bGamma$ is a continuous map, then \eqref{nom} is equivalent to the condition
\beq
\bGamma\big(\Delta_{[I]}\big)\subset \Delta_{[I]} \qquad \qquad \forall\, I\subset S_M.
\feq
For continuous fitness landscapes, the following refined version of Proposition~\ref{agree} has been obtained in \cite{garay}.
Recall that $\bGamma$ is called a homeomorphism of $\Delta_M$ if it is a continuous bijection of $\Delta_M$ and its inverse function is also
continuous. Intuitively, a bijection $\bGamma:\Delta_M\to\Delta_M$ is a homeomorphism when the distance $\|\bGamma(x)-\bGamma(y)\|$ is
small if and only if $\|\bx-\by\|$ is small. $\bGamma$ is a diffeomorphism  if it is a homeomorphism and both $\bGamma$ and $\bGamma^{-1}$ are continuously differentiable.
\begin{theorema}
[\cite{garay}]
\label{gt}
Suppose that $\bGamma=\bx\circ \bvarphi$ for a continuous $\bvarphi:\Delta_M\to\rr_+^M$ and $\bGamma$ is a homeomorphism of $\Delta_M.$
Then:
\item [(i)] We have:
\beq
\Gamma\big(\Delta_{[I]}^\circ\big)=\Delta_{[I]}^\circ \quad \rnd \quad \Gamma\big(\partial(\Delta_{[I]})\big)=\partial\big(\Delta_{[I]}\big),\qquad  \forall\,I\subset S_M.
\feq
\item [(ii)] If, in addition, $\bGamma$ is a diffeomorphism of $\Delta_M$ and each $\varphi_i,$ $i\in S_M,$ can be extended to an open neighborhood
$U$ of $\Delta_M$ in $\rr^M$ in such a way that the extension is continuously differentiable on $U,$ then $\varphi_i(\bx)>0$ for all $\bx\in\Delta_M$ and
$i\in S_M.$
\end{theorema}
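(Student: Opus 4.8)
The plan is to disentangle the \emph{algebraic} content of the hypothesis (the factored form $\Gamma_i(\bx)=x(i)\varphi_i(\bx)$ with $\varphi_i\ge 0$) from its \emph{topological}/\emph{differential} content (that $\bGamma$ is a homeomorphism, resp.\ a diffeomorphism), and then to combine the two. For part (i), I would first record the elementary support-monotonicity supplied by the factor $x(i)$: since $\varphi_i\ge 0$, one has $\Gamma_i(\bx)=0$ whenever $x(i)=0$, so $\calc(\bGamma(\bx))\subseteq\calc(\bx)$ for every $\bx$, and in particular $\bGamma(\Delta_{[I]}^\circ)\subseteq\Delta_{[I]}$ for each $I\subseteq S_M$. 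The goal is to promote this inclusion to an equality onto the relative interior. Viewing $\bGamma$ restricted to $\Delta_{[I]}^\circ$, an open subset of the $(|I|-1)$-dimensional affine hull of $\Delta_{[I]}$, the homeomorphism hypothesis makes it a continuous injection, so Brouwer's invariance of domain applies and $\bGamma(\Delta_{[I]}^\circ)$ is open in that affine hull. An open subset of the affine hull contained in $\Delta_{[I]}$ cannot meet the relative boundary $\partial(\Delta_{[I]})$; hence $\bGamma(\Delta_{[I]}^\circ)\subseteq\Delta_{[I]}^\circ$, and the image is open in $\Delta_{[I]}^\circ$.

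Next I would show $\bGamma(\Delta_{[I]}^\circ)$ is also \emph{closed} in $\Delta_{[I]}^\circ$ and then invoke connectedness. For closedness, take $\by_n=\bGamma(\bx_n)\to\by\in\Delta_{[I]}^\circ$ with $\bx_n\in\Delta_{[I]}^\circ$; by compactness of $\Delta_{[I]}$ a subsequence of $\bx_n$ converges to some $\bx\in\Delta_{[I]}$, and continuity gives $\bGamma(\bx)=\by$. If $\bx$ lay on $\partial(\Delta_{[I]})$ then $\calc(\bx)\subsetneq I$, forcing $\calc(\by)\subseteq\calc(\bx)\subsetneq I$ and hence $\by\in\partial(\Delta_{[I]})$, contradicting $\by\in\Delta_{[I]}^\circ$; so $\bx\in\Delta_{[I]}^\circ$ and $\by\in\bGamma(\Delta_{[I]}^\circ)$. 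Since $\Delta_{[I]}^\circ$ is nonempty and connected while $\bGamma(\Delta_{[I]}^\circ)$ is both open and closed in it, I conclude $\bGamma(\Delta_{[I]}^\circ)=\Delta_{[I]}^\circ$. The boundary statement then follows for free: writing $\partial(\Delta_{[I]})$ as the disjoint union of the relative interiors $\Delta_{[J]}^\circ$ over the proper nonempty subfaces $J\subsetneq I$ and applying the interior result to each $J$ gives $\bGamma(\partial(\Delta_{[I]}))=\partial(\Delta_{[I]})$.

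For part (ii), part (i) already delivers positivity at the support indices: if $\bx\in\Delta_{[I]}^\circ$ then $\bGamma(\bx)\in\Delta_{[I]}^\circ$, so $\Gamma_i(\bx)>0$ and therefore $\varphi_i(\bx)=\Gamma_i(\bx)/x(i)>0$ for every $i\in I=\calc(\bx)$. The real content is to obtain $\varphi_k(\bx)>0$ for the \emph{absent} indices $k\notin\calc(\bx)$, and here I would use the diffeomorphism hypothesis with the $C^1$ extension. Fix $\bx\in\Delta_{[I]}^\circ$ and compute $D\bGamma(\bx)$ on the tangent space $T=\{\bv\in\rr^M:\sum_i v(i)=0\}$ via the ambient formula $\Gamma_\ell=x(\ell)\varphi_\ell$: for any $\ell\notin I$ one has $x(\ell)=0$, so $\partial\Gamma_\ell/\partial x_m(\bx)=\delta_{\ell m}\varphi_\ell(\bx)$ (the term $x(\ell)\,\partial_m\varphi_\ell$ drops, finite by the extension). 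In the basis of $T$ adapted to $T=T_{\bx}\Delta_{[I]}\oplus\operatorname{span}\{\be_m-\be_j:m\notin I\}$ (with $j\in I$ fixed), the matrix of $D\bGamma(\bx)$ is therefore block upper-triangular, with upper-left block $D(\bGamma|_{\Delta_{[I]}})(\bx)$ and lower-right block $\operatorname{diag}(\varphi_m(\bx):m\notin I)$, yielding
\[
\det D\bGamma(\bx)=\det D\big(\bGamma|_{\Delta_{[I]}}\big)(\bx)\cdot\prod_{m\notin I}\varphi_m(\bx).
\]
By part (i) the face $\Delta_{[I]}$ is invariant and $\bGamma|_{\Delta_{[I]}}$ is a diffeomorphism of it, so the first factor is nonzero; since $\bGamma$ is a diffeomorphism of $\Delta_M$ the left-hand side is nonzero as well. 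Hence $\prod_{m\notin I}\varphi_m(\bx)\ne 0$, and as each $\varphi_m\ge 0$ this forces $\varphi_m(\bx)>0$ for all $m\notin I$. Combined with the support case, this gives $\varphi_i(\bx)>0$ for all $\bx\in\Delta_M$ and all $i\in S_M$.

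I expect the main obstacle to lie in part (i): the inclusion $\bGamma(\Delta_{[I]}^\circ)\subseteq\Delta_{[I]}^\circ$ is purely algebraic, but upgrading it to a surjection requires genuine topological input, and the cleanest route I see is the open-plus-closed-plus-connected argument above, in which invariance of domain supplies openness and the support-monotonicity supplies closedness. In part (ii) the only delicate point is the correct bookkeeping of the differential at a boundary point; once the block-triangular structure is isolated, the determinant factorization makes the role of the diffeomorphism hypothesis entirely transparent.
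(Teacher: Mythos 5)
There is nothing in the paper to compare your argument against: the statement is quoted, not proved, in this paper --- the authors explicitly defer to \cite{garay}, where part (i) is the Surjectivity Theorem on p.~1022 and part (ii) is Corollary~6.1. So your proposal has to stand on its own, and it does. Part (i) is a clean open--closed--connected argument: the factorization $\Gamma_i(\bx)=x(i)\varphi_i(\bx)$ with $\varphi_i\geq 0$ gives the support monotonicity $\calc(\bGamma(\bx))\subseteq\calc(\bx)$, hence $\bGamma(\Delta_{[I]}^\circ)\subseteq\Delta_{[I]}$; injectivity plus invariance of domain in the affine hull shows the image is open in $\Delta_{[I]}^\circ$ (an open set in the hull contained in $\Delta_{[I]}$ cannot touch $\partial(\Delta_{[I]})$); compactness of $\Delta_{[I]}$ together with the same support monotonicity gives relative closedness; and connectedness of the convex set $\Delta_{[I]}^\circ$ forces equality. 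The boundary identity then follows from the disjoint decomposition of $\partial(\Delta_{[I]})$ into the open sub-faces $\Delta_{[J]}^\circ,$ $\emptyset\neq J\subsetneq I,$ each of which you have already shown to be invariant. All steps check, including the degenerate case $|I|=1.$ Part (ii) is also correct: the product rule applied to the $C^1$ extension yields $\partial\Gamma_\ell/\partial x_m(\bx)=\delta_{\ell m}\varphi_\ell(\bx)$ whenever $x(\ell)=0,$ the adapted basis gives the block upper-triangular form of $D\bGamma(\bx)$ on $T=\{\bv\in\rr^M:\sum_i v(i)=0\},$ and non-vanishing of the full determinant forces every diagonal entry $\varphi_m(\bx),$ $m\notin\calc(\bx),$ to be strictly positive; positivity at the support indices is immediate from part (i).

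Two minor points, neither a substantive gap. First, your clause ``the first factor is nonzero'' is superfluous: once $\det D\bGamma(\bx)|_T\neq 0,$ both diagonal blocks of a block-triangular matrix automatically have nonzero determinant, so you do not need to argue separately that $\bGamma|_{\Delta_{[I]}}$ is a diffeomorphism of the face. Second, the one assertion you leave implicit --- that $D\bGamma(\bx)|_T$ is invertible at a point $\bx$ lying on the boundary of $\Delta_M$ --- deserves a sentence, since $\Delta_M$ is a manifold with corners and the identity $\bGamma^{-1}\circ\bGamma=\mathrm{id}$ holds only on $\Delta_M,$ not on a full neighborhood. The fix is standard: the inward-pointing directions at $\bx$ span $T,$ the chain rule for the $C^1$ extensions applies to one-sided derivatives along such directions, and the resulting identity $D\bGamma^{-1}(\bGamma(\bx))\,D\bGamma(\bx)=\mathrm{id}$ extends to all of $T$ by linearity. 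With that sentence added, your proof is complete.
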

The first part of the above theorem is the Surjectivity Theorem on p.~1022 of \cite{garay} and the second part is Corollary~6.1 there.
The assumption that $\bGamma$ is a bijection, i.\,e. one-to-one and onto on $\Delta_M,$ seems to be quite restrictive in biological applications \cite{fisher5, landscape, gavr, rugged}. Nevertheless, the theorem is of a considerable interest for us because it turns out that its conditions are satisfied for $\bGamma$ in \eqref{lf4} and \eqref{ef4}.
\subsection{Fisher's fundamental theorem for the reproductive fitness}
\label{arfa}
The aim of this section is to introduce a variation of Fisher's fundamental theorem, suitable for the deterministic replicator
dynamics and reproductive fitness. The result is stated in Theorem~\ref{mdns}. An example of application to the stochastic Wright-Fisher model
is given in Proposition~\ref{cava}.
\par
For the Darwinian fitness we have $\ol \bff(\bx):=\sum_{i\in S_M} x_if_i(\bx)=
\sum_{i\in S_M} \Gamma_i(\bx)=1.$ Thus the average Darwinian fitness is independent of the population profile, and in particular doesn't change with time.
In contrary, the average reproductive fitness $\ol \bvarphi(\bx):= \sum_{i\in S_M} x_i\varphi_i(\bx)$ is equal to the (arbitrary) normalization factor $h(\bx)$ in \eqref{sex},
and hence in principle can carry more structure. If the mean-field dynamics is within the domain of applicability of Fisher's fundamental
theorem of natural selection, we should have \cite{birch, fisher11, elf, fitn1, ska}:
\beqn
\label{sc}
h\big(\bGamma(\bx)\big)\geq h(\bx)\qquad \forall\,\bx\in\Delta_M.
\feqn
In order to elaborate on this point, we need to introduce the notion of chain recurrence.
\begin{definition}
[\cite{conley}]
\label{conley}
An $\veps$-chain from $\bx\in \Delta_M$ to $\by\in\Delta_M$ for $\bGamma$ is a sequence of points in $\Delta_M,$
$\bx = \bx_0, \bx_1, \ldots , \bx_n = \by,$ with $n\in\nn,$ such that
$\big\|\bGamma(\bx_i)-\bx_{i+1}\big\| < \veps$ for $0 \leq i \leq n-1.$
A point $\bx \in \Delta_M$ is called chain recurrent if for every $\veps > 0$ there is an $\veps$-chain
from $x$ to itself. The set $\calr(\bGamma)$ of chain recurrent points is called the chain recurrent set of $\bGamma.$
\end{definition}
Write $\bx \looparrowright \by$ if for every $\veps>0$ there is an $\veps$-chain from $\bx$ to $\by$ and  $\bx\sim \by$ if both $\bx \looparrowright \by$ and $\by \looparrowright \bx$ hold true.
It is easy to see that $\sim$ is an equivalence relation on $\calr(\bGamma).$ For basic properties of the chain recurrent set we refer the reader to \cite{conley} and \cite{p3,am, p1}. In particular,  Conley showed (see Proposition~2.1 in \cite{p1}) that equivalence classes of $\calr(\bGamma)$ are precisely its connected components.
\par
Somewhat surprisingly, the following general result is true:
\begin{theorem}
\label{mdns}
Assume that $\bGamma:\Delta_M\to\Delta_M$ is continuous. Then there exists a reproductive fitness function $\bvarphi$
such that
\begin{itemize}
\item [(i)] $\bvarphi$ is continuous in $\Delta_{[I]}^\circ$ for all $I\subset \Delta_M.$
\item [(ii)] The average $h(\bx)=\bx\cdot \bvarphi(\bx)$ satisfies \eqref{sc}
and, moreover, is a complete Lyapunov function for $\bGamma$ (see Definition~\ref{lyapunov}).
\end{itemize}
In particular, the difference $h\big(\bGamma(\bx)\big)-h(\bx)$ is strictly increasing on $\Delta_M\backslash \calr(\bGamma).$
\end{theorem}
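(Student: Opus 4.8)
The plan is to read $h$ straight off Conley's fundamental theorem of dynamical systems and then synthesize the fitness $\bvarphi$ from $h$ via Proposition~\ref{agree}, so that essentially all the work is done by the cited theorem. First I would note that $\Delta_M$ is a compact metric space and $\bGamma$ a continuous self-map, so the discrete-time form of Conley's theorem (\cite{conley}; see also \cite{p1,p3,am}) yields a continuous complete Lyapunov function $g:\Delta_M\to\rr$ for $\bGamma$. Choosing the sign convention under which $g$ is non-decreasing along orbits, $g$ satisfies $g(\bGamma(\bx))\geq g(\bx)$ on all of $\Delta_M$, with the inequality strict exactly on $\Delta_M\setminus\calr(\bGamma)$; it is constant on each connected component of $\calr(\bGamma)$ and takes distinct values on distinct components. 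Since $g$ is continuous on a compact set it is bounded below, so after adding a constant I may assume $h:=g$ takes values in $\rr_+$---a shift that alters neither the increments $h(\bGamma(\bx))-h(\bx)$ nor the level-set structure on $\calr(\bGamma)$, hence leaves the complete Lyapunov property intact.

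Next I would invoke Proposition~\ref{agree}(ii). The standing replicator form \eqref{replica} forces $\Gamma_i(\bx)=0$ whenever $x(i)=0$, which is precisely the no-mutation condition \eqref{nom} required there. Setting
\[
\varphi_i(\bx)=\frac{\Gamma_i(\bx)}{x(i)}\,h(\bx)\qquad \forall\, i\in\calc(\bx),
\]
and assigning the unconstrained components $i\notin\calc(\bx)$ the value zero, Proposition~\ref{agree}(ii) guarantees that $\bvarphi$ is a reproductive fitness. Summing and using \eqref{nom} gives $\bx\cdot\bvarphi(\bx)=h(\bx)\sum_{i\in\calc(\bx)}\Gamma_i(\bx)=h(\bx)$, so the average reproductive fitness in part (ii) is exactly the Conley function $h$ just produced; property \eqref{sc} and the complete Lyapunov property are then inherited verbatim.

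For part (i) I would argue face by face: on the relatively open face $\Delta_{[I]}^\circ$ the support is constant, $\calc(\bx)=I$, and $x(i)>0$ for all $i\in I$, so each quotient $\Gamma_i(\bx)/x(i)$ is a continuous function (a continuous numerator over a nonvanishing continuous denominator) and remains continuous after multiplication by the continuous $h$; the components $i\notin I$ are identically zero and hence continuous on that face. The concluding assertion---strict positivity of $h(\bGamma(\bx))-h(\bx)$ on $\Delta_M\setminus\calr(\bGamma)$---is nothing but the defining strict-increase property of the complete Lyapunov function off the chain recurrent set.

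I do not anticipate a genuine analytic difficulty: the whole argument is a translation, and the single substantive ingredient, existence of a complete Lyapunov function for a continuous map on a compact metric space, is supplied as a black box by Conley's theorem. The only points needing care are bookkeeping---fixing the sign convention so that $h$ increases (not decreases) along orbits, justifying the harmless additive shift into $\rr_+$, checking that the replicator form delivers \eqref{nom} so that Proposition~\ref{agree} applies, and observing that continuity of $\bvarphi$ is claimed only on each $\Delta_{[I]}^\circ$, which is exactly what renders the division by $x(i)$ unproblematic.
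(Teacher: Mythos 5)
Your proposal is correct and takes essentially the same route as the paper: the paper likewise invokes the discrete-time Conley--Hurley result (Theorem~\ref{mt}) to obtain a complete Lyapunov function $h$ (already valued in $[0,1]$, so your additive shift is unnecessary) and then sets $\varphi_i(\bx)=\frac{\Gamma_i(\bx)}{x(i)}h(\bx)$ for $i\in\calc(\bx)$ and $\varphi_i(\bx)=0$ otherwise. Your extra verifications --- that the replicator form \eqref{replica} yields the no-mutation condition \eqref{nom} so Proposition~\ref{agree} applies, that $\bx\cdot\bvarphi(\bx)=h(\bx)$, and the face-by-face continuity --- are details the paper leaves implicit.
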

Theorem~\ref{mdns} is an immediate consequence of a well-known mathematical result which
is sometimes called a ``fundamental theorem of dynamical systems"  \cite{franks, norton, fisheri1}. The following definition and theorem are adapted from \cite{hurley1}. To simplify notation, in contrast to the conventional definition, we define Lyapunov function as a function non-decreasing (rather than non-increasing) along the orbits of $\bpsi_k.$
\begin{definition}
\label{lyapunov}
A complete Lyapunov function for $\bGamma:\Delta_M\to\Delta_M$ is a continuous function $h : \Delta_M \to [0,1]$
with the following properties:
\item [1.] If $\bx\in \Delta_M\backslash\calr(\bGamma),$ then $h\big(\bGamma(\bx)\big) > h(\bx).$
\item [2.] If $\bx, \by \in\calr(\bGamma)$ and $ \bx \looparrowright \by,$ then $h(\bx)\leq h(\by).$ Moreover, $h(\bx) = h(\by)$ if and only if $\bx \sim \by.$
\item [3.] The image $h\big(\calr(\bGamma)\big)$ is a compact nowhere dense subset of $[0,1].$
\end{definition}
\begin{theorema}
[Fundamental theorem of dynamical systems]
\label{mt}
If $\bGamma:\Delta_M\to\Delta_M$ is continuous, then there is a complete Lyapunov function for $\bGamma.$
\end{theorema}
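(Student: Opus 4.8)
The plan is to follow Conley's attractor--repeller construction, in the form adapted to merely continuous (non-invertible) maps on a compact metric space by Hurley; since $\Delta_M$ is compact and $\bGamma$ is continuous, the entire machinery is available. First I would set up the basic objects. A compact set $A\subset \Delta_M$ is an \emph{attractor} if it has a neighborhood $U$ (a trapping region) with $\bGamma(\overline U)\subset \mbox{\rm Int}(U)$ and $A=\bigcap_{n\geq 0}\bGamma^n(\overline U)$; its \emph{dual repeller} is $A^*=\{\bx:\omega(\bx)\cap A=\emptyset\}$, where $\omega(\bx)$ is the forward omega-limit set. The foundational step, and the heart of Conley theory, is the identity
\[
\calr(\bGamma)=\bigcap_{A\ \mbox{\rm an attractor}}\bigl(A\cup A^*\bigr),
\]
which recovers the chain recurrent set as exactly the points that no attractor--repeller pair can separate.

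The second step is to build, for each attractor--repeller pair $(A,A^*)$, an elementary Lyapunov function $g_A:\Delta_M\to[0,1]$ that is equal to $1$ on $A$, equal to $0$ on $A^*$, non-decreasing along orbits, and \emph{strictly} increasing at every point outside $A\cup A^*$. (I use the paper's increasing convention, so orbits flow toward the attractor where $g_A$ is maximal.) The standard device is to start from a distance-to-$A$ type function and regularize it by taking suprema along the forward orbit, then rescale continuously into $[0,1]$; continuity and the strict monotonicity off $A\cup A^*$ follow from the trapping condition $\bGamma(\overline U)\subset\mbox{\rm Int}(U)$.

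Third, I would use the separability of $\Delta_M$ to argue that there are only countably many attractors (distinct attractors have distinct open basins, and a second-countable space admits only countably many such sets). Enumerating the attractors as $A_1,A_2,\ldots$ with elementary Lyapunov functions $g_1,g_2,\ldots$, I set
\[
h(\bx)=\sum_{n=1}^{\infty}\frac{2\,g_n(\bx)}{3^{\,n}}.
\]
The series converges uniformly, so $h$ is continuous, and each term is non-decreasing along orbits. Property~1 of Definition~\ref{lyapunov} then holds: if $\bx\notin\calr(\bGamma)$ then by the characterization $\bx\notin A_n\cup A_n^*$ for some $n$, so $g_n$ increases strictly while no other term decreases. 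Property~2 follows because on $\calr(\bGamma)$ each $g_n$ takes only the values $0$ and $1$, and two chain recurrent points lie in the same chain component precisely when no attractor separates them, i.e.\ when they agree in every $g_n$; the ordering $\bx\looparrowright\by$ forces $h(\bx)\leq h(\by)$. The ternary weights $2/3^{\,n}$ are chosen exactly to give Property~3: on $\calr(\bGamma)$ one has $h(\bx)=\sum 2\veps_n/3^{\,n}$ with $\veps_n\in\{0,1\}$, so $h(\calr(\bGamma))$ lands in the middle-thirds Cantor set; being a continuous image of the compact set $\calr(\bGamma)$, it is a compact subset of a nowhere dense set, hence compact and nowhere dense.

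I expect the main obstacle to be the first step, the identity $\calr(\bGamma)=\bigcap_A(A\cup A^*)$, and in particular the inclusion that a point lying in every $A\cup A^*$ is chain recurrent versus its converse. The delicate direction is converting the trapping inequality $\bGamma(\overline U)\subset\mbox{\rm Int}(U)$ into a uniform $\veps>0$ that no $\veps$-chain is permitted to cross outward, so that a point genuinely strictly inside an attractor cannot return to itself by small jumps. Because $\bGamma$ is assumed only continuous and not a homeomorphism, I cannot manipulate backward orbits freely, so the dual repeller must be handled through omega-limits and forward trapping regions in Hurley's style rather than through the symmetric flow argument of the classical invertible theory; getting the repeller side of the dichotomy right without invertibility is where the care is required.
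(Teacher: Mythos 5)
The paper itself contains no proof of Theorem~\ref{mt}: it is imported as a known result, due to Conley \cite{conley} for flows on compact spaces and to Hurley \cite{hurley1} for continuous maps of separable metric spaces, so there is no internal argument to compare yours against. Your proposal reconstructs precisely the attractor--repeller proof that underlies those citations: the characterization $\calr(\bGamma)=\bigcap_{A}\bigl(A\cup A^*\bigr)$, an elementary Lyapunov function for each attractor--repeller pair, countability of the family of attractors, and the weighted ternary sum that forces $h\big(\calr(\bGamma)\big)$ into the middle-thirds Cantor set, as Property~3 of Definition~\ref{lyapunov} requires. So in substance you are following the same route as the sources the paper invokes, and the architecture is sound; you also correctly locate the hard technical core (the inclusion $\bigcap_{A}\bigl(A\cup A^*\bigr)\subset\calr(\bGamma)$ and the treatment of dual repellers without invertibility, which for maps is handled in \cite{hurley1} and \cite{p3}).

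Two of your intermediate justifications are, however, false as stated and need the standard repairs. First, countability of the set of attractors does not follow from ``a second-countable space admits only countably many such sets'': a second-countable space can have uncountably many distinct open sets (the open intervals of $\rr$ already do), so distinctness of basins proves nothing by itself. The correct argument fixes a countable basis and assigns to each attractor $A$ a finite union $V$ of basis elements with $A\subset V\subset U$ for some trapping region $U$; since $A$ is recovered from $V$ as $\bigcap_{n\geq 0}\bGamma^n(\overline V)$, this assignment is injective into the countable family of finite unions of basis elements. Second, the supremum-along-the-forward-orbit regularization is non-decreasing along orbits but is \emph{not} strictly increasing off $A\cup A^*$: if the supremum defining $g_A(\bx)$ is attained at some iterate $n\geq 1$, then $g_A\big(\bGamma(\bx)\big)=g_A(\bx)$ even though $\bx\notin A\cup A^*$. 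One needs the further averaging step $\witi g_A(\bx)=\sum_{n\geq 0}2^{-(n+1)}g_A\big(\bGamma^n(\bx)\big)$, the discrete analogue of Conley's exponentially weighted time average; because the forward orbit of any $\bx$ outside $A\cup A^*$ converges to $A$, the function $g_A$ is non-constant along that orbit, so $\witi g_A$ is strictly increasing there, while uniform convergence of the series preserves continuity and the values on $A$ and $A^*$. With these two corrections your outline matches the proof in the literature that the paper cites.
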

\par
To derive Theorem~\ref{mdns} from this fundamental result, set $\varphi_i(\bx)=\frac{\Gamma_i(\bx)}{x_i}h(\bx)$ for $i\in \calc(\bx)$
and, for instance, $\varphi_i(\bx)=0$ for $i\not\in\calc(\bx).$
\par
Theorem~\ref{mt} was established by Conley for continuous-time dynamical systems on a compact space \cite{conley}.
The above discrete-time version of the theorem is taken from \cite{hurley1}, where it is proven for an arbitrary separable metric state space.
For recent progress and refinements of the theorem see \cite{dns1, dns6, dns3} and references therein.
\par
The Lyapunov functions constructed in \cite{conley} and \cite{hurley1} map $\calr(\bGamma)$ into a subset of the Cantor middle-third set. Thus,
typically, $\Delta_M\backslash \calr(\bGamma)$ is a large set (cf. Section~6.2 in \cite{conley}, Theorem~A in \cite{p3}, and an extension of Theorem~\ref{mt} in \cite{am}). For example, for the class of Wright-Fisher models discussed in Section~\ref{exits}, $\calr(\bGamma)$ consists
of the union of a single interior point (asymptotically stable global attractor in $\Delta_M^\circ$) and the boundary of $\Delta_M$ (repeller).
For a related more general case, see Appendix~B and Example~\ref{price} below.
\par
Recall that a point $\bx\in\Delta_M$ is called recurrent if the orbit $(\bpsi_k)_{k\in\zz_+}$ with $\bpsi_0=\bx$ visits  any open neighborhood of $\bx$ infinitely often. Any recurrent point is chain recurrent, but the converse is not necessarily true. It is easy to verify that the strict inequality
$h(\Gamma(\bx)) < h(\bx)$ holds for any recurrent point $\bx$. The following example builds upon this observation.
\begin{example}
The core of this example is the content of Theorem~4 in \cite{rfit}. Assume that \eqref{sc} holds for a continuous function $h:\Delta_M\to\rr_+$ in \eqref{sex}.
Suppose in addition that the mean-field dynamics is chaotic. More precisely, assume that $\bGamma$ is topologically transitive, that is
$\bGamma$ is continuous and for any two non-empty open subsets $U$ and $V$ of $\Delta_M,$ there exists $k\in\nn$ such that $\bpsi_k(U)\bigcap V\neq \emptyset$
(this is not a standard definition of a chaotic dynamical system, we omitted from a standard Devaney's set of conditions
a part which is not essential for our purpose, cf. \cite{deva, rfit, silver}). Equivalently \cite{silver}, there is $\bx_0\in\Delta_M$ such that the orbit
$\big(\bpsi_k(\bx_0)\big)_{k\in\zz_+}$ is dense in $\Delta_M.$
\par
Let $\bx=\bpsi_k(\bx_0)$ be an arbitrary point on this orbit. To get a contradiction,  assume that $\veps:=\big[h\big(\bGamma(\bx)\big)-h(\bx)\big]>0.$ Let $\delta>0$ be so small that $|h(\bx)-h(\by)|<\veps/2$
whenever $\|\bx-\by\|<\delta.$ By our assumption, there exists $m>k$ such that $\|\bpsi_m(\bx_0)-\bx\|<\delta,$ and hence $h\big(\bpsi_m(\bx_0)\big)<h\big(\bGamma(\bx)\big)=h\big(\bpsi_{k+1}(\bx_0)\big),$ which is impossible in view of the monotonicity condition \eqref{sc}. Thus $h$ is constant along the forward orbit of $\bx_0.$
Since the normalization function $h(\bx)$ is constant on a dense subset of $\Delta_M$ and $\Delta_M$ is compact, $h(\bx)$ is a constant function.
In other words, for a chaotic mean-field dynamics a constant function is the only choice of a continuous normalization $h(\bx)$
that satisfies \eqref{sc}.
\par
A common example of a chaotic dynamical system is the iterations of a one-dimensional logistic map. In our framework this can be translated into the two-dimensional
case $M=2$ and $\bGamma(x,y)=\big(4x(1-x), 1-4x(1-x)\big)=(4xy,1-4xy).$ In principle, the example can be extended to higher-dimensions using, for instance, techniques of \cite{chaose}.
\end{example}
\begin{example}
\label{price}
Suppose that $\bGamma$ is a replicator function defined in \eqref{replica}. If $\bA$ is an $M\times M$ matrix that satisfies the conditions of Theorem~\ref{ept} given in Appendix~B and $\bvarphi$ in \eqref{replica} is defined by either \eqref{lf4} with small enough $\omega>0$ or by \eqref{ef4}, then $\calr(\bGamma) \subset K\bigcup \partial(\Delta_M),$
where $K$ is a compact subset of $\Delta_M^\circ.$ In particular, the complete Lyapunov function $h$ constructed in Theorem~\ref{mt} is strictly increasing on the non-empty open set $\Delta_M^\circ \backslash K.$ Notice that while the average $\bx\cdot \bvarphi(\bx)$ may not be a complete Lyapunov function for $\bGamma,$ the average $h(\bx)$ of the reproductive fitness $\witi \bvarphi(\bx):=h(\bx)\bvarphi(\bx)$ is.
\end{example}
The existence of a non-decreasing reproductive fitness doesn't translate straightforwardly into a counterpart for the stochastic dynamics. Indeed, in the absence of mutations $\bXn_k$ converges almost surely to an absorbing state, i.\,e. to a random vertex of the simplex $\Delta_M.$ Therefore, by the bounded convergence theorem,
\beq
\lim_{k\to\infty} E\big(h(\bXn_k)\,\big|\,\bXn_0=\bx\big)=\sum_{j\in S_M}p_j(\bx)h(e_j),
\feq
where $p_j(\bx)=\lim_{k\to\infty} P\big(\bXn_k=\be_j\,\big|\,\bXn_0=\bx\big).$
It then follows from part 2 of Definition~\ref{lyapunov} that if the boundary $\partial\big(\dmn\big)$ is a repeller for
$\bGamma$ (cf. see Appendix~B), then generically the limit is not equal to the
$\sup_{k\in\nn} E\big(h(\bXn_k)\,\big|\,\bXn_0=\bx\big).$
\par
The following example shows that under some additional convexity assumptions, the stochastic average of a certain continuous reproductive fitness
is a strictly increasing function of time. More precisely, for the class of Wright-Fisher models in the example,
$h\big(\bXn_k)$ is a supermartingale for an explicit average reproductive fitness $h.$
\begin{example}
Let $h:\Delta_M\to\rr_+$ be an arbitrary convex (and hence, continuous) function that satisfies \eqref{sc},
not necessarily the one given by Theorem~\ref{mdns}. Then, by the multivariate Jensen's inequality \cite[p.~76]{fergi},
with probability one, for all $k\in\zz_+,$
\beqn
\label{infact}
E\big(h\big(\bXn_{k+1}\big)\,\big|\,\bXn_k\big)\geq h\big(E\big(\bXn_{k+1}\,\big|\,\bXn_k\big)\big)=h\big(\bGamma\big(\bXn_k\big)\big)\geq h\big(\bXn_k).
\feqn
Thus, $M_k=h\big(\bXn_k\big)$ is a submartingale in the filtration $\calf_k=\sigma\big(\bXn_0,\ldots,\bXn_k\big).$
In particular, the sequence ${\mathfrak f}_k=E(M_k)$ is non-decreasing and converges to its supremum as $k\to\infty.$ In fact, since the first inequality in \eqref{infact} is strict whenever
$\bXn_k\not\in V_M,$ the stochastic average of the reproductive fitness is strictly increasing under the no-mutations assumption \eqref{nom}:
\beq
0\leq E\big(h\big(\bXn_k\big)\big)<E\big(h\big(\bXn_{k+1}\big)\big)\leq 1
\feq
as long as $P\big(\bXn_0\in \dmn^\circ\big)=1.$
\par
The effect of the fitness convexity assumption on the evolution of biological populations has been discussed by several authors, see, for instance, \cite{c,c1,c3} and references therein.
Generally speaking, the inequality $h\big(\alpha \bx+(1-\alpha)\by\big)\leq \alpha h(\bx)+(1-\alpha)h(\by),$ $\alpha\in[0,1],$
$\bx,\by\in \Delta_M,$ that characterizes convexity, manifests an evolutionary disadvantage of the intermediate population $\alpha \bx+(1-\alpha)\by$
in comparison to the extremes $\bx,\by.$ A typical biological example of such a situation is competitive foraging \cite{c1}.
\end{example}
It can be shown (see Section~6 in \cite{hoffa} or \cite{losakin}) that if $\bA$ is a non-negative symmetric $M\times M$ matrix, then \eqref{sc} holds with
$h(\bx)=\bx^T\bA\bx$ for \eqref{lf4} and \eqref{ef4}. Here and henceforth, $\bA^T$ for a matrix $\bA$ (of arbitrary dimensions)
denotes the transpose of $\bA.$  Moreover, the inequality is strict unless $\bx$ is a game equilibrium. The evolutionary games with $\bA=\bA^T$ are referred to in \cite{hoffa} as partnership games, such games are a particular case of so-called potential games (see \cite{rewhs, econegt} and references therein).  In the case of the replicator mean-field dynamics \eqref{replica} and potential games, the potential function $h(\bx)$ coincides with the average reproductive fitness $\bx\cdot \bvarphi(\bx).$ Let
\beqn
\label{seq}
W_M=\Big\{\bx\in\rr^M:\sum_{i\in S_M} x(i)=0\Big\}.
\feqn
It is easy to check that if $\bA$ is positive definite on $W_M,$ that is
\beqn
\label{pdef}
\bw^T\bA\bw>0\qquad \qquad \forall\,\bw\in W_M,\bw\neq \bo,
\feqn
then $\bff(\bx)=\bx^T\bA\bx$ is a convex function on $\Delta_M,$ that is
\beq
\frac{1}{2}\bx^T\bA\bx+\frac{1}{2}\by^T\bA\by>\frac{(\bx+\by)^T}{2}\bA\frac{\bx+\by}{2}\qquad \qquad \forall\,\bx,\by\in \Delta_M,\bx\neq \by.
\feq
We thus have:
\begin{proposition}
\label{cava}
Suppose that $\bA$ is a symmetric invertible $M\times M$ matrix with positive entries such that \eqref{pdef} holds true (i.\,e. $\bA$ is positive-definite on $W_M$).
Then \eqref{infact} holds with $M_k=\big(\bXn_k\big)^T\bA\bXn_k.$
\end{proposition}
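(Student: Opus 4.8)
The plan is to recognize that Proposition~\ref{cava} is simply an instance of the general submartingale mechanism established in the example immediately preceding it: once one knows that $h(\bx)=\bx^T\bA\bx$ is convex on $\Delta_M$ and satisfies the monotonicity condition \eqref{sc}, the whole chain \eqref{infact} follows verbatim from that example (multivariate Jensen's inequality for the outer inequality, the identity \eqref{eq1} for the middle equality, and \eqref{sc} for the inner inequality), with $M_k=h(\bXn_k)=(\bXn_k)^T\bA\bXn_k$. So I would reduce the proof to verifying exactly these two properties of $h$ from the stated hypotheses on $\bA$.

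First I would establish strict convexity. The key observation is that any two population profiles $\bx,\by\in\Delta_M$ differ by a vector lying in $W_M$, since $\sum_i\big(x(i)-y(i)\big)=0$ places $\bx-\by$ in the set \eqref{seq}. Using the symmetry of $\bA$, a direct expansion yields the midpoint identity
\beq
\frac{1}{2}\bx^T\bA\bx+\frac{1}{2}\by^T\bA\by-\Big(\frac{\bx+\by}{2}\Big)^T\bA\frac{\bx+\by}{2}=\frac{1}{4}(\bx-\by)^T\bA(\bx-\by).
\feq
By the assumption \eqref{pdef}, the right-hand side is strictly positive whenever $\bx\neq\by$, which is precisely the midpoint (and hence, by continuity, full) strict convexity of $h$ on $\Delta_M$.

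Second I would invoke the monotonicity \eqref{sc}. Since $\bA$ has positive entries it is in particular a non-negative symmetric matrix, so the cited partnership-game result (Section~6 of \cite{hoffa}, or \cite{losakin}) applies and gives $h\big(\bGamma(\bx)\big)\geq h(\bx)$ for the replicator update rule driven by a fitness of the form \eqref{lf4} or \eqref{ef4}. The positivity of the entries simultaneously guarantees that the normalization $\sum_{j}x(j)\varphi_j(\bx)$ stays strictly positive, so that $\bGamma$ and $h$ are well defined throughout the simplex. With convexity and \eqref{sc} both in hand, \eqref{infact} is immediate.

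The point to be careful about — rather than a genuine obstacle — is that convexity must be read off from the \emph{subspace} positive-definiteness \eqref{pdef} rather than from positive-definiteness of $\bA$ as a full matrix; the reduction to $W_M$ via $\bx-\by\in W_M$ is exactly what makes the weaker hypothesis \eqref{pdef} sufficient, and it also shows that the invertibility of $\bA$ (useful elsewhere for the uniqueness of the interior equilibrium) plays no role in the submartingale conclusion, which rests only on symmetry, non-negativity, and \eqref{pdef}. The genuinely substantive analytic input, the monotonicity \eqref{sc} for partnership games, is imported from \cite{hoffa,losakin} and is not reproved here.
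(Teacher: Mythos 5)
Your proposal is correct and follows essentially the same route as the paper: the paper also obtains Proposition~\ref{cava} by combining the monotonicity \eqref{sc} for non-negative symmetric $\bA$ (cited from \cite{hoffa}, \cite{losakin}) with the convexity of $h(\bx)=\bx^T\bA\bx$ implied by \eqref{pdef}, and then invoking the Jensen-based submartingale chain \eqref{infact} from the preceding example. The only difference is that the paper labels the convexity step ``easy to check,'' whereas you supply the verification explicitly via the midpoint identity and the observation that $\bx-\by\in W_M$, which is exactly the intended argument.
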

It is shown in \cite{losakin} that under the conditions of the proposition, $\bpsi_k$ converges, as $k$ tends to infinity, to an equilibrium
on the boundary of the simplex $\Delta_M$ (cf. Theorem~\ref{bsa} in Appendix~B). The proposition thus indicates that when $\bA$ is positive-definite on $W_M,$
the stochastic and deterministic mean-field model might have very similar asymptotic behavior when $N$ is large. This observation partially
motivated Conjecture~\ref{conja} stated below in Section~\ref{exits}.
\subsection{Incorporating mutations}
\label{mutants}
In order to incorporate mutations into the interpretation of the update rule $\bGamma$ in terms of the population's
fitness landscape, we adapt the Wright-Fisher model with neutral selection and mutation of \cite{stat62}. Namely, for the purpose
of this discussion we will make the following assumption:
\begin{assume}
\label{asm}
The update rule $\bGamma$ can be represented in the form $\bGamma(\bx)=\bUpsilon(\bx^T\bTheta),$ where
$\bx^T$ denotes the transpose of $\bx,$ and
\begin{enumerate}
\item $\bUpsilon: \Delta_M\to\Delta_M$ is a vector field that satisfies the ``no-mutations" condition
\beqn
\label{nm1}
x(j)>0~\Longrightarrow~\Upsilon_j(\bx)>0
\feqn
for all $\bx\in\Delta_M$ and $j\in S_M.$
\item $\bTheta=(\Theta_{ij})_{i,j\in S_M}$ is $M\times M$ stochastic matrix, that is $\Theta_{ij}\geq 0$ for all $i,j\in S_m$ and
$\sum_{j\in S_M}\Theta_{ij}=1$ for all $i\in S_M.$
\end{enumerate}
\end{assume}
The interpretation is that while the entry $\Theta_{ij}$ of the \textit{mutation matrix} $\bTheta$ specifies the probability of mutation
(mutation rate) of a type $i$ particle into a particle of type $j,$ the update rule $\bUpsilon$ encompasses the effect of evolutional forces other than mutation, for instance genetic drift and selection. The total probability of mutation of a type $i$ particle is $1-\Theta_{ii},$
and, accordingly, $\bGamma$ satisfies the no-mutation condition if $\bTheta$ is the unit matrix. Note that, since $\bTheta$ is assumed to be stochastic, the linear transformation
$\bx\mapsto \bx^T\bTheta$ leaves the simplex $\Delta_M$ invariant, that is $\bx^T\bTheta\in \Delta_M$ for all $\bx\in \Delta_M.$ Both the major implicit assumptions forming a foundation for writing $\bGamma$ as the composition of a non-mutative map $\bUpsilon$ and the action of a mutation matrix $\bTheta,$ namely that mutation can be explicitly separated from other evolutionary forces and that mutations happen at the beginning of each cycle before other evolutionary forces take effect, are standard \cite{mweaks, stat62, iwf} even though at least the latter is a clear idealization \cite{mb8fit}.
Certain instances of Fisher's fundamental theorem of natural selection have been extended to include mutations in \cite{fmuta, fishm}.
\par
We now can extend the definition \eqref{fit} as follows:
\begin{definition}
Let Assumption~\ref{asm} hold. A vector field $\bvarphi: \Delta_M\to \rr_+^M$ is called the reproductive fitness landscape of the model
if for all $\bx\in \Delta_M$ we have:
\beqn
\label{mfit}
\bx^T\bTheta\bvarphi (\bx)>0 \qquad \rnd \qquad
\Gamma_i(\bx)=\frac{(x^T\Theta)(i)\cdot \varphi_i(\bx)}{\bx^T\bTheta\bvarphi (\bx)}\quad  \forall\, i\in S_M,
\feqn
where $\bx^T$ is the transpose of $\bx.$
\end{definition}
An analogue of Proposition~\ref{agree} for a model with mutation reads:
\begin{proposition}
Let Assumption~\ref{asm} hold. Then $\bvarphi:\Delta_M\to\Delta_M$ is a reproductive fitness landscape if and only if
\beqn
\label{land}
\varphi_i(\bx)=
\left\{
\begin{array}{ll}
\frac{\Gamma_i(\bx)}{(x^T\Theta)(i)}h(\bx)~&~\mbox{\rm if}~i\in \calc\big(\bx^T\bTheta\big),\\
[4pt]
0~&~\mbox{\rm if}~i\not\in \calc\big(\bx^T\bTheta\big)
\end{array}
\right.
\feqn
for some function $h:\Delta_M\to (0,\infty).$
\end{proposition}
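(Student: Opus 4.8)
The plan is to reduce the statement to a pointwise application of the no-mutation characterization already established in Proposition~\ref{agree}. First I would fix an arbitrary profile $\bx\in\Delta_M$ and abbreviate $\bz:=\bx^T\bTheta$. Since $\bTheta$ is stochastic, $\bz$ is again a probability vector, so $\bz\in\Delta_M$ and $\calc(\bz)=\calc(\bx^T\bTheta)$ is well defined. By Assumption~\ref{asm} we have $\bGamma(\bx)=\bUpsilon(\bz)$, and the no-mutation condition in Assumption~\ref{asm} pins down the support of $\bUpsilon(\bz)$ to be exactly $\calc(\bz)$: positivity on the support, $\Gamma_i(\bx)=\Upsilon_i(\bz)>0$ for $i\in\calc(\bz)$, is \eqref{nm1}, while the absence of mutations prevents the creation of new types and so forces $\Gamma_i(\bx)=\Upsilon_i(\bz)=0$ for $i\notin\calc(\bz)$. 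Consequently $\sum_{i\in\calc(\bz)}\Gamma_i(\bx)=\sum_{i\in S_M}\Gamma_i(\bx)=1$. These two facts are the only structural input needed, and they play exactly the role that \eqref{nom} plays in Proposition~\ref{agree}, but now relative to the shifted argument $\bz$ rather than $\bx$.

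For the ``if'' direction I would substitute the candidate \eqref{land} directly into the two requirements of \eqref{mfit}. Computing the normalizing scalar gives $\bx^T\bTheta\bvarphi(\bx)=\sum_{i\in\calc(\bz)}z(i)\varphi_i(\bx)=h(\bx)\sum_{i\in\calc(\bz)}\Gamma_i(\bx)=h(\bx)$, which is strictly positive because $h$ takes values in $(0,\infty)$; this verifies the first half of \eqref{mfit} and simultaneously identifies the denominator in the replicator identity of \eqref{mfit} as $h(\bx)$. Feeding this back, for $i\in\calc(\bz)$ the right-hand side collapses to $\frac{z(i)\cdot\frac{\Gamma_i(\bx)}{z(i)}h(\bx)}{h(\bx)}=\Gamma_i(\bx)$, and for $i\notin\calc(\bz)$ both sides vanish since $z(i)=0$ and $\Gamma_i(\bx)=0$. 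Non-negativity of $\bvarphi$ is immediate from $\Gamma_i\ge 0$, $z(i)>0$ on the support, and $h>0$, so \eqref{land} indeed defines a reproductive fitness landscape.

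For the ``only if'' direction, assume $\bvarphi$ satisfies \eqref{mfit} and set $h(\bx):=\bx^T\bTheta\bvarphi(\bx)$; the first inequality in \eqref{mfit} guarantees $h:\Delta_M\to(0,\infty)$. Solving the replicator identity for $\varphi_i(\bx)$ on the support, i.e. for $i\in\calc(\bz)$ where $z(i)>0$, gives precisely $\varphi_i(\bx)=\frac{\Gamma_i(\bx)}{z(i)}h(\bx)$, the first line of \eqref{land}. The one point requiring care is the off-support behaviour: for $i\notin\calc(\bz)$ the identity in \eqref{mfit} reduces to $0=0$ and imposes no constraint on $\varphi_i(\bx)$, so these coordinates are free and enter neither the defining relation nor the model. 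Setting them to zero — exactly the second line of \eqref{land} — is therefore a harmless canonical normalization, and with this convention the representation holds. I expect this off-support bookkeeping, together with the correct invocation of the no-mutation property of $\bUpsilon$ (ensuring simultaneously $\Gamma_i>0$ on $\calc(\bz)$ and $\Gamma_i=0$ off it), to be the only genuinely delicate part; the remainder is the same algebra as in Proposition~\ref{agree} carried out at the point $\bz=\bx^T\bTheta$.
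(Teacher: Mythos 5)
Your proposal is correct and takes essentially the same route as the paper's proof: the ``if'' direction is verified by computing $\bx^T\bTheta\bvarphi(\bx)=h(\bx)\sum_{i\in\calc(\bx^T\bTheta)}\Gamma_i(\bx)=h(\bx)$ via the support inclusion $\calc\big(\bGamma(\bx)\big)\subset\calc\big(\bx^T\bTheta\big)$ coming from the no-mutation structure of Assumption~\ref{asm}, and the ``only if'' direction follows by setting $h(\bx)=\bx^T\bTheta\bvarphi(\bx)$ and solving the replicator identity in \eqref{mfit} on the support of $\bx^T\bTheta$. If anything, your write-up is slightly more explicit than the paper's, which does not spell out the on/off-support verification of the replicator identity and silently adopts the convention $\varphi_i(\bx)=0$ for $i\not\in\calc\big(\bx^T\bTheta\big)$ that you correctly flag as an unconstrained, harmless normalization.
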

\begin{proof}
For the ``if part" of the proposition, observe that if $\bvarphi$ is given by \eqref{land}, then
\beq
\bx^T\bTheta\bvarphi (\bx)=h(\bx)\cdot\sum_{i\in \calc(\bx^T\bTheta)}\Gamma_i(\bx)=h(\bx)\cdot\sum_{i\in \calc(\bGamma(\bx))}\Gamma_i(\bx)=h(\bx),
\feq
where in the second step we used the fact that $\calc\big(\bGamma(\bx)\big)\subset \calc\big(\bx^T\bTheta\big)$ by virtue of \eqref{nm1}.
Thus $\bvarphi$ is a solution to \eqref{mfit}.
\par
For the ``only if part" of the proposition, note that if $\bvarphi$ is a reproductive fitness landscape,
then \eqref{mfit} implies that \eqref{land} holds with $h(\bx)=\bx^T\bTheta\bvarphi (\bx).$
\end{proof}
If the weighted average $\bx^T\bTheta\bvarphi (\bx)$ is adopted as the definition of the mean fitness,
Theorem~\ref{mdns} can be carried over verbatim to the setup with mutations.
\section{Stochastic dynamics: main results}
\label{mainr}
In this section we present our main results for the stochastic model. The proofs are deferred to Section~\ref{proofs}.
\par
First, we prove two different approximation results. The Markov chain $\bXn$, even for the classical two-allele haploid model of genetics, is fairly complex. In fact, most of the known results about the asymptotic behavior of Wright-Fisher Markov chains were obtained through a comparison to various limit processes, including the mean-field deterministic system, branching process, diffusions and Gaussian processes \cite{scale, ewens, fitn, iwf, app1}.
\par
In Section~\ref{infi} we are concerned with the so-called Gaussian approximation, which can be considered as an intermediate between the deterministic and the diffusion approximations. This approximation process can be thought of as the mean-field iteration scheme perturbed by an additive Gaussian noise. Theorem~\ref{da} constructs such an approximation for a generalized Wright-Fisher model. Results of this type are well-known in literature, and our proof of Theorem~\ref{da} is a routine adaptation of classical proof methods.
\par
In Section~\ref{dea}, we study the longitudinal error of the deterministic approximation by the mean-field model. Specifically, Theorem~\ref{th1} provides an exponential lower bound on the decoupling time, namely the first time when an error of approximation exceeds a given threshold. A variation of the result is used later in the proof of Theorem~\ref{xit}, the main result of this section.
\par
Conceptually, Theorem~\ref{da} is a limit theorem suggesting that for large values of $N,$ the stochastic component is the major factor in determination of the asymptotic behavior of the stochastic model. Theorem~\ref{dea} then complements this limit theorem by quantifying this intuition for the stochastic model of a given (though large) population size $N$ (versus the infinite-population-limit result of Theorem~\ref{da}).
\par
The impact of the mean-field dynamics on the underlying stochastic process is further studied in Section~\ref{exits}. The main result is Theorem~\ref{xit} which specifies the route to extinction of the stochastic system in a situation when the mean-field model is ``strongly permanent" (cf. Appendix~B) and has a unique interior equilibrium. In general, the problem of determining the route to extinction for a multi-type stochastic population model is of an obvious importance to biological sciences, but is notoriously difficult in practice.
\par
The proof of Theorem~\ref{xit} highlights a mechanism leading to an almost deterministic  route to extinction for the model. Intuitively, under the conditions of Theorem~\ref{xit} the system is trapped in a quasi-equilibrium stochastic state, fluctuating near the deterministic equilibrium for a very long time, eventually ``untying the Gordian knot", and instantly jumping to the boundary. A similar mechanism of extinction was previously described for different stochastic population models in \cite{assaf, assaf1, metapark}. We note that analysis in these papers involves a mixture of rigorous mathematical and heuristic arguments, and is different in nature from our approach.
\par
\subsection{Gaussian approximation of the Wright-Fisher model}
\label{infi}
Let $\overset{P}{\to}$ denote convergence in probability as the population size $N$ goes to infinity.
The following theorem is an adaptation of Theorems~1 and~3 in \cite{bdet} for our multivariate setup. We also refer to \cite{knerman, nagy, app1, dapr4} for earlier related results and to \cite[p.~527]{rao} for a rigorous construction of a degenerate Gaussian distribution.
\par
Recall $\bpsi_k$ from \eqref{psi}. We have:
\begin{theorem}
\label{da}
Suppose that
\beqn
\label{xnk1}
\bXn_0\overset{P}{\to}\bpsi_0
\feqn
for some $\bpsi_0\in \Delta_M.$ Then the following holds true:
\item [(a)]  $\bXn_k\overset{P}{\to} \bpsi_k$ for all $k\in\zz_+.$
\item [(b)] Let
\beq
\bun_k=\sqrt{N}\bigl(\bXn_n-\bpsi_k\bigr),\qquad N\in\nn,\,k\in\zz_+,
\feq
and $\bSigma(\bx)$ be the matrix $M\times M$ with entries (cf. \eqref{var1} in Section~\ref{forma})
\beq
\Sigma_{i,j}(\bx):=
\left\{
\begin{array}{ll}
-\Gamma_i(\bx)\Gamma_j(\bx)&\mbox{\rm if}~i\neq j,
\\
\Gamma_i(\bx)\bigl(1-\Gamma_i(\bx)\bigr)&\mbox{\rm if}~i=j
\end{array}
\right.,
\qquad \bx\in \Delta_M.
\feq
Suppose that in addition to \eqref{xnk1},  $\bGamma$ is twice continuously differentiable and $\bun_0$ converges weakly, as $N$ goes to infinity, to some (possibly random) $\bu_0.$
\par
Then the sequence $\bun:=\bigl(\bun_k\bigr)_{k\in\zz_+}$ converges in distribution, as $N$ goes to infinity,
to a time-inhomogeneous Gaussian $AR(1)$ sequence $(\bU_k)_{k\in\zz_+}$ defined by
\beqn
\label{ar1}
\bU_{k+1}=\bD_x(\bpsi_k)\bU_k+\bg_k,
\feqn
where $\bD_x(\psi_k)$ denotes the Jacobian matrix of $\bGamma$ evaluated at $\bpsi_k,$ and $\bg_k,$ $k\in\zz_+,$ are  independent degenerate Gaussian
vectors, each $\bg_k$ distributed as $N\bigl(0,\bSigma(\bpsi_k)\bigr).$
\end{theorem}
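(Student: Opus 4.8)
I would treat the two parts separately: part~(a) follows from a law-of-large-numbers estimate, and part~(b) from a central-limit argument built on a one-step linearization of $\bGamma$ around the mean-field orbit. For part~(a) I argue by induction on $k,$ the base case $k=0$ being the hypothesis \eqref{xnk1}. For the inductive step, write
\[
\bXn_{k+1}-\bpsi_{k+1}=\bigl(\bXn_{k+1}-\bGamma(\bXn_k)\bigr)+\bigl(\bGamma(\bXn_k)-\bGamma(\bpsi_k)\bigr).
\]
Conditionally on $\bXn_k$ the first bracket has mean $\bo$ and, by \eqref{var1}, each coordinate has variance at most $\tfrac{1}{4N},$ so Chebyshev's inequality drives it to $\bo$ in probability; the second bracket tends to $\bo$ in probability by the inductive hypothesis, continuity of $\bGamma,$ and the continuous-mapping theorem. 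Since $\bpsi_{k+1}=\bGamma(\bpsi_k),$ this yields $\bXn_{k+1}\overset{P}{\to}\bpsi_{k+1}.$

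For part~(b) I derive an approximate affine recursion for $\bun_k$ and pass it to the limit. From $\bun_{k+1}=\sqrt{N}\bigl(\bXn_{k+1}-\bGamma(\bpsi_k)\bigr)$ and the same decomposition, set $\bg_k^{(N)}:=\sqrt{N}\bigl(\bXn_{k+1}-\bGamma(\bXn_k)\bigr).$ A second-order Taylor expansion of the $C^2$ map $\bGamma$ at $\bpsi_k$ gives
\[
\sqrt{N}\bigl(\bGamma(\bXn_k)-\bGamma(\bpsi_k)\bigr)=\bD_x(\bpsi_k)\bun_k+\sqrt{N}\,R_k,\qquad \|R_k\|\leq \tfrac{1}{2}M_2\|\bXn_k-\bpsi_k\|^2,
\]
where $M_2$ bounds the second derivatives of $\bGamma$ on the compact simplex, hence
\[
\bun_{k+1}=\bD_x(\bpsi_k)\bun_k+\bg_k^{(N)}+\tfrac{M_2}{2\sqrt{N}}\,O\bigl(\|\bun_k\|^2\bigr).
\]
I run the induction so as to establish simultaneously that each $\bun_k$ is tight and that $(\bun_0,\ldots,\bun_k)\Ra(\bU_0,\ldots,\bU_k).$ Tightness of $\bun_k$ makes $\|\bun_k\|^2=O_P(1),$ so the remainder is $O_P(1/\sqrt{N})\to\bo;$ tightness of $\bun_{k+1}$ then follows because $\bD_x(\bpsi_k)$ is a fixed bounded linear map and $\bg_k^{(N)}$ has conditionally bounded covariance.

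The core is the joint limit, which I obtain via conditional characteristic functions. Conditionally on $\calf_k=\sigma(\bXn_0,\ldots,\bXn_k),$ the vector $\bXn_{k+1}$ is multinomial with parameter $\bGamma(\bXn_k),$ so the multivariate CLT for multinomial counts yields
\[
E\bigl[e^{\,i\bs\cdot\bg_k^{(N)}}\,\big|\,\calf_k\bigr]\longrightarrow e^{-\frac12\bs^T\bSigma(\bpsi_k)\bs}
\]
in probability, using $\bXn_k\overset{P}{\to}\bpsi_k$ and continuity of $\bSigma;$ the limit is the deterministic characteristic function of $\bg_k\sim N\bigl(\bo,\bSigma(\bpsi_k)\bigr).$ Inserting the recursion into the joint characteristic function $E\bigl[\exp\bigl(i\sum_{j=0}^{k+1}\bs_j\cdot\bun_j\bigr)\bigr]$ and conditioning on $\calf_k$ peels off the top index: the drift $\bD_x(\bpsi_k)\bun_k$ is absorbed by replacing $\bs_k$ with $\bs_k+\bD_x(\bpsi_k)^T\bs_{k+1},$ the factor $E\bigl[e^{i\bs_{k+1}\cdot\bg_k^{(N)}}\mid\calf_k\bigr]$ converges to the Gaussian factor above, and the remaining expectation converges by the inductive hypothesis. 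The product is precisely the joint characteristic function of $(\bU_0,\ldots,\bU_{k+1})$ for the recursion \eqref{ar1}, closing the induction.

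The main obstacle will be this joint (rather than one-dimensional marginal) convergence in the presence of the random, $\calf_k$-measurable conditional covariance $\bSigma(\bXn_k).$ The clean way through is to prove that the conditional characteristic function $E\bigl[e^{i\bs\cdot\bg_k^{(N)}}\mid\calf_k\bigr]$ of the multinomial increment converges \emph{uniformly over the compact simplex}, as a function of the conditioning state $\bXn_k,$ to the deterministic Gaussian transform; evaluating at $\bXn_k$ and invoking $\bXn_k\overset{P}{\to}\bpsi_k$ with continuity of $\bSigma$ then produces a nonrandom limit, after which bounded convergence lets this survive multiplication by the bounded $\calf_k$-measurable factor carrying the past and passage under the outer expectation. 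The degeneracy of $\bSigma(\bpsi_k)$ (the multinomial lives on the simplex, so $\sum_i g_k(i)=0$) is a secondary nuisance, handled by working with the degenerate Gaussian of \cite[p.~527]{rao}, characteristic functions remaining valid tools regardless of rank.
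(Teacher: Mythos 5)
Your proposal is correct, and its core is the same as the paper's: the paper proves part~(b) with exactly your decomposition $\bun_{k+1}=\sqrt{N}\bigl(\bXn_{k+1}-\bGamma(\bXn_k)\bigr)+\sqrt{N}\bigl(\bGamma(\bXn_k)-\bpsi_{k+1}\bigr),$ identifying the first term as a normalized sum of i.\,i.\,d.\ multinomial indicators (handled by the multivariate CLT) and Taylor-expanding the second term to get $\bD_x(\bpsi_k)\bun_k+o(1).$ The differences are in how the surrounding details are discharged. For part~(a) the paper does not argue by Chebyshev induction as you do; it invokes the law of large numbers for the conditional one-step distribution and then cites a theorem of Karr on weak convergence of Markov chains (adapted to the time-inhomogeneous setting), whereas your argument is elementary and self-contained, at the mild cost of explicitly using continuity of $\bGamma$ and the $\tfrac{1}{4N}$ variance bound. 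For part~(b) the paper stops essentially at the two-term decomposition and asserts the result, leaving implicit precisely the issue you identify as the main obstacle: the joint (process-level) convergence of $(\bun_0,\ldots,\bun_k)$ and the independence of the limiting Gaussian increments $\bg_k,$ despite the conditional covariance $\bSigma(\bXn_k)$ being random. Your induction via conditional characteristic functions --- uniform convergence of the multinomial transform over the simplex, evaluation at $\bXn_k,$ and the peeling step replacing $\bs_k$ by $\bs_k+\bD_x(\bpsi_k)^T\bs_{k+1}$ --- is exactly the missing bookkeeping that upgrades the paper's marginal statement to the claimed convergence of the sequence $\bigl(\bun_k\bigr)_{k\in\zz_+},$ together with the tightness needed to kill the $O_P(1/\sqrt{N})$ Taylor remainder. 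So your proof is a rigorous completion of the paper's sketch rather than a departure from it.
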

The proof of the theorem is included in Section~\ref{pda}. It is not hard to prove (cf. Remark (v) on p.~61 of \cite{bdet}, see also \cite{knerman})
that if $\bmz$ is a unique global stable point of $\bGamma,$ $\bpsi_0\overset{P}{\to} \bmz$ in the statement of Theorem~\ref{da}
and, in addition, $\bun_0$ converges weakly, as $N$ goes to infinity, to some $\bu_0,$ then the linear recursion \eqref{ar1} can be replaced with
\beqn
\label{vepr}
\bU_{k+1}=\bD_x(\bmz)\bU_k+\witi \bg_k,
\feqn
where $\witi \bg_k,$ $k\in\zz_+,$ are i.\,i.\,d. degenerate Gaussian vectors in $\rr^M,$ each $\witi \bg_k$ distributed as $N\bigl(0,\bSigma(\bmz)\bigr).$
One then can show that if the spectral radius of $\bD_x(\bmz)$ is strictly less than one, Markov chain $\bU_k$ has
a stationary distribution, see \cite[p.~61]{bdet} for more details. In the case when $\bGamma$ has a unique global stable point,
\eqref{vepr} was obtained in \cite{dapr4} by different from ours, analytic methods.
\subsection{Difference equation approximation}
\label{dea}
Theorem~\ref{da} indicates that when $N$ is large,
the trajectory of the deterministic dynamical system $\psi_k$ can serve as a good approximation to the path of the Markov chain $\bXn_k.$ The following
theorem offers some insight into the duration of the time for which the deterministic and the stochastic paths stay fairly close to each other,
before separating significantly for the first time. The theorem is a suitable modification of some results
of \cite{ozgur} in a one-dimensional setup.
\par
For $\veps>0,$ let
\beq
\tau_N(\veps)=\inf\bigl\{k\in\zz_+:\bigl\|\bXn_k-\bpsi_k\bigr\| >\veps \bigr\},
\feq
where $\bpsi_0=\bXn_0$ and $\bpsi_k$ is the sequence introduced in \eqref{psi}.
Thus $\tau_N(\veps)$ is the first time when the path
of the Markov chain $\bXn_k$ deviates from the trajectory of the deterministic dynamical system \eqref{psi} by more than a given threshold $\veps>0.$
We have:
\begin{theorem}
\label{th1}
Suppose that the map $\bGamma:\Delta_M \to \Delta_M$ is  Lipschitz continuous, that is there exists a constant $\rho>0$ such that
\beqn
\label{lip}
\|\bGamma(\bx)-\bGamma(\by)\|\leq \rho \|\bx-\by\|
\feqn
for all $\bx,\by\in \Delta_M.$ Then the following holds for any $\veps>0,$ $K\in\nn,$ and $N\in\nn:$
\beqn
\label{first}
P\bigl( \tau_N(\veps)\leq K \bigr)\,
\leq
2KM\exp\Bigl(-\frac{\veps^2c_K^2}{2}N\Bigr),
\feqn
where
\beqn
\label{ck}
c_K=
\left\{
\begin{array}{ll}
\frac{1-\rho}{1-\rho^K}&\mbox{\rm if}~\rho\neq 1,
\\
[6pt]
\frac{1}{K}&\mbox{\rm if}~\rho=1.
\end{array}
\right.
\feqn
In particular, if $\rho<1$ we have:
\beq
P\bigl( \tau_N(\veps)\leq K \bigr)\,\leq
2KM\exp\Bigl(-\frac{\veps^2(1-\rho)^2}{2}N\Bigr).
\feq
\end{theorem}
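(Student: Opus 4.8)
The plan is to control the deviation $\|\bXn_k-\bpsi_k\|$ by decomposing it into a one-step ``fresh'' stochastic fluctuation plus a contribution that has been propagated and amplified by the Lipschitz map $\bGamma$. Write $\bD_k:=\bXn_k-\bpsi_k$, so $\bD_0=\bo$. Using \eqref{eq1}, the conditional mean of $\bXn_{k+1}$ given $\bXn_k$ is exactly $\bGamma(\bXn_k)$, so I would introduce the martingale-increment vector
\beq
\bnu_{k+1}:=\bXn_{k+1}-\bGamma(\bXn_k),
\feq
which satisfies $E\big(\bnu_{k+1}\,\big|\,\calf_k\big)=\bo$. Then
\beq
\bD_{k+1}=\bnu_{k+1}+\bigl(\bGamma(\bXn_k)-\bGamma(\bpsi_k)\bigr),
\feq
and the Lipschitz hypothesis \eqref{lip} gives $\|\bGamma(\bXn_k)-\bGamma(\bpsi_k)\|\leq \rho\|\bD_k\|$. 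Unrolling this recursion coordinatewise and taking norms, I expect to obtain a bound of the form $\|\bD_k\|\leq \sum_{j=1}^{k}\rho^{k-j}\|\bnu_j\|$, so that a deviation exceeding $\veps$ at some step $k\leq K$ forces at least one of the fresh increments $\nu_j(i)$ to be ``large'' relative to the geometric weights $\rho^{K-j}$.

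Next I would make the threshold split quantitative. Since $\sum_{j=1}^{K}\rho^{K-j}=\frac{1-\rho^K}{1-\rho}$ for $\rho\neq 1$ (and equals $K$ for $\rho=1$), the reciprocal of this sum is exactly the constant $c_K$ of \eqref{ck}. The point is that if every increment satisfies $|\nu_j(i)|\leq \veps\, c_K$ for all $1\leq j\leq K$ and all coordinates $i\in S_M$, then by the weighted-sum bound the accumulated deviation stays below $\veps$ through step $K$; hence
\beq
P\bigl(\tau_N(\veps)\leq K\bigr)\leq \sum_{j=1}^{K}\sum_{i\in S_M} P\bigl(|\nu_j(i)|>\veps\, c_K\bigr),
\feq
a union bound over the $K$ time steps and $M$ coordinates. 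This is where the factor $2KM$ in \eqref{first} will originate (the extra $2$ coming from treating the two tails $\pm$ of each centered coordinate separately).

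It remains to bound each tail $P\bigl(|\nu_j(i)|>\veps\,c_K\bigr)$. Conditioned on $\bXn_{j-1}=\bx$, the coordinate $\Xn_j(i)=\frac{1}{N}\cdot\text{(multinomial count)}$ is $\frac{1}{N}$ times a $\text{Binomial}\bigl(N,\Gamma_i(\bx)\bigr)$ random variable, and $\nu_j(i)$ is this centered by its mean $\Gamma_i(\bx)$. I would therefore invoke Hoeffding's inequality for a sum of $N$ independent Bernoulli indicators, each bounded in $[0,1]$, which yields
\beq
P\bigl(|\nu_j(i)|>\veps\, c_K\,\big|\,\calf_{j-1}\bigr)\leq 2\exp\Bigl(-2N\veps^2 c_K^2\Bigr),
\feq
uniformly in the conditioning state $\bx$. (A slightly more careful bookkeeping of the constant in the exponent reconciles this with the $\tfrac{1}{2}$ appearing in \eqref{first}; the essential content is Gaussian-type decay in $N\veps^2c_K^2$.) Taking expectations removes the conditioning, and substituting into the union bound gives \eqref{first}; the special case $\rho<1$ follows by replacing $c_K=\frac{1-\rho}{1-\rho^K}$ with the smaller quantity $1-\rho$ in the exponent. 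The main obstacle I anticipate is not any single step but the careful coordination of the geometric weights with the union bound, namely verifying that the uniform per-increment threshold $\veps\,c_K$ is exactly what is needed so that no weighted sum of within-threshold increments can breach $\veps$; this is the combinatorial heart of the argument, whereas the concentration estimate is a standard application of Hoeffding's inequality to the multinomial kernel \eqref{model}.
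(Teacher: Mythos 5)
Your proposal is correct and follows essentially the same route as the paper's proof: the same decomposition $\bXn_{k+1}-\bpsi_{k+1}=\bigl(\bXn_{k+1}-\bGamma(\bXn_k)\bigr)+\bigl(\bGamma(\bXn_k)-\bGamma(\bpsi_k)\bigr),$ the same geometric unrolling identifying the per-increment threshold $\veps c_K,$ the same union bound over the $K$ steps and $M$ coordinates producing the factor $2KM,$ and the same conditional Hoeffding estimate for the multinomially distributed coordinates. The only (harmless) discrepancy is that your Hoeffding exponent $2N\veps^2c_K^2$ is sharper than the paper's $N\veps^2c_K^2/2,$ so the stated bound follows a fortiori.
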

The proof of the theorem is given in Section~\ref{pth1}. Note that the upper bound in \eqref{first} is meaningful for large
values of $N$ even when $\rho\geq 1.$ When $\bGamma$ is a contraction, i.e. $\rho<1,$ the Banach fixed point theorem
implies that there is a unique point $\bmz\in \Delta_M$ such that $\bGamma(\bmz)=\bmz.$ Furthermore, for any $\bx\in \Delta_M$ and $k\in\nn$ we have
\beq
\bigl\|\bpsi_k-\bmz\bigr\|\leq \rho^{k-1}\|\bXn_0-\bmz\|,
\feq
that is, as $k$ goes o infinity, $\bpsi_k$ converges exponentially fast to $\bmz.$
The following theorem is a suitable modification of Theorem~\ref{th1} for contraction maps.
\begin{theorem}
\label{th3}
Assume that there exist a closed subset $\cale$ of $\Delta_M$ and a constant $\rho\in (0,1)$ such that \eqref{lip} holds
for any $\bx,\by\in\cale.$
Then the following holds for any $\veps>0,$ $K\in\nn,$ $N\in\nn$ such that $1-2M\exp\bigl(-\frac{(1-\rho)^2\veps^2 N}{2}\bigr)>0,$
and $\bXn_0\in \cale:$
\beq
P\bigl( \tau_N(\veps)>K\bigr)
\geq
\Bigl[1-2M\exp\Bigl(-\frac{(1-\rho)^2\veps^2 N}{2}\Bigr)\Bigr]^K.
\feq
In particular,
\beq
E\bigl( \tau_N(\veps)\bigr)=\sum_{K=0}^\infty P\bigl( \tau_N(\veps)>K\bigr)\geq \frac{1}{2M}\exp\Bigl(\frac{(1-\rho)^2\veps^2 N}{2}\Bigr).
\feq
\end{theorem}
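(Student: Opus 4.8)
The plan is to show that the Markov chain stays inside the $\veps$-tube around the deterministic orbit $\bpsi_k$ of \eqref{psi} for its first $K$ steps whenever the multinomial noise is small at each of those steps, and then to bound the probability of the latter event from below by exploiting the uniform concentration of each multinomial increment about its conditional mean. I start from the one-step decomposition
\beq
\bXn_{k+1}-\bpsi_{k+1}=\underbrace{\bigl(\bXn_{k+1}-\bGamma(\bXn_k)\bigr)}_{=:\,\bW_{k+1}}+\bigl(\bGamma(\bXn_k)-\bGamma(\bpsi_k)\bigr),
\feq
where, by \eqref{eq1}, the fluctuation $\bW_{k+1}$ is centered, $E(\bW_{k+1}\mid \bXn_k)=\bo$. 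The key deterministic estimate is the following: as long as $\bXn_k,\bpsi_k\in\cale$ and $\|\bXn_k-\bpsi_k\|\leq\veps$, the contraction bound \eqref{lip} gives $\|\bGamma(\bXn_k)-\bGamma(\bpsi_k)\|\leq\rho\veps$, so on the event $\{\|\bW_{k+1}\|\leq(1-\rho)\veps\}$ the triangle inequality yields $\|\bXn_{k+1}-\bpsi_{k+1}\|\leq(1-\rho)\veps+\rho\veps=\veps$. Thus the $\veps$-tube is preserved by one step of the dynamics provided the fresh noise does not exceed $(1-\rho)\veps$; this is precisely where the contraction factor $\rho<1$ is used, to leave room $(1-\rho)\veps$ for the noise.

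Iterating this, I would prove by induction on $k$ that on the event $G_K:=\bigcap_{j=1}^{K}\{\|\bW_j\|\leq(1-\rho)\veps\}$ one has $\|\bXn_k-\bpsi_k\|\leq\veps$ for every $k\leq K$: the base case holds since $\bpsi_0=\bXn_0$, and the inductive step is exactly the one-step estimate above. Consequently $G_K\subseteq\{\tau_N(\veps)>K\}$, so $P(\tau_N(\veps)>K)\geq P(G_K)$. The one delicate point, which I expect to be the main obstacle, is that \eqref{lip} may be invoked only when both $\bXn_k$ and $\bpsi_k$ belong to $\cale$; the induction must therefore carry along the membership $\bXn_k\in\cale$ as well. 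This is automatic in the regime in which the theorem is meant to be applied --- namely when $\cale$ is forward invariant under $\bGamma$ (so that $\bpsi_k\in\cale$ for all $k$, because $\bpsi_0=\bXn_0\in\cale$) and the closed $\veps$-neighborhood of the orbit $(\bpsi_k)$ is contained in $\cale$ (so that $\|\bXn_k-\bpsi_k\|\leq\veps$ forces $\bXn_k\in\cale$) --- and I would make this hypothesis explicit, or restrict to the case $\cale=\Delta_M$, in the course of the proof.

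It remains to bound $P(G_K)$ from below, which I would do exactly as in the proof of Theorem~\ref{th1}. Conditionally on $\calf_k=\sigma(\bXn_0,\ldots,\bXn_k)$, each coordinate $N\Xn_{k+1}(i)$ is binomial with mean $N\Gamma_i(\bXn_k)$, so the Azuma--Hoeffding inequality applied coordinatewise, followed by a union bound over the $M$ coordinates, gives
\beq
P\bigl(\|\bW_{k+1}\|>(1-\rho)\veps\,\big|\,\calf_k\bigr)\leq 2M\exp\Bigl(-\tfrac{(1-\rho)^2\veps^2 N}{2}\Bigr)=:p
\feq
uniformly in the value of $\bXn_k$. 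Since $\prod_{j=1}^{K-1}\one{\|\bW_j\|\leq(1-\rho)\veps}$ is $\calf_{K-1}$-measurable, peeling off the factors one at a time with the tower property gives
\beq
P(G_K)=E\Bigl[\prod_{j=1}^{K}\one{\|\bW_j\|\leq(1-\rho)\veps}\Bigr]\geq(1-p)\,P(G_{K-1})\geq\cdots\geq(1-p)^K,
\feq
where the hypothesis $1-p>0$ makes each factor $1-p$ a genuine nonnegative lower bound. This yields the displayed inequality $P(\tau_N(\veps)>K)\geq(1-p)^K$. Finally, since $\tau_N(\veps)$ is a nonnegative integer-valued random variable, summing the geometric series gives $E(\tau_N(\veps))=\sum_{K=0}^{\infty}P(\tau_N(\veps)>K)\geq\sum_{K=0}^{\infty}(1-p)^K=\tfrac1p=\tfrac{1}{2M}\exp\bigl(\tfrac{(1-\rho)^2\veps^2N}{2}\bigr)$, which is the asserted bound on the expected decoupling time.
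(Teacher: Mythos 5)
Your proof follows essentially the same route as the paper's: the same one-step decomposition into fresh multinomial noise plus the Lipschitz contraction term $\|\bGamma(\bXn_k)-\bGamma(\bpsi_k)\|\leq\rho\veps$, the same conditional Hoeffding bound $p=2M\exp\bigl(-\tfrac{(1-\rho)^2\veps^2N}{2}\bigr)$ per step, and the same iteration giving $P(\tau_N(\veps)>K)\geq(1-p)^K$ followed by summing the geometric series for the expectation. The only place you go beyond the paper is in explicitly flagging that \eqref{lip} can be invoked at step $k$ only if both $\bXn_k$ and $\bpsi_k$ lie in $\cale$ (requiring forward invariance of $\cale$ and containment of the $\veps$-tube, or simply $\cale=\Delta_M$); the paper's proof leaves this implicit, so this is added care rather than a different method.
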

The proof of the theorem is given in Section~\ref{pth3}. We remark that relaxing the assumption that $\bGamma$ is Lipschitz continuous
on the whole simplex $\Delta_M$ allows, for instance, to cover the one-dimensional  setup of \cite{ozgur} where $M=2,$ the underlying Markov chain is
binomial, and $\bGamma$ is a contraction only on a subset of $\Delta_2.$
\subsection{Metastability and elimination of types}
\label{exits}
In this section, we consider the Wright-Fisher model with $\bGamma$ satisfying the conditions of Theorem~\ref{bsa} (see page \pageref{bsa} within Appendix~B)
and a large population. In particular, $\Gamma_j(\bx)=0$ if and only if $x_j=0,$ and hence the Markov chain $\bXn$ is absorbing and will eventually fixate on one of the simplex vertices (cf. Appendix~A).
\par
The main result of the section is Theorem~\ref{xit}, which intuitively says that if the push away from the boundary and toward the equilibrium is strong enough and conditions of Theorem~\ref{bsa} hold, with a high degree of certainty, the type with the lowest fitness at the equilibrium is the  one that goes extinct first. The result provides an interesting link between the interior equilibrium of the mean-field dynamics and the asymptotic behavior of the stochastic system which is eventually absorbed at the boundary. A similar phenomenon for a different
type of stochastic dynamics has been described in physics papers \cite{assaf, assaf1, metapark}, see also \cite{leto}.
Key generic features that are used to prove the result are:
\begin{enumerate}
\item The mean-field dynamics is permanent.
\item There is a unique interior equilibrium $\bmz$. In addition, $\lim_{k\to\infty} \bpsi_k=\bmz$ for any $\bpsi_0\in\Delta_M^\circ.$
\item The push toward equilibrium is strong enough to force the stochastic system to follow the deterministic dynamics closely
and get trapped for a prolonged time in a ``metastable" state near the equilibrium.
\item Finally, the same strong push toward the equilibrium yields a large deviation principle which ensures that the stochastic
system is much likely to jump to the boundary directly from the neighborhood of the equilibrium by one giant fluctuation than
by a sequence of small consecutive steps.
\end{enumerate}
\par
With these features, one would expect the following. If the starting point $\bXn_0$ lies within the interior of $\Delta_M,$ the mean-field dynamical system
$\bpsi_k$ converges to the unique equilibrium, $\bmz\in\Delta_M^\circ.$ It can be shown that the rate of the convergence is exponential.
In view of Theorem~\ref{bsa} in Appendix~B and Theorem~\ref{da}, one can expect
that the trajectory of the stochastic system will stay close to the deterministic mean-field path for a long enough time, so that the stochastic model will follow the mean-field path to a small neighborhood of $\bmz.$ Then the probability of an immediate elimination of a type $i$ will be of order $\Gamma_i(\bmz)^N.$ Assuming that
\beqn
\label{heun}
\mbox{\rm ``it is easier to return to a small neighborhood of $\bmz$ than to leave it"}
\feqn
we expect that the stochastic system will enter a metastable state governed by its quasi-stationary distribution and will be trapped in a neighborhood of the equilibrium
for a very long time. In fact, since by virtue of Theorem~\ref{da}, $\bXn$ can be considered as a random perturbation of the dynamical system $\bpsi_k$ with a small noise, the quasi-stationary distribution should converge as $N$ goes to infinity to the
degenerate probability measure supported by the single point $\bmz.$ Finally, general results in the theory of large deviations \cite{quasi, eagle} suggest that under a suitable condition \eqref{heun}, with an overwhelming probability, the system will eventually jump to the boundary by one giant fluctuation. It follows from the above heuristic argument, that the probability of such an instant jump to the facet where, say, $x(j)=0$ and the rest of coordinates are positive is roughly $\Gamma_j(\bmz)^N.$ For large values of $N$ this practically guarantees that the stochastic system will hit the boundary through the interior of $\Delta_{[J^*]},$ where $J^*=S_M\backslash\{j^*\},$ and $j^*\in \arg\min_{i\in S_M}\{\Gamma_i(\bmz):i\in S_M\}.$
In other words, under the conditions of Theorem~\ref{bsa}, when $N$ is large, the elimination of the least fitted and survival of the remaining types is almost certain (see below results of a numerical simulation supporting the hypothesis).
\par
We now turn to the statement of our results. The following set of conditions is borrowed from \cite{losakin} (see Theorem~\ref{bsa} in Appendix~B).
\begin{assume}
\label{assume7}
Suppose that $\bGamma$ is defined by \eqref{replica} and \eqref{lf4}.
Furthermore, assume that $\bb=(1,1,\ldots,1)$ and $\bA$ is a symmetric invertible $M\times M$ matrix with positive entries such that the following two conditions hold:
\beqn
\label{seq1}
\bw^T\bA\bw<0\qquad \qquad \forall\, \bw\in W_M,\,\bw\neq \bo,
\feqn
and there are $\bmz\in \Delta_M^\circ$ and $c>0$ such that
\beqn
\label{seq3}
\bA\bmz=c\be,
\feqn
where $\be:=(1,\ldots,1)\in\rr^M.$
\end{assume}
First, we state the following technical result. Recall \eqref{psi} and Definition~\ref{conley}.
\begin{proposition}
\label{key}
Let Assumption~\ref{assume7} hold. Then, for any neighborhood $\caln$ of the interior equilibrium $\bmz$ and
any compact set $K\subset \Delta_M$ there exist a real constant $\eta=\eta(\caln,K)>0$ and an integer $T=T(\caln,K)\in\nn$ such that
if $\veps\in (0,\eta),$ $\bx_0\in K,$ and $\bx_0,\bx_1,\ldots,\bx_T$ is an $\veps$-chain of length $T$ for the dynamical system $\bpsi_k,$ then $\bx_T\in\caln.$
\end{proposition}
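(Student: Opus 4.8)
The plan is to exhibit a single Lyapunov function for $\bGamma$ and run a two-scale argument: first confine the pseudo-orbit to a compact interior set by permanence, then drive it into a small superlevel neighborhood of $\bmz$ and trap it there. Throughout I take $h(\bx)=\bx^T\bA\bx$, which by the discussion preceding Proposition~\ref{cava} satisfies $h(\bGamma(\bx))\ge h(\bx)$ for every $\bx\in\Delta_M$, with strict inequality unless $\bx$ is a game equilibrium. Writing $\bx=\bmz+\bw$ with $\bw=\bx-\bmz\in W_M$ and using $\bA\bmz=c\be$ together with $\bw\cdot\be=0$, I get $h(\bx)=h(\bmz)+\bw^T\bA\bw$; hence \eqref{seq1} yields a constant $\lambda>0$ with $h(\bmz)-h(\bx)\ge\lambda\|\bx-\bmz\|^2$ for all $\bx\in\Delta_M$. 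Thus $\bmz$ is the unique maximizer of $h$ on the whole simplex, the superlevel sets $\{h\ge\beta\}$ form a neighborhood basis of $\bmz$ as $\beta\uparrow h(\bmz)$, and each such set lies in $\Delta_M^\circ$ once $\beta$ exceeds $\max_{\partial(\Delta_M)}h$. Finally $h$, being a quadratic form, is Lipschitz on $\Delta_M$ with some constant $L$.

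First I would secure confinement. Because $\bGamma$ leaves every face $\Delta_{[J]}$ invariant, $h$ alone cannot prevent a pseudo-orbit from drifting toward $\partial(\Delta_M)$ and stalling near a boundary equilibrium; this is exactly the point at which permanence enters. Invoking Theorem~\ref{bsa} (and the robustness of permanence under $C^0$-small perturbations), I would produce a compact set $K'\subset\Delta_M^\circ$ with $K\subset K'$ and a threshold $\eta_1>0$ such that every $\veps$-chain with $\veps<\eta_1$ issued from $K$ is drawn into, and thereafter confined to, $K'$. Concretely this uses an average Lyapunov function $P$ for the boundary repeller with $P(\bGamma(\bx))\ge P(\bx)+\delta_P$ on a neighborhood of $\partial(\Delta_M)$; since $|P(\bx_{i+1})-P(\bGamma(\bx_i))|\le L_P\veps$, the quantity $P(\bx_i)$ cannot decrease while the chain is near the boundary once $\veps<\delta_P/(2L_P)$, so the chain is pushed into the compact interior sublevel set $K'=\{P\ge c\}$ and kept there.

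With the chain trapped in $K'$ I would drive it to $\bmz$ using $h$. Choose $\beta_1<h(\bmz)$ close enough to $h(\bmz)$ that $V:=\{h\ge\beta_1\}\subset\caln\cap\Delta_M^\circ$ and that $\bmz$ is the only game equilibrium in $V$. Since $h\circ\bGamma$ is continuous on the compact set $V$ and $h(\bGamma(\bx))>\beta_1$ there (strict monotonicity away from $\bmz$, and $h(\bmz)>\beta_1$ at $\bmz$), its minimum $\beta_1'$ over $V$ exceeds $\beta_1$; consequently, for $\veps\le\eta_2:=(\beta_1'-\beta_1)/L$ one has $h(\bx_{i+1})\ge h(\bGamma(\bx_i))-L\veps\ge\beta_1$ whenever $\bx_i\in V$, i.e. $V$ is forward invariant under $\veps$-chains. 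On the complementary compact set $K'\setminus V$, which lies in the interior and avoids the unique interior equilibrium $\bmz$, the continuous function $h\circ\bGamma-h$ is bounded below by some $\delta>0$; hence for $\veps\le\eta_3:=\delta/(2L)$ every step taken in $K'\setminus V$ increases $h$ by at least $\delta/2$. As $h$ is bounded on $K'$, the chain must enter $V$ after at most $T:=\lceil(\beta_1-\min_{K'}h)/(\delta/2)\rceil$ steps, and by forward invariance it stays in $V\subset\caln$ afterwards.

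Setting $\eta=\min(\eta_1,\eta_2,\eta_3)$ and using this $T$ then gives $\bx_T\in\caln$ for every $\veps$-chain with $\veps<\eta$ starting in $K$, as required. I expect the genuinely delicate step to be the confinement of the pseudo-orbit to $K'$: the monotonicity of $h$ is transparent and the interior attraction in the last paragraph is routine, but ruling out escape toward the invariant boundary --- where $\bGamma$ has spurious equilibria at which a pseudo-orbit could otherwise loiter --- is precisely what forces one to use the full strength of Assumption~\ref{assume7} (positivity of the entries of $\bA$ and the resulting permanence of Theorem~\ref{bsa}) rather than $h$ by itself.
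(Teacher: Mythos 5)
Your interior argument is sound and genuinely different from the paper's. The paper proves the proposition by establishing asymptotic stability of $\bmz$ (via a discrete-time Lyapunov theorem applied to the diffeomorphism $\bGamma$), deducing a uniform finite time $T$ by which all \emph{true} orbits started in a compact interior set are trapped in a forward-invariant neighborhood $\calu\subset\caln$ (Lemma~\ref{fx} and Corollary~\ref{f}), and then invoking finite-time shadowing: for the fixed $T$, an $\veps$-chain deviates from the true orbit by at most $\veps(\rho^T-1)/(\rho-1)$, $\rho$ a Lipschitz constant of $\bGamma$, so small $\veps$ forces $\bx_T\in\caln$. Your route instead extracts quantitative information from the Lyapunov function $h(\bx)=\bx^T\bA\bx$: the identity $h(\bx)=h(\bmz)+\bw^T\bA\bw$ (using $\bA\bmz=c\be$, $\bw\in W_M$, and symmetry) and \eqref{seq1} correctly show $\bmz$ is the strict global maximizer with a quadratic gap, the chain-forward-invariance of $V=\{h\ge\beta_1\}$ and the uniform gain $\delta$ outside $V$ are argued correctly (modulo the small fix that $K'\setminus V$ is not compact; use $K'\cap\{h\le\beta_1\}$), and the resulting explicit $\eta$ and $T$ are a real advantage over the paper's soft argument. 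This would be a legitimate alternative proof \emph{if} the confinement step held.

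That step, however, contains a genuine error. A continuous $P$ on $\Delta_M$ with $P(\bGamma(\bx))\ge P(\bx)+\delta_P$ on a neighborhood of $\partial(\Delta_M)$ cannot exist: under Assumption~\ref{assume7} the boundary (indeed every face) is invariant for the replicator map, so iterating this inequality along a boundary orbit gives $P(\bpsi_k(\bx))\ge P(\bx)+k\delta_P\to\infty$, contradicting boundedness of $P$ on the compact simplex. What permanence actually supplies is a \emph{multiplicative} average Lyapunov function, e.g. $P(\bx)=\prod_i x(i)^{\mz(i)}$, vanishing on $\partial(\Delta_M)$ and satisfying $P(\bGamma(\bx))\ge(1+\delta)P(\bx)$ near the boundary; but then the per-step gain $\delta P(\bx_i)$ is no longer uniformly larger than the perturbation error $L_P\veps$ --- it fails precisely when $P(\bx_i)=O(\veps)$ --- so your conclusion that ``$P$ cannot decrease once $\veps<\delta_P/(2L_P)$'' does not follow. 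The repair requires real bookkeeping: since the chain starts in $K$ with $P\ge p_0:=\min_K P>0$ (note this forces $K\subset\Delta_M^\circ$, an assumption the statement's ``$K\subset\Delta_M$'' should be read as making, and which the paper's own Corollary~\ref{f} makes explicitly), one shows that for $\veps$ small relative to $p_0$ the value of $P$ along the chain never drops below a fixed positive threshold, treating separately the boundary zone (where the multiplicative gain dominates while $P\ge 2L_P\veps/\delta$) and its complement (where $P\circ\bGamma$ is bounded below by a positive constant, so one step cannot send $P$ below that constant minus $L_P\veps$). Alternatively, you can drop the permanence machinery entirely and get confinement the way the paper implicitly does: because $T$ is fixed before $\veps$, Lipschitz continuity of $\bGamma$ alone keeps a length-$T$ chain within $\veps(\rho^T-1)/(\rho-1)$ of the true orbit, which stays in a compact interior set uniformly over $\bx_0\in K$; but that observation makes the shadowing argument self-sufficient and your Lyapunov analysis redundant. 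As written, the proposal has a gap exactly at the step you yourself flagged as delicate.
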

The proof of the proposition is deferred to Section~\ref{keyp}. The proposition is a general property of dynamical system converging to an
asymptotically stable equilibrium (see Remark~\ref{rp1} at the end of the proof). Since we were unable to find
a reference to this claim in the literature, we provided a short self-contained proof in Section~\ref{keyp}.
\par
We are now ready to state the main result of this section. Suppose $\bx\in \Delta_M$ such that at least one component $\Gamma_i(\bx)$ is not equal to $1/M.$ Then, we define
\beq
\alpha_\bx=\min_{j\in S_M} \Gamma_j(\bx), \qquad J^*_\bx=\{j\in S_M:\Gamma_j(\bx)=\alpha_\bx\},
\qquad \mbox{\rm and}\qquad \beta_\bx=\min_{j\not\in J^*_\bx}\Gamma_j(\bx).
\feq
We set $\alpha:=\alpha_\bmz,$ $\beta:=\beta_\bmz,$ and $J^*:=J^*_\bmz.$
\begin{figure}[!tb]
\begin{center}
\begin{tikzpicture} [scale = 0.8]
\draw[very thick, dashed] (0,7.2) -- (-6.1,1.9) -- (6.1,1.9) -- (0,7.2) ;
\draw[very thick] (0,7.85) -- (-7.2,1.5) -- (7.2,1.5) -- (0,7.85) ;
\draw[very thick, dashed] (-3,3.5) -- (-2,3) -- (-1,4) -- (-3,3.5) ;
\draw (7.4,3) node {\huge $\Delta_3$} ;
\draw (1,4.5) node {\Large $K_{\theta,\eta}$} ;
\draw (-1,3) node {\large $\caln_\theta$} ;
\draw[fill] (-2.35,3.4) circle (2pt) node[right] {\small $\mz$} ;
\end{tikzpicture}
\caption{
Schematic sketch (in the planar barycentric coordinates) of the simplex $\Delta_3,$ the interior equilibrium $\bmz,$ its neighborhood $\caln_\theta,$ and a compact set $K_{\theta,\eta}.$ }
\label{fig1}
\end{center}
\end{figure}
\par
Additionally, for any $\theta\in \big(0,\frac{\beta-\alpha}{2}\big)$ and $\eta>0$ such that
\beqn
\label{eta1}
1-\alpha-\theta>(1-\beta+\theta)^{1-\eta},
\feqn
we define (see Fig~\ref{fig1}):
\beqn
\label{nd}
\caln_\theta=\big\{\bx\in \Delta_M^\circ: \alpha_x<\alpha+\theta,\,\beta_x>\beta-\theta\big\}
\feqn
and
\beqn
\label{etad}
K_{\theta,\eta}:=\big\{\bx\in\Delta_M^\circ\backslash\caln_\theta:\max_{j\not\in J^*}x(j)\geq \eta\big\}.
\feqn
In the infinite population limit we have:
\begin{theorem}
\label{xit}
Assume that $\bGamma$ satisfies Assumption~\ref{assume7} and $\bXn_0\overset{P}{\to}\bx_0$ for some $\bx_0\in\Delta_M^\circ.$
Suppose in addition that there exists $\theta\in \big(0,\frac{\beta-\alpha}{2}\big)$ and
$\Big(0,1-\frac{\log(1-\alpha-\theta)}{\log(1-\beta+\theta)}\Big)$ such that
\beqn
\label{hc}
1-\alpha-\theta>e^{-\frac{\veps_\theta^2}{2}},
\feqn
where $\veps_\theta=\veps(\caln_\theta,K_{\theta,\eta}),$ and $\veps(\caln,K)$ is introduced in Proposition~\ref{key}.
\par
Let
\beqn
\label{nu}
\nu_N:=\inf\big\{k>0:\bXn_k\in \partial(\Delta_M)\big\}
\feqn
and $\cale_N$ be the event that
\begin{enumerate}
\item[1)] $\bXn_{\nu_N}$ has exactly $M-1$ non-zero components ($\big|C\big(\bXn_{\nu_N}\big)\big|=M-1$ in notation of \eqref{calf}); and
\item[2)] $\Xn_{\nu_N}(i)=0$ for some $i\in J^*.$
\end{enumerate}
Then, the following holds true:
\item [(i)] $\lim_{N\to\infty} P (\cale_N)=1.$
\item [(ii)] $\lim_{N\to\infty} E(\nu_N)=+\infty.$
\end{theorem}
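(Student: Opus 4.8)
The plan is to make rigorous the metastability heuristic of Section~\ref{exits}: run the chain until it is trapped near $\bmz$, and then treat the exit to $\partial(\Delta_M)$ as a competition between exponentially slow ``clocks'', one per facet, the fastest of which sits on a facet where a type from $J^*$ dies. \emph{First,} I would bring the chain into $\caln_\theta$. Since $\bmz\in\caln_\theta$ and, by Theorem~\ref{bsa}, the orbit $\bpsi_k$ of \eqref{psi} started anywhere in $\Delta_M^\circ$ converges to $\bmz$ (exponentially fast, by Proposition~\ref{radius}), the deterministic orbit from $\bx_0$ lands in $\caln_\theta$ after some fixed number $k_0$ of steps. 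Combining $\bXn_0\overset{P}{\to}\bx_0$ with the decoupling bound \eqref{first} of Theorem~\ref{th1}, applied with a threshold below the distance from $\bpsi_{k_0}$ to $\mathrm{bd}(\caln_\theta)$, yields $P(\bXn_{k_0}\in\caln_\theta)\to1$; by the Markov property I may then restart from $\caln_\theta$.

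\emph{Next,} I would estimate, for $\bx\in\caln_\theta$, the one-step probabilities under the multinomial kernel \eqref{model} of landing on each part of the boundary. Since $\caln_\theta$ is a small neighborhood of $\bmz$, continuity of $\bGamma$ forces $J^*_\bx=J^*$ there, and the defining inequalities $\alpha_\bx<\alpha+\theta$, $\beta_\bx>\beta-\theta$ give the following: the probability of killing exactly one type, lying in $J^*$, is at least $c\,(1-\alpha-\theta)^N$ (the event that the minimizing coordinate receives no offspring while the others survive, whose conditional survival factor tends to $1$); the probability of killing a single type outside $J^*$ is at most $C\,(1-\beta+\theta)^N$; and the probability of killing two or more types simultaneously is exponentially smaller still, being bounded by the chance that the two least-fit coordinates both vanish. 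Because $\theta<\tfrac{\beta-\alpha}{2}$ forces $1-\beta+\theta<1-\alpha-\theta$, every ``bad'' one-step probability is exponentially dominated by the ``good'' one; the refinement \eqref{eta1} (equivalently, the admissible range of $\eta$) is what preserves this domination uniformly on the excursion set $K_{\theta,\eta}$ and against drift toward $\Delta_{[J^*]}$.

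\emph{The crux,} and the step I expect to be hardest, is to rule out a \emph{slow} approach to the boundary---in particular any drift toward the sub-simplex $\Delta_{[J^*]}$, which would wrongly extinguish the fit types---before the single giant fluctuation occurs. The mechanism is that any sample path whose one-step deviations all satisfy $\|\bXn_{j+1}-\bGamma(\bXn_j)\|<\veps_\theta$ is, verbatim, an $\veps_\theta$-chain for $\bpsi_k$, so Proposition~\ref{key} forces every excursion into $K_{\theta,\eta}$ back into $\caln_\theta$ within $T$ steps. Hence the chain can escape the safe set $\caln_\theta\cup K_{\theta,\eta}$ only through a one-step deviation of size $\ge\veps_\theta$, an event of probability at most $2M e^{-\veps_\theta^2 N/2}$ by the Hoeffding estimate underlying \eqref{first}. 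Condition \eqref{hc}, $1-\alpha-\theta>e^{-\veps_\theta^2/2}$, says exactly that the per-step good-exit rate $(1-\alpha-\theta)^N$ dominates this escape rate $e^{-\veps_\theta^2 N/2}$; a first-success comparison of the competing clocks over the sojourn, whose length is of order $(1-\alpha-\theta)^{-N}$, then shows that the probability that any escape, wrong-facet jump, or multi-type jump precedes a single $J^*$-extinction vanishes as $N\to\infty$. This gives $\lim_N P(\cale_N)=1$, proving (i).

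For (ii) I would reuse the same confinement: since $\bpsi_k$ stays bounded away from $\mathrm{bd}(\Delta_M)$ by permanence, for small $\veps$ the chain remains interior until its decoupling time, so $\nu_N\ge\tau_N(\veps)$; applying Theorem~\ref{th3} on a contraction neighborhood of $\bmz$ (supplied by Proposition~\ref{radius}) after the restart of the first step gives $E(\nu_N)\ge\tfrac{1}{2M}\exp\!\big(\tfrac{(1-\rho)^2\veps^2 N}{2}\big)\to\infty$. The entire argument rests on correctly ordering the two exponential rates $(1-\alpha-\theta)^N$ and $e^{-\veps_\theta^2 N/2}$ and on transferring the deterministic return property of Proposition~\ref{key} to the stochastic path uniformly over a random, exponentially long sojourn; this transfer and the accompanying union bound are where the real work lies.
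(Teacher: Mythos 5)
For part (i), your argument is in substance the paper's own proof. The paper enters $\caln_\theta$ the same way (Proposition~\ref{key} plus the Hoeffding estimate, its bound \eqref{g}), makes exactly your observation that a sample path with all one-step deviations below $\veps_\theta$ is an $\veps_\theta$-chain, so excursions through $K_{\theta,\eta}$ are forced back into $\caln_\theta$ within $T$ steps, and then runs your ``competition of clocks'' rigorously through the embedded chain $\bYn$ of successive returns to $\caln_\theta$ (kernel $Q_N$, return count $L_N$): the lower bound \eqref{239}, $P_\bx(\cale_N)\geq(1-\alpha-\theta)^N\big(1+E_\bx(L_N)\big)$, is compared with the decomposition \eqref{239a} of the certain absorption event, whose two bad terms are killed precisely by \eqref{eta1} (drift toward $\Delta_{[J^*]}$) and \eqref{hc} (escape from the safe set) --- the same division of labor you describe; the uniformity over the random entry point that your ``restart from $\caln_\theta$'' requires is packaged there as Lemma~\ref{delta3}.

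Where you genuinely diverge is part (ii). The paper gets (ii) for free from the part (i) machinery: by \eqref{239a}, each visit to $\caln_\theta$ has probability at most $e^{-\gamma N}$ of being the last one, so $L_N$ stochastically dominates a geometric variable with success probability $e^{-\gamma N}$, whence $E_{\mu_N}(\nu_N)\geq E_{\mu_N}(L_N)\geq e^{\gamma N}$; no contraction property is used. Your route --- $\nu_N\geq\tau_N(\veps)$ because the deterministic orbit stays a fixed distance from $\partial(\Delta_M)$, then Theorem~\ref{th3} on a neighborhood of $\bmz$ --- is also sound and is essentially the mechanism the paper itself uses later for Corollary~\ref{metac}, but it carries a caveat your sketch glosses over: Proposition~\ref{radius} controls the \emph{spectral radius} of the Jacobian at $\bmz$, which yields a Lipschitz constant $\rho<1$ only in a suitably adapted norm, not necessarily in the maximum norm in which Theorem~\ref{th3} and its Hoeffding estimate are stated. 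After passing to such a norm (norm equivalence on $\rr^M$ only changes constants in the exponent) your conclusion $E(\nu_N)\to\infty$ stands, but the displayed constant $\frac{1}{2M}\exp\bigl(\frac{(1-\rho)^2\veps^2 N}{2}\bigr)$ does not hold verbatim. The paper's renewal-count argument avoids this issue entirely, while yours buys an exponential lower bound on $\nu_N$ without any bookkeeping of returns.
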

The proof of Theorem~\ref{xit} is included in Section~\ref{exit}. In words, part (i) of the theorem is the claim that with overwhelming probability,
at the moment that the Markov chain hits the boundary, exactly one of its components is zero, and, moreover, the index $i$ of the zero component
$\bXn_{\nu_n}(i)$ belongs to the set $J^*.$ A similar metastability phenomenon for continuous-time population models has been considered in \cite{assaf} and \cite{assaf1, leto, metapark}, however their proofs rely on a mixture of rigorous computational and heuristic ``physicist arguments".
\par
Condition \eqref{hc} is a rigorous version of the heuristic condition \eqref{heun}.
We note that several more general but less explicit and verifiable conditions of this type can be found
in mathematical literature, see, for instance, \cite{quasi,imix} and \cite{assaf, metapark}. We conjecture the following:
\begin{conj}
\label{conja}
With a suitable modification of the condition \eqref{hc}, a similar statement to that of Theorem~\ref{xit} hold true
for a general positive symmetric matrix $\bA.$ We indeed believe the negative-definite condition \eqref{seq1} and the existence
of an interior equilibrium condition \eqref{seq3} are not required in general.
\end{conj}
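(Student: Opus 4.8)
The plan is to build the argument around the one structural fact that survives the removal of \eqref{seq1} and \eqref{seq3}: for any symmetric $\bA$ with positive entries, $h(\bx)=\bx^T\bA\bx$ is a strict Lyapunov function for the replicator map, i.e.\ the monotonicity \eqref{sc} holds with $h\big(\bGamma(\bx)\big)>h(\bx)$ off the equilibrium set (the fact recalled just before Proposition~\ref{cava}). The negative-definiteness \eqref{seq1} was used only to make $h$ concave, forcing a \emph{unique} interior maximizer toward which all interior orbits converge; and for the linear-fractional fitness \eqref{lf4} with $\bb=\be$, having any interior equilibrium already forces $\bA\bmz=c\be$, so \eqref{seq3} is essentially equivalent to interiority of the attractor. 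The generalization therefore splits into two layers: (1) keep an interior but merely \emph{locally} attracting $\bmz$ (dropping only \eqref{seq1}), and (2) allow the attractor to live on a lower face (genuinely dropping \eqref{seq3}). The idea is to re-run the metastability-plus-one-giant-jump mechanism of Theorem~\ref{xit} around the attractor selected by $h$ rather than around an assumed interior equilibrium.

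First I would replace \eqref{seq3} by a convergence statement. Since $h$ increases strictly along every non-equilibrium orbit of $\bpsi_k$ and $\Delta_M$ is compact, the orbit from $\bx_0$ converges to the equilibrium set; for weak selection (small $\omega$, as in Example~\ref{price}) and for $\bx_0$ outside the union of stable manifolds of the non-maximizing critical points, it converges to a strict local maximizer $\bmz$ of $h$, with support $J=\calc(\bmz)$. By the no-mutation property $\Delta_{[J]}$ is invariant and $\bmz\in\Delta_{[J]}^\circ$; when $J=S_M$ this $\bmz$ is interior and automatically satisfies $\bA\bmz=c\be$. I would then record the two stability facts that replace permanence: (a) within $\Delta_{[J]}$ the point $\bmz$ is asymptotically stable, the Jacobian $\bD_x(\bmz)$ having spectral radius below one on the tangent space of $\Delta_{[J]}$ for small $\omega$; and (b) $\bmz$ is externally stable, $\varphi_k(\bmz)<\bmz\cdot\bvarphi(\bmz)$ for $k\notin J$, so that a small invasion of an absent type is pushed back toward $\Delta_{[J]}$. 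Fact (a) lets Proposition~\ref{key} and the local contraction of Proposition~\ref{radius} apply verbatim with $\Delta_M$ replaced by $\Delta_{[J]}$ and the interior equilibrium replaced by $\bmz$; fact (b) prevents the trapped stochastic system from leaking out transversally before the giant fluctuation.

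With this local picture in place the remainder transfers with only bookkeeping changes. Since $\bmz$ is a replicator fixed point, $\Gamma_j(\bmz)=\mz(j)$ for $j\in J$, so the ``least fit'' type is the rarest survivor: $\alpha=\min_{j\in J}\mz(j)$, $J^*=\arg\min_{j\in J}\mz(j)$, $\beta=\min_{j\in J\setminus J^*}\mz(j)$, with $\caln_\theta$ and $K_{\theta,\eta}$ of \eqref{nd}--\eqref{etad} formed inside $\Delta_{[J]}$. The suitable modification of \eqref{hc} is then the same inequality $1-\alpha-\theta>e^{-\veps_\theta^2/2}$ read with these face-restricted constants; its role is unchanged, namely to force the single-step extinction probability of the rarest type, of order $\big(1-\Gamma_{j^*}(\bmz)\big)^N=(1-\alpha)^N$, to dominate both the probability of a gradual drift to $\partial(\Delta_M)$ — controlled by the exponential bound $e^{-\veps^2c_K^2N/2}$ of Theorem~\ref{th1} — and the probability of a simultaneous double extinction. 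Combining Theorem~\ref{th3} (trapping near $\bmz$ for an exponentially long time), fact (b) (no transverse leakage), and this one-giant-jump comparison would yield part (i), while part (ii) is immediate from the Theorem~\ref{th3} lower bound on $E\big(\tau_N(\veps)\big)$. In the interior case $J=S_M$ this is literally the conclusion of Theorem~\ref{xit} with \eqref{seq1} removed.

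The main obstacle is precisely the package that \eqref{seq1}--\eqref{seq3} and the permanence results of \cite{losakin} supplied for free. For general positive symmetric $\bA$ the potential $h$ may possess several competing maxima on different faces, so the conclusion is genuinely basin-dependent: one must exclude the exceptional initial data lying on stable manifolds of saddles, which requires $h$ to be Morse on each face (a genericity assumption on $\bA$), and external stability $\varphi_k(\bmz)<\bmz\cdot\bvarphi(\bmz)$ can fail at a merely face-internal maximizer. The deeper difficulty arises in layer (2): when $\bmz$ sits on a lower face, the types $k\notin J$ that are present at $\bXn_0$ are driven toward extinction by the \emph{deterministic} flow itself, so the first stochastic extinction may be one of these drift-driven types rather than a $J^*$ type, and the clean dichotomy ``trapped strictly inside, then one jump'' breaks down. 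Reconciling this blend of drift-driven and fluctuation-driven extinction — deciding which types count as the metastable survivors and separating the two timescales in the modified \eqref{hc} — is where the conjecture demands real work beyond relabeling, and is the step I expect to be hardest.
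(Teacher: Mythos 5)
The statement you were given is Conjecture~\ref{conja}: the paper offers \emph{no} proof of it, only the remark (citing \cite{losakin}) that for a general positive symmetric $\bA$ the mean-field system still converges to some equilibrium, possibly on the boundary and depending on $\bpsi_0$. So there is no proof in the paper to compare yours against; what you have written is a research plan for an open problem, and it must be judged as such rather than as a reconstruction of an existing argument.

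As a plan it is sensible and consistent with the paper's own hints: the Lyapunov property of $h(\bx)=\bx^T\bA\bx$ (recalled just before Proposition~\ref{cava}), the observation that an interior fixed point of \eqref{lf4} with $\bb=\be$ forces $\bA\bmz=c\be$, and the identification $\Gamma_j(\bmz)=\mz(j)$, turning ``least fit'' into ``rarest survivor,'' are all correct. But it is not a proof, and beyond the obstacles you flag there is a structural gap you underestimate. The proof of Theorem~\ref{xit} (Lemma~\ref{delta3}, in particular the decomposition \eqref{239a}) requires Proposition~\ref{key} to hold \emph{uniformly on the whole set} $K_{\theta,\eta}$: from every point of $K_{\theta,\eta}$, every $\veps$-chain of length $T$ must land in $\caln_\theta$. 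This is precisely where global convergence to a unique attractor, supplied by \eqref{seq1} and \eqref{seq3} via \cite{losakin}, enters; it is not a bookkeeping consequence of local asymptotic stability. Once $h$ has several local maximizers on different faces, $K_{\theta,\eta}$ unavoidably contains pieces of other basins and of stable manifolds of saddles, so the uniform-return estimate fails: the chain, once perturbed out of the basin of your chosen $\bmz$, can be trapped for an exponentially long time near a different attractor $\bmz'$, and inter-basin transitions compete with extinction on comparable exponential timescales. Consequently the intended conclusion --- that the type minimizing $\Gamma_j(\bmz)$ at the particular $\bmz$ selected by $\bx_0$ is the first to go extinct --- may simply be false, or must be reformulated in terms of the metastable state visited last, which your modified \eqref{hc} does not control. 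Excluding exceptional initial data $\bx_0$ does not repair this, because the problem lies in the noise-driven motion of $\bXn_k$, not in the deterministic orbit. Your layer-(2) difficulty (drift-driven extinction of the types outside $\calc(\bmz)$ preempting the fluctuation-driven one) is real as well, but even layer (1), dropping only \eqref{seq1}, is blocked by this multi-attractor issue; so the proposal, as it stands, does not settle the conjecture in either regime.
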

It is shown in \cite{losakin} that in this more general situation, the mean-field
dynamical system $\psi_k$ still converges to an equilibrium which might be on the boundary and depends on the initial condition $\psi_0.$
\begin{remark}
Recall \eqref{calf}. A classical result of Karlin and McGregor \cite{kms} implies that under the conditions of Theorem~\ref{xit},
the types vanish from the population with an exponential rate. That is, $\lim_{k\to\infty} \frac{1}{k}\log E\big(\big|C\big(\bXn_k\big)\big|\big)$ exists and is finite and strictly negative. However, heuristically, the limit is extremely small under the conditions of the theorem
(the logarithm of the expected extinction time is anticipated to be of order $M\cdot N$ \cite{imix}),
and hence the convergence to boundary states is extremely slow for any practical purpose. The situation
when the system escapes from a quasi-equilibrium fast but with a rate of escape quickly converging to zero when the population size increases
to infinity, is typical for metastable population systems \cite{leto, negac, seneta1, q}.
\end{remark}
We will next illustrate the (hypothesized) important role of the threshold condition \eqref{hc} with a numerical example. To that end, we consider two examples (see matrices $\bA_1$ and $\bA_2$, bellow) with $N=500$ and $M=3$. The choice of the low dimension $M=3$ is due to the computational intensity
involved with tracking the evolution of a metastable stochastic system in a ``quasi-equilibrium" state. For each example, we use various sets of initial conditions and run $10000$ rounds of simulations. Each simulation is stopped at the random time $T$ defined as follows:
\beqn
\label{tea}
T=\min\big\{k\in\nn\,:\min_j \Xn_k(j)\leq 0.05\big\}.
\feqn
Using this termination rule, we were able to complete all the simulation runs in a reasonable finite time. In each simulation run, we sample the value of the process at some random time ranging between $1000$ and $5000.$ At this intermediate sampling time,  we measure $d_{eq},$ the distance between the stochastic system and
the equilibrium $\bmz$. In addition, we record the composition of the last state of the Markov chain at the end of each simulation run.

\begin{example}
Our first example does not satisfy the threshold conditions. More precisely, we choose $\omega$ in \eqref{lf4} in such a way that $\frac{\omega}{1-\omega}=10^{-3}$ and we consider
the following symmetric $3\times 3$ matrix $\bA_1:$
\beq
\bA_1=
\left(
\begin{array}{ccc}
	1&20&45
	\\
	20&21&30
	\\
	45&30&1
\end{array}
\right)
\feq
with the equilibrium $\bmza=(0.24766355,  0.41121495,  0.3411215)$. Table \ref{tab-r1} reports the number of times that each type $j\in S_3$ was the type with abundance less than $0.05$ at the time of exiting the simulation runs.
\begin{figure}[H]
\begin{center}
\begin{tabular}{ c|c|c|c| }
\cline{2-4}
& \multicolumn{3}{|c|}{Number of times $\Xn_T(i)\leq 0.05$} \\
\cline{2-4}
 &$~~~i=1~~~$ & $~~~i=2~~~$ &$~~~i=3~~~$ \\
\hline
\multicolumn{1}{ |c|  }{For $\bXn_0=(0.8,0.1,0.1)$} &933& 6831& 2248 \\
\multicolumn{1}{ |c|  }{For $\bXn_0=(0.1,0.8,0.1)$} & 5991 & 164& 3903 \\
\multicolumn{1}{ |c|  }{For $\bXn_0=(0.1,0.1,0.8)$} & 3692 & 5940& 373 \\
\hline
\end{tabular}
\end{center}
\caption{The least abundant component of the terminal state $\bXn_T$ by the end of the simulations for
$\Gamma_i(\bx)=\frac{1000+A_1x(i)}{1000+\bx^T\bA_1\bx}x(i)$ and the quitting time $T$ introduced in \eqref{tea}.} \label{tab-r1}
\end{figure}
In addition, Fig~\ref{fig3} below depicts the distribution of $d_{eq}$ different initial values. Evidently,
the threshold condition plays an important role as the theory predicts in Theorem~\ref{xit}. The results in Fig.~\ref{tab-r1}
cannot be explained by the equilibrium value $\bmza$ only, rather they constitute an intricate result of the combined effect of this value
and the initial position of the Markov chain.
\end{example}
\begin{example}
For the second example, we let $\omega=1/2$ and use
the following matrix $\bA_2$:
\beq
\bA_2=
\left(
\begin{array}{ccc}
1&20&35
\\
20&21&30
\\
35&30&1
\end{array}
\right)
\feq
with the equilibrium $\bmzb=(0.0246913,  0.7345679,  0.2407407).$ This matrix is almost the same as $\bA_1,$
with the only difference that $45$ is changed to $35.$
The result of simulations for three different initial values are reported in Table \ref{tab-r2} and Fig.~\ref{fig4}, which are in complete agreement with the prediction of Theorem~\ref{xit}.

\begin{figure}[H]
\begin{center}
\begin{tabular}{ c|c|c|c| }
\cline{2-4}
& \multicolumn{3}{|c|}{Number of times $\Xn_T(i)\leq 0.05$} \\
\cline{2-4}
 &$~~~i=1~~~$ & $~~~i=2~~~$ &$~~~i=3~~~$ \\
\hline
\multicolumn{1}{ |c|  }{For $\bXn_0=(0.8,0.1,0.1)$} &10000& 0& 0 \\
\multicolumn{1}{ |c|  }{For $\bXn_0=(0.1,0.8,0.1)$} & 10000 & 0& 0 \\
\multicolumn{1}{ |c|  }{For $\bXn_0=(0.1,0.1,0.8)$} & 10000 & 0& 0 \\
 \hline
\end{tabular}
\end{center}
\caption{The least abundant component of the terminal state $\bXn_T$ by the end of the simulations for
$\Gamma_i(\bx)=\frac{1+A_2x(i)}{1+\bx^T\bA_2\bx}x(i)$ and the quitting time $T$ introduced in \eqref{tea}.}
\label{tab-r2}
\end{figure}
\end{example}
The difference between the two examples is two-fold: (1) by increasing the value
of $\omega$ we increase the influence of the ``selection matrix" $\bA$ comparing to the neutral selection, and (2) by changing $\bA_1$ to $\bA_2$
we replace the equilibrium $\bmza$ with fairly uniform distribution of ``types" by the considerably less balanced
$\bmzb,$ increasing the threshold in \eqref{hc} by a considerable margin.
\\
\begin{figure*}[ht!]
		\centering
		\begin{tabular}{cc}
			\includegraphics[width=0.473\textwidth]{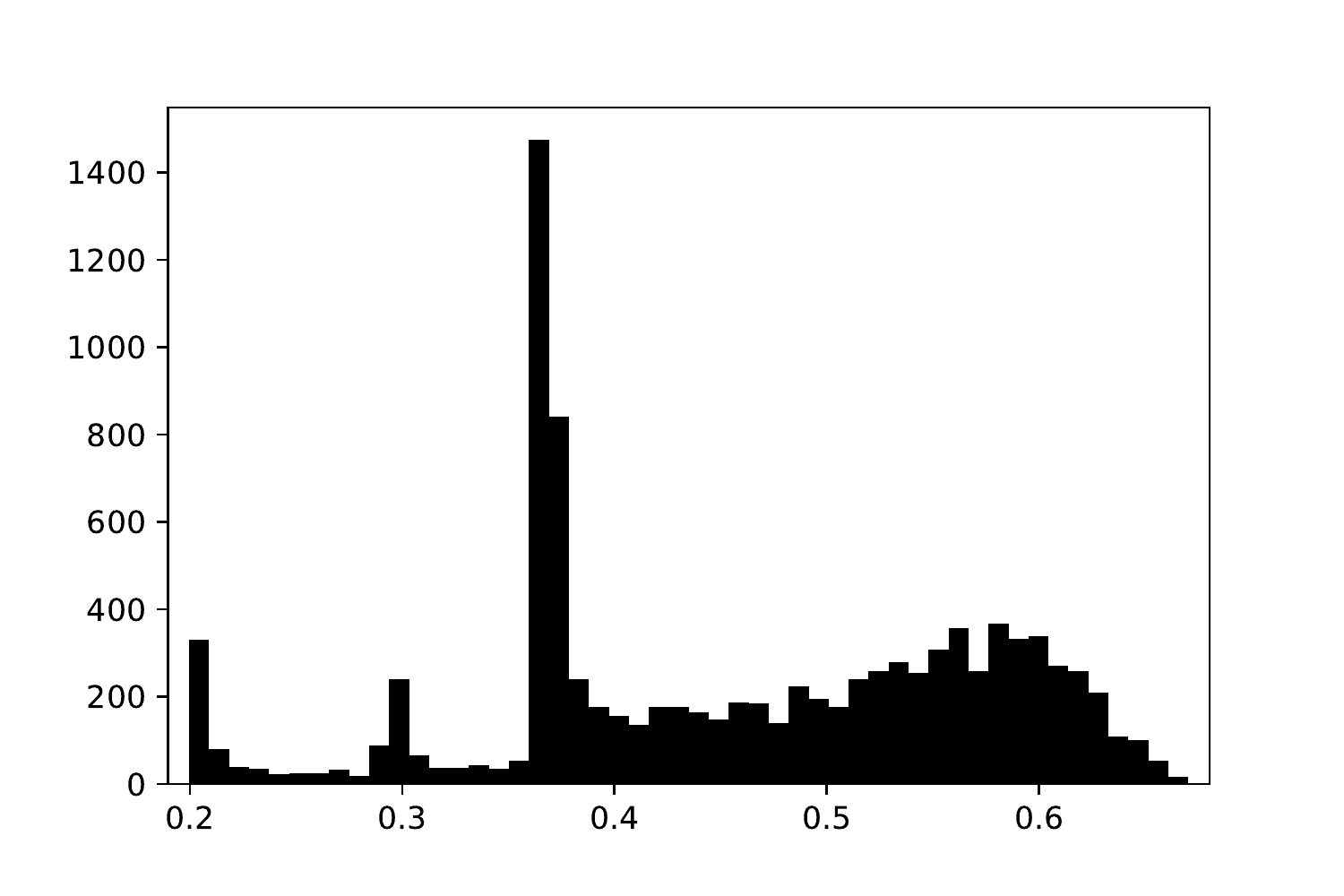}  &
			\includegraphics[width=0.473\textwidth]{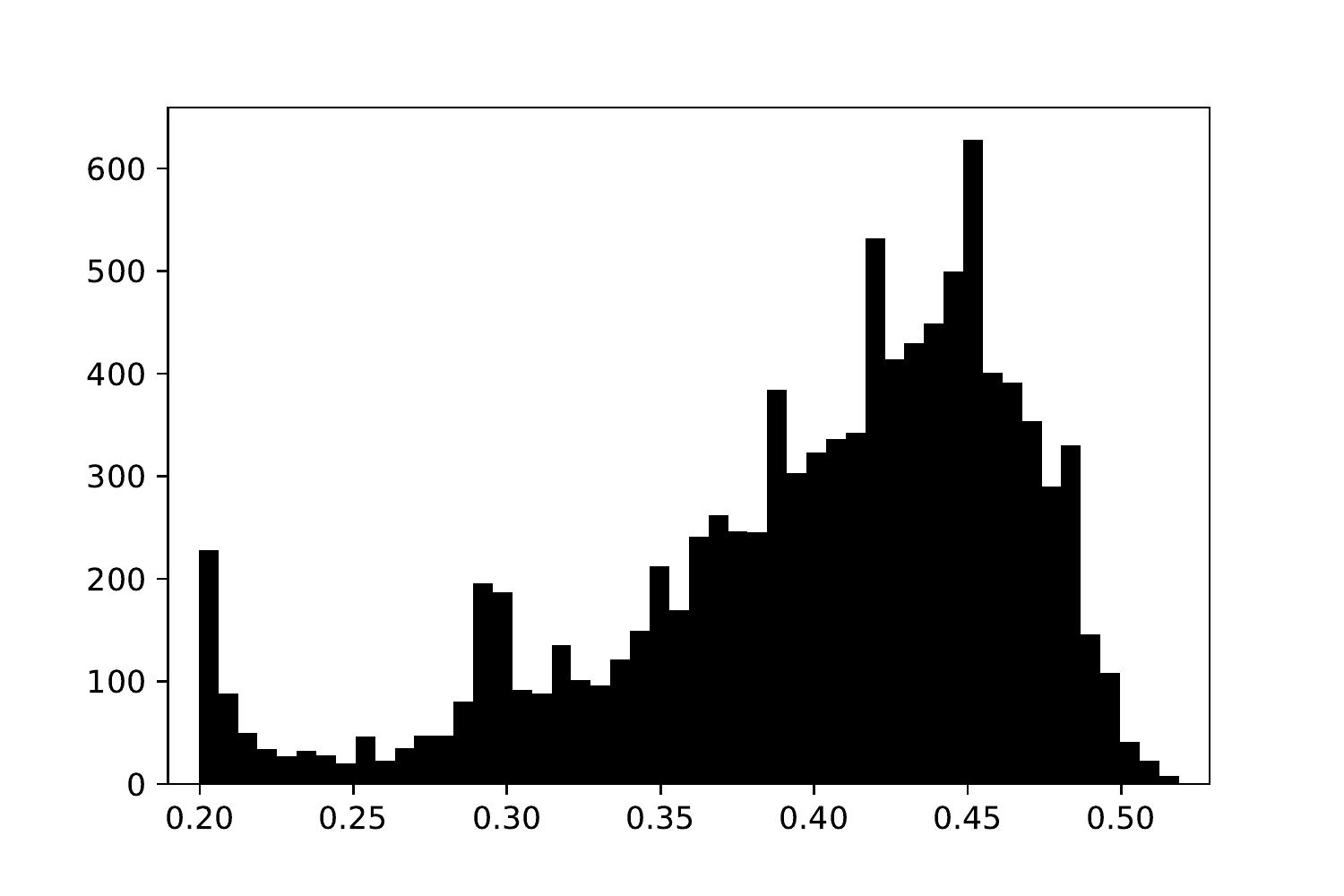} \\
			(a) $d_{eq}$  for $\bXn_0=(0.8,0.1,0.1)$& (b) $d_{eq}$  for $\bXn_0=(0.1,0.8,0.1)$ \\ [6pt]
    	\end{tabular}
		\begin{tabular}{c}
			\includegraphics[width=0.473\textwidth]{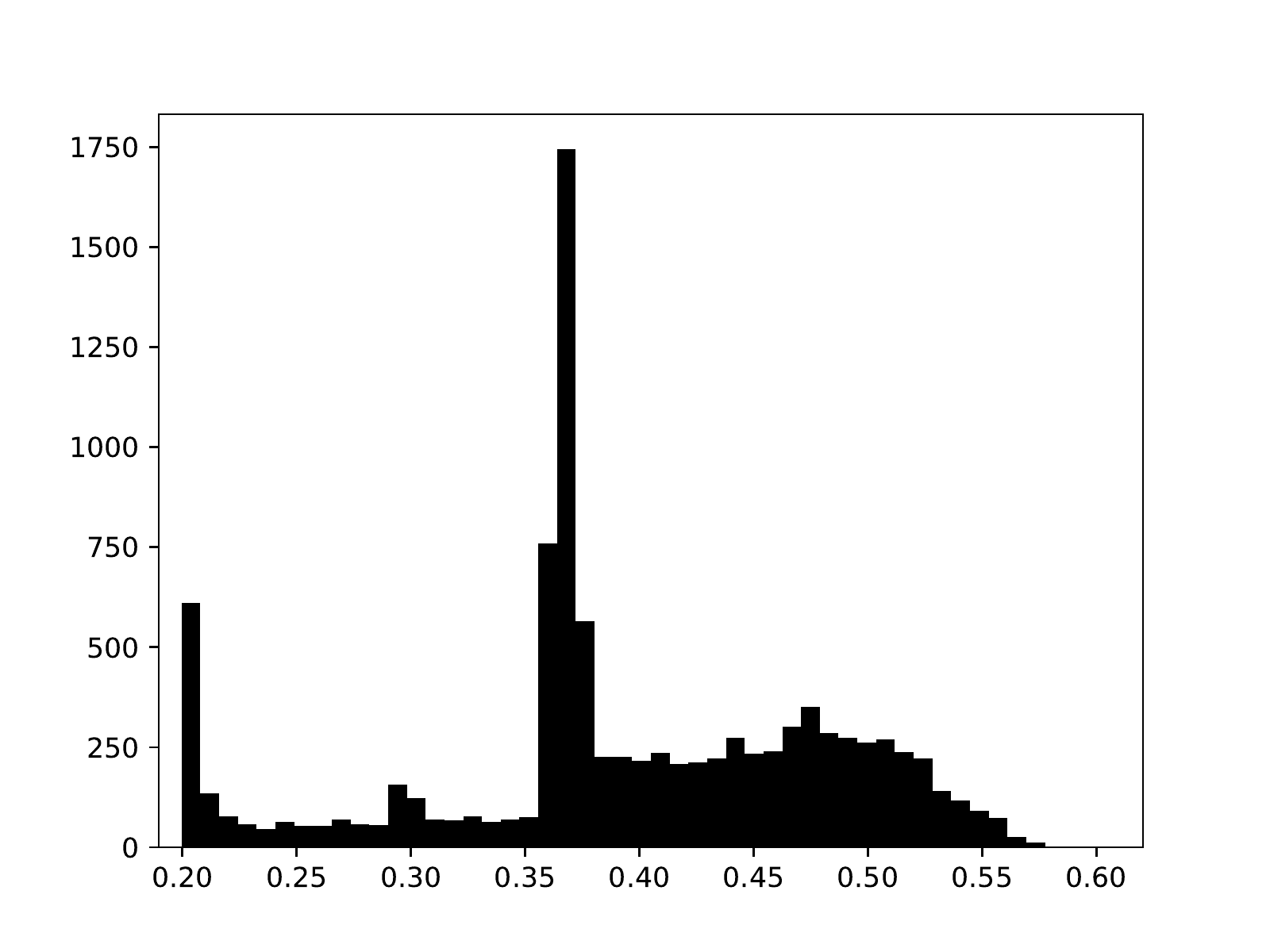} \\
			(c) $d_{eq}$  for $\bXn_0=(0.1,0.1,0.8)$ \\ [6pt]
		\end{tabular}
		\caption{Simulations of the trajectories of $\bpsi_k$ and $\bXn_k$ for $\Gamma_i(\bx)=\frac{1000+A_1x(i)}{1000+\bx^T\bA_1\bx}x(i).$
The $x$-axis represents $d_{eq},$ and the height of a histogram bar corresponds to the number of occurrences of
$d_{eq}$ in a simulation of 10,000 runs.}
		\label{fig3}
	\end{figure*}	
\begin{figure*}[ht!]
		\centering
		\begin{tabular}{cc}
			\includegraphics[width=0.473\textwidth]{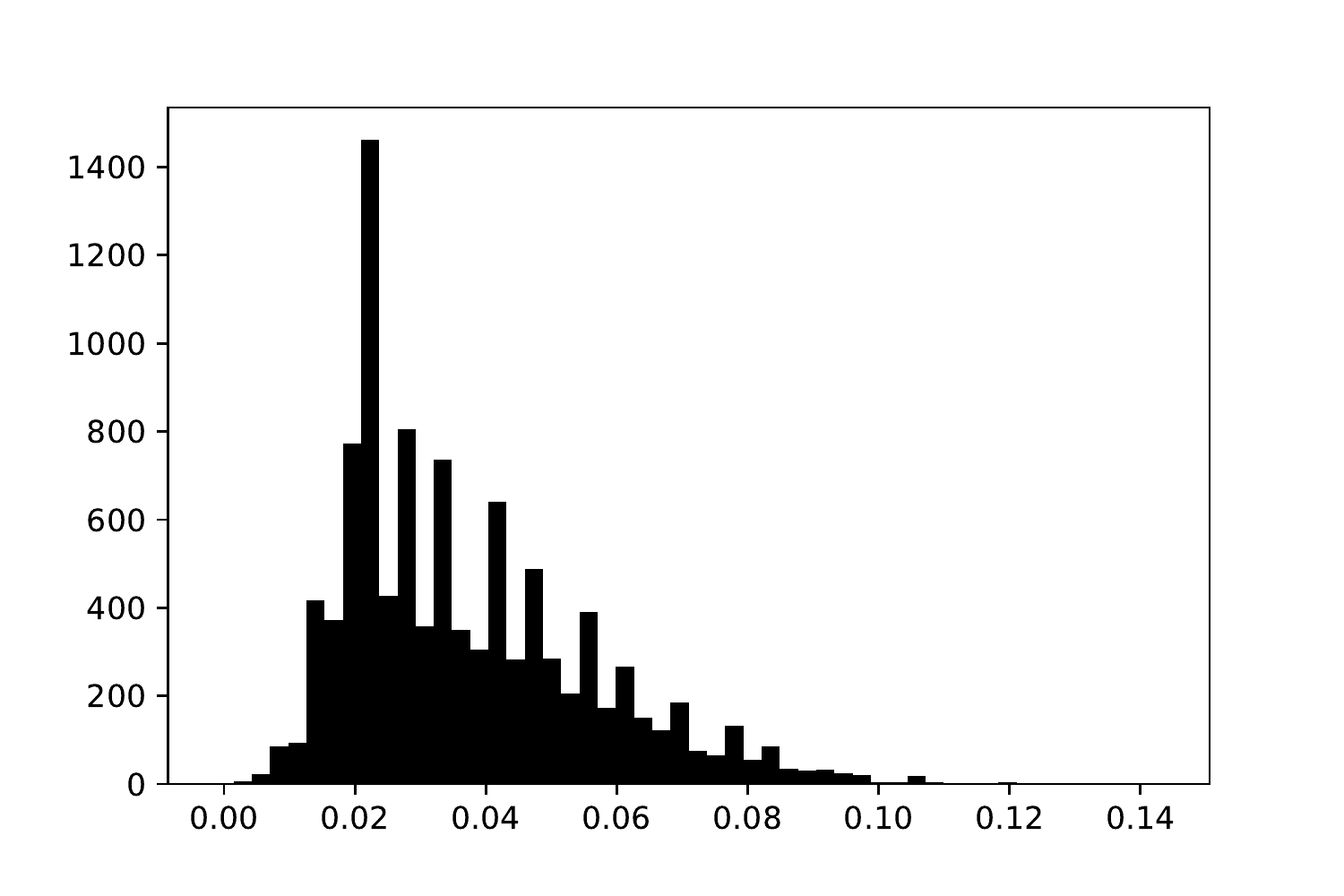}  &
			\includegraphics[width=0.473\textwidth]{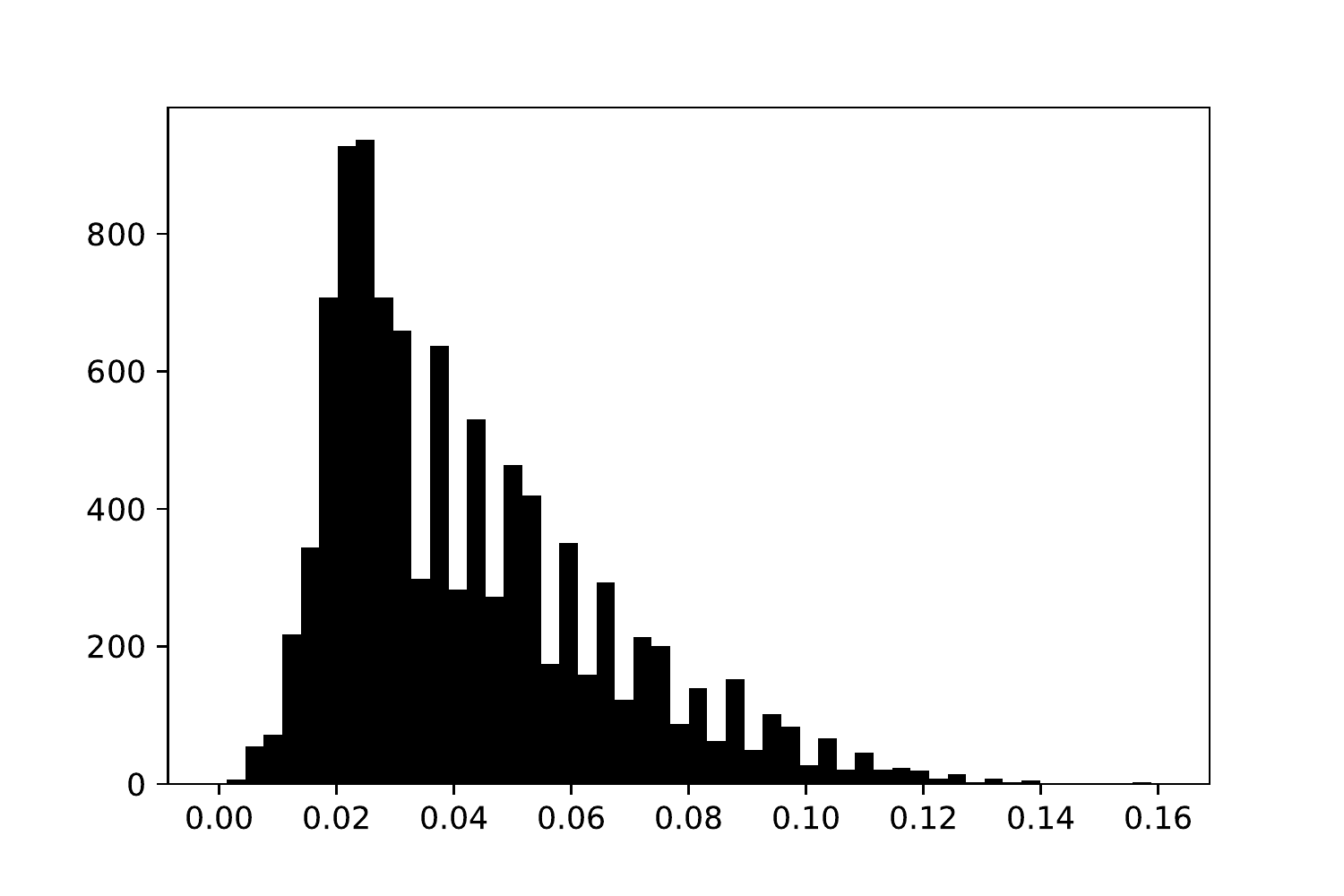}  \\
			(a) $d_{eq}$  for $\bXn_0=(0.8,0.1,0.1)$ & (b) $d_{eq}$  for $\bXn_0=(0.1,0.8,0.1)$\\ [6pt]
		\end{tabular}
        \begin{tabular}{c}
			\includegraphics[width=0.473\textwidth]{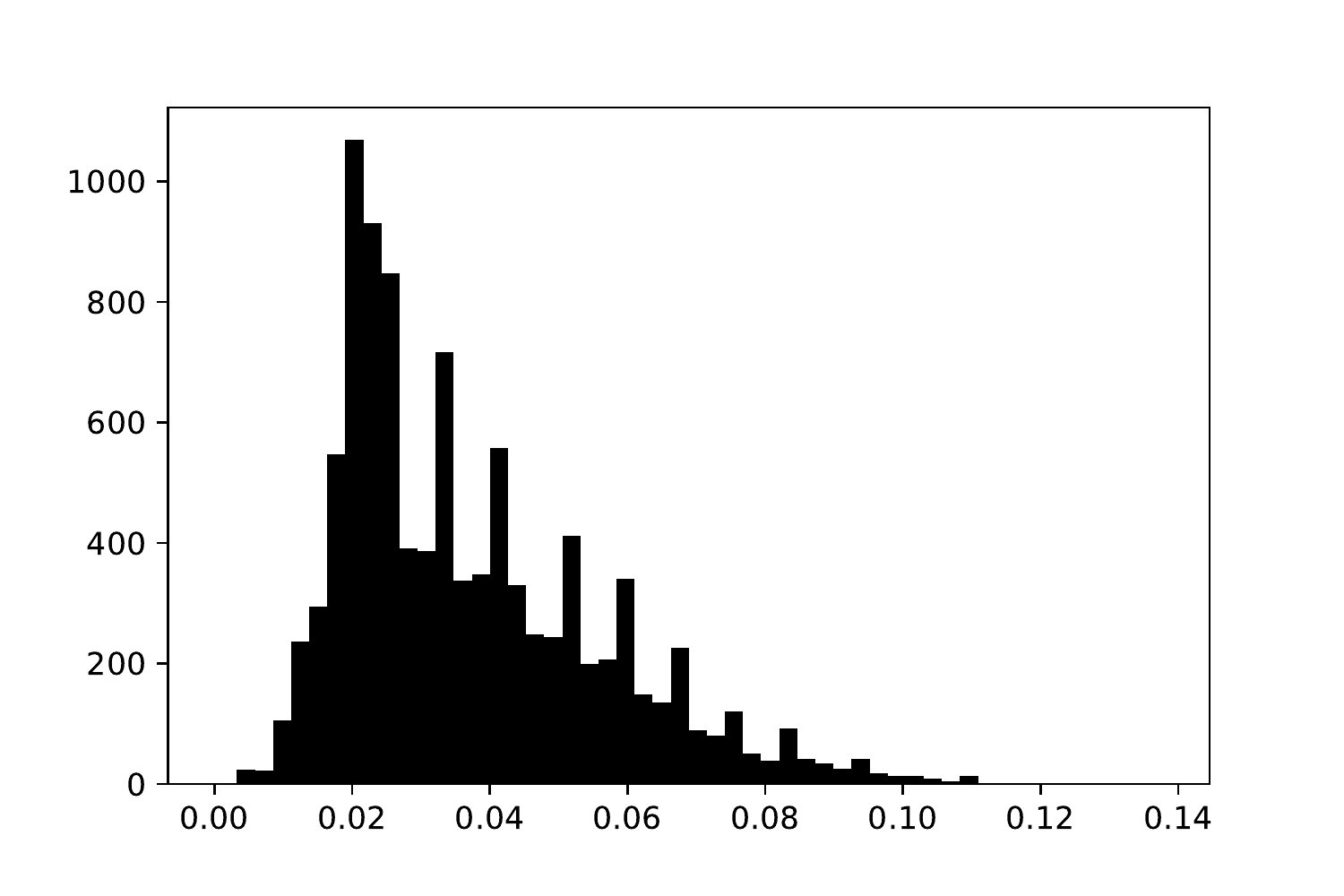} \\
			(c) $d_{eq}$  for $\bXn_0=(0.1,0.1,0.8)$ \\ [6pt]
		\end{tabular}
		\caption{Simulations of the trajectories of $\bpsi_k$ and $\bXn_k$ for $\Gamma_i(\bx)=\frac{1+A_2x(i)}{1+\bx^T\bA_2\bx}x(i).$
The $x$-axis represents $d_{eq},$ and the height of a histogram bar corresponds to the number of occurrences of
$d_{eq}$ in a simulation of 10,000 runs.}
		\label{fig4}
	\end{figure*}
\\
Next we provide an auxiliary ``contraction near the equilibrium" type result, further  illustrating the mechanism of convergence to equilibrium in Theorem~\ref{xit}.
\begin{proposition}
\label{radius}
Let Assumption~\ref{assume7} hold. Then the spectral radius of the Jacobian of $\bGamma$ at $\bmz$ is strictly less than one.
\end{proposition}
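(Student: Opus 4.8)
The plan is to compute the Jacobian $\bD_x(\bmz)$ of $\bGamma$ at the interior equilibrium explicitly and then to locate its spectrum. First I would record that $\bmz$ is indeed a fixed point: by \eqref{seq3} the fitness values $\varphi_i(\bmz)=(1-\omega)+\omega(\bA\bmz)(i)=(1-\omega)+\omega c=:q$ coincide for all $i$, so the replicator denominator in \eqref{replica} equals $q$ and $\Gamma_i(\bmz)=\mz(i)$. Differentiating \eqref{replica} with $\bvarphi=(1-\omega)\be+\omega\bA\bx$ and evaluating at $\bmz$ (using $\partial\varphi_i/\partial x(j)=\omega A_{ij}$ together with the symmetry of $\bA$) yields the explicit entries
\beq
\bD_x(\bmz)_{ij}=\delta_{ij}-\mz(i)+\frac{\omega}{q}\,\mz(i)\bigl(A_{ij}-c\bigr),\qquad i,j\in S_M.
\feq

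Next I would reduce the problem to the tangent space $W_M$ of the simplex. Since $\bGamma$ preserves $\Delta_M$ (that is, $\sum_i\Gamma_i\equiv 1$), the columns of $\bD_x(\bmz)$ sum to zero, so $\be$ is a left null vector; hence $0$ is an eigenvalue and the image of $\bD_x(\bmz)$ lies in $W_M$. Consequently every nonzero eigenvalue has its eigenvector in $W_M$, and it suffices to bound the spectrum of the restriction $\bD_x(\bmz)\big|_{W_M}$. On $W_M$ the rank-one term $-\mz(i)$ and the constant shift $-c$ drop out, leaving $\bD_x(\bmz)\big|_{W_M}=I+\frac{\omega}{q}\,\Pi\bA$ with $\Pi=\mbox{\rm diag}\bigl(\mz(1),\ldots,\mz(M)\bigr)$, and one checks via $\bmz^T\bA=c\be^T$ that $\Pi\bA$ maps $W_M$ into itself.

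It then remains to analyze the eigenvalues of $\Pi\bA$ on $W_M$. I would use that $\Pi\bA$ is self-adjoint for the inner product $\langle\bu,\bv\rangle=\bu^T\Pi^{-1}\bv$, equivalently that it is similar via $\Pi^{1/2}$ to the symmetric matrix $S:=\Pi^{1/2}\bA\Pi^{1/2}$. Its eigenvalues on $W_M$ are thus values of the Rayleigh quotient $\bw^T\bA\bw/\bw^T\Pi^{-1}\bw$, which is \emph{strictly negative} by the negative-definiteness \eqref{seq1}. This already forces the eigenvalues $1+\frac{\omega}{q}\lambda$ of the Jacobian to be $<1$; the remaining and decisive point is the matching lower bound $>-1$.

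The key step is a Perron--Frobenius argument supplying that lower bound. The matrix $S$ has strictly positive entries $S_{ij}=\sqrt{\mz(i)\mz(j)}\,A_{ij}>0$ and admits the strictly positive eigenvector $\Pi^{1/2}\be$, since $S\,\Pi^{1/2}\be=\Pi^{1/2}\bA\bmz=c\,\Pi^{1/2}\be$ by \eqref{seq3}. Hence $c$ is the Perron root of $S=\Pi\bA$, so every eigenvalue $\lambda$ satisfies $|\lambda|\leq c$. Combining this with the previous paragraph, the eigenvalues of $\Pi\bA$ on $W_M$ lie in $[-c,0)$, whence the eigenvalues of $\bD_x(\bmz)\big|_{W_M}$ lie in $\bigl[1-\tfrac{\omega c}{q},\,1\bigr)=\bigl[\tfrac{1-\omega}{q},\,1\bigr)\subset(0,1)$; together with the eigenvalue $0$ this gives spectral radius strictly less than one. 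The main obstacle is exactly this lower bound: condition \eqref{seq1} alone controls only the sign of the $\lambda$'s and hence the upper bound, and it is the positivity of $\bA$ (via Perron--Frobenius, pinning the spectral radius of $S$ at $c$) that keeps the eigenvalues above $-1$.
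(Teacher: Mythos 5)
Your proof is correct, and it takes a genuinely different route from the paper's. Both arguments start from the same linearization: your full Jacobian agrees on $W_M$ with the paper's matrix $D_{ij}=\delta_{ij}+\mz(i)B_{ij}/(1+r),$ where $\bB=\frac{\omega}{1-\omega}\bA$ and $r=\frac{\omega c}{1-\omega},$ the difference being a rank-one multiple of $\bmz\be^T$ that vanishes on $W_M$ (your reduction via the left null vector $\be$ is in fact the cleaner treatment of this point: the paper's $\bD,$ read as a full matrix, has eigenvalue $\frac{1+2r}{1+r}>1$ with eigenvector $\bmz,$ so its statement must be interpreted on $W_M$). Both proofs also symmetrize to see that the spectrum is real --- reversibility with respect to $\bmz$ in the paper, conjugation by $\Pi^{1/2}$ in yours; these are the same device in different language. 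The divergence is in how strictness is obtained. The paper imports a dynamical fact: asymptotic stability of $\bmz,$ coming from the convergence theorem of \cite{losakin} (Theorem~\ref{bsa}) together with discrete-time Lyapunov theory, gives $|\lambda|\leq 1$ on $W_M,$ after which the endpoints must be excluded separately --- $\lambda=1$ via invertibility of $\bA$ (a determinant computation), and $\lambda=-1$ via the scaling trick $\bA\mapsto\beta\bA,$ $\beta>1$ close to $1,$ which would produce a system satisfying the same assumptions but with an unstable linearization. You use no dynamical input at all: condition \eqref{seq1} forces the eigenvalues of $\Pi\bA$ on $W_M$ to be strictly negative, hence all Jacobian eigenvalues are $<1,$ and Perron--Frobenius --- with the Perron root of $\Pi^{1/2}\bA\Pi^{1/2}$ pinned at $c$ by the positive eigenvector $\Pi^{1/2}\be$ supplied by \eqref{seq3} --- bounds them from below. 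Your argument is self-contained linear algebra, does not need invertibility of $\bA,$ and yields the sharper localization of the spectrum on $W_M$ inside $\bigl[\tfrac{1-\omega}{q},1\bigr)\subset(0,1)$ with $q=(1-\omega)+\omega c,$ rather than merely inside $(-1,1)$; what the paper's route buys in exchange is a template that works whenever stability of the equilibrium is known a priori, without exploiting the entrywise positivity of $\bA.$
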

The proof of the proposition is included in Section~\ref{pr}.
This result is instrumental in understanding various applications including but not limited to mixing times \cite{imix} and the random sequence $\bu_k$ defined in the statement of Theorem~\ref{da}. In the later case, as we mentioned before, it implies that the limiting sequence $\bU_k$ in \eqref{vepr} converges to a stationary distribution.
\par
The following corollary is
an application directly related to the metastability of the system.  Let $\mu_N$ be the \textit{quasi-stationary distribution} of the Markov chain $\bXn_k$ on $\Delta_M^\circ.$ Then, there exists
$\lambda_N\in (0,1)$ such that $\mu_NP_{\circ,N}=\lambda_N \mu_N,$ where $P_{\circ,N}$ is  transition matrix of $\bXn_k$
restricted to $\dmn^\circ$  \cite{seneta1, q}.  Iterating, we obtain that for all $k\in\zz_+$ and $\by\in \dmn^\circ,$
\beqn
\label{ln1}
\mu_N P_{\circ,N}^k(\by)=\sum_{\bx\in \dmn^\circ} \mu_N(\bx)P\big(\bXn_k=\by\,\big|\,\bXn_0=\bx\big)=\lambda_N^k \mu_N(\by).
\feqn
Summing up over all $\by\in \dmn^\circ,$ we obtain
\beq
\lambda_N^k= P_{\mu_N}(\nu_N>k),
\feq
where $\nu_N$ is the hitting time introduced  in \eqref{nu}.
In particular,
\beqn
\label{ln}
1-\lambda_N=\sum_{\bx\in \dmn^\circ,y\in \partial(\dmn)} \mu_N(\bx)P_N(\bx,\by).
\feqn
\begin{corollary}
\label{metac}
Let Assumption~\ref{assume7} hold. Then, there exist constants $C>0$ and $b>0$ such that $\lambda_N\geq 1-Ce^{-b N}$ for all
$N\in\nn.$
\end{corollary}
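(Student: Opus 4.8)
The plan is to realize $\lambda_N$ as the Perron eigenvalue of the sub-stochastic kernel $P_{\circ,N}$ on the interior $\dmn^\circ$ and to bound it from below through the survival probability of the chain started from a single interior point near $\bmz$. Under Assumption~\ref{assume7} we have $\Gamma_i(\bx)>0$ for every $i$ whenever $\bx\in\Delta_M^\circ,$ so from any interior state the multinomial step \eqref{model} reaches every state of $\dmn$ (including itself) with positive probability; hence $P_{\circ,N}$ is irreducible and aperiodic on $\dmn^\circ,$ with a strictly positive left eigenvector $\mu_N$ and spectral radius $\lambda_N.$ Fix any grid point $\bx_0\in\dmn^\circ.$ Since $P_{\bx_0}(\nu_N>K)=\sum_{\by\in\dmn^\circ}P_{\circ,N}^K(\bx_0,\by),$ the Perron--Frobenius theorem for primitive matrices gives $\bigl(P_{\bx_0}(\nu_N>K)\bigr)^{1/K}\to\lambda_N$ as $K\to\infty$ (the overlaps of $\delta_{\bx_0}$ with the positive right eigenvector and of the constant vector with $\mu_N$ are strictly positive). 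Consequently, if I can produce a bound of the geometric form $P_{\bx_0}(\nu_N>K)\geq(1-Ce^{-bN})^K$ that is uniform in $K,$ then taking $K$-th roots and letting $K\to\infty$ yields $\lambda_N\geq 1-Ce^{-bN},$ which is exactly the assertion.

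To obtain such a uniform geometric lower bound I would couple the survival event with the decoupling estimate of Theorem~\ref{th3}. By Proposition~\ref{radius} the Jacobian $\bD_x(\bmz)$ has spectral radius strictly less than one; hence there is a norm $\|\cdot\|_*$ in which $\bD_x(\bmz)$ is a strict contraction, and, by continuity of the derivative, a closed ball $\cale$ around $\bmz$ (contained in $\Delta_M^\circ$ and thus bounded away from $\partial(\Delta_M)$ by some $\veps^*>0$) on which $\bGamma$ is a $\|\cdot\|_*$-contraction with some constant $\rho\in(0,1)$ and which $\bGamma$ maps into itself. For $\bx_0\in\cale$ the deterministic orbit $\bpsi_k$ then stays in $\cale,$ hence at $\|\cdot\|$-distance at least $\veps^*$ from the boundary for all $k.$ Applying (the adapted-norm version of) Theorem~\ref{th3} with $\bXn_0=\bx_0$ gives, for $\veps$ small enough, $P_{\bx_0}\bigl(\tau_N(\veps)>K\bigr)\geq\bigl[1-2M\exp(-\tfrac12(1-\rho)^2\veps^2N)\bigr]^K.$ Finally, on the event $\{\tau_N(\veps)>K\}$ every $\bXn_k,$ $k\leq K,$ lies within $\veps<\veps^*$ of $\bpsi_k$ and therefore has all coordinates strictly positive, so $\{\tau_N(\veps)>K\}\subseteq\{\nu_N>K\};$ this delivers the required geometric bound with $C=2M$ and $b=\tfrac12(1-\rho)^2\veps^2.$ (For the finitely many small $N$ for which $\cale$ contains no grid point, or for which $1-Ce^{-bN}<0,$ the inequality holds trivially after enlarging $C.$)

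The main obstacle is the mismatch of norms in the previous paragraph: Proposition~\ref{radius} only guarantees that the spectral radius of $\bD_x(\bmz)$ is below one, whereas Theorem~\ref{th3} is stated for the max norm $\|\cdot\|,$ in which the one-step map need not contract. I would resolve this by passing to an adapted norm $\|\bv\|_*=\|\bS\bv\|$ (with $\bS$ invertible chosen so that $\|\bS\,\bD_x(\bmz)\,\bS^{-1}\|<1$) and re-running the proof of Theorem~\ref{th3}: the error recursion $\|\bXn_{k+1}-\bpsi_{k+1}\|_*\leq\rho\|\bXn_k-\bpsi_k\|_*+\|\bXn_{k+1}-\bGamma(\bXn_k)\|_*$ is unchanged, and the noise term $\|\bXn_{k+1}-\bGamma(\bXn_k)\|_*$ is a maximum of centered, bounded linear functionals of a multinomial increment, to which the same Azuma--Hoeffding estimate applies (with constants adjusted through the equivalence of $\|\cdot\|_*$ and $\|\cdot\|$). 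Equivalently, one may apply Theorem~\ref{th3} verbatim to the $m$-step skeleton chain for an $m$ with $\|\bD_x(\bmz)^m\|<1,$ obtaining $P_{\bx_0}(\nu_N>mK)\geq(1-Ce^{-bN})^{K}$ and the same conclusion after taking $(mK)$-th roots. Everything else---irreducibility and aperiodicity of $P_{\circ,N},$ the Perron--Frobenius limit, and the inclusion of events---is routine, so this norm adaptation is the only genuinely technical step.
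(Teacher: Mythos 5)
Your proposal is correct, but it takes a genuinely different route from the paper's. The paper's proof is a one-step computation with the quasi-stationary distribution itself: it evaluates the eigenvalue identity $\lambda_N\mu_N\big(\calun_{\rho\veps}\big)=\mu_N P_{\circ,N}\big(\calun_{\rho\veps}\big)$ on a small ball $\calun_{\rho\veps}$ around $\bmz,$ lower-bounds the right-hand side by restricting the sum to starting points $\bx\in\calun_\veps$ and using the ball contraction $\bGamma(\calun_\veps)\subset\calun_{\rho\veps}$ (from Proposition~\ref{radius}) together with the one-step Hoeffding bound $P_N\big(\bx,\calun_{\rho\veps}\big)\geq 1-Ce^{-bN},$ and then cancels $\mu_N\big(\calun_{\rho\veps}\big)\leq\mu_N\big(\calun_\veps\big)$ (both strictly positive) to conclude. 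You instead identify $\lambda_N$ as the decay rate of survival probabilities from a fixed interior grid point via Perron--Frobenius for the primitive substochastic kernel $P_{\circ,N},$ and bound those survival probabilities below, uniformly in $K,$ by a trajectory argument: an adapted-norm version of Theorem~\ref{th3} plus the inclusion $\{\tau_N(\veps)>K\}\subseteq\{\nu_N>K\},$ finishing by taking $K$-th roots. Both proofs rest on the same two ingredients---local contraction near $\bmz$ from Proposition~\ref{radius} and exponential concentration of the multinomial step---but the paper applies them to a single step of the quasi-stationary dynamics, while you apply them along entire trajectories and then pass to a spectral limit. What your route buys: an explicit quantitative metastability statement, $P_{\bx_0}(\nu_N>K)\geq(1-Ce^{-bN})^K,$ obtained en route, and a more careful handling of the spectral-radius-versus-norm issue; indeed, the paper's step asserting that a spectral radius bound on $\calun_\veps$ yields $\bGamma(\calun_\veps)\subset\calun_{\rho\veps}$ in the max norm tacitly requires exactly the adapted-norm argument you spell out. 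What the paper's route buys: brevity, and no need for the asymptotics of $P_{\circ,N}^K$ (only existence and positivity of $\mu_N$ are used). One caveat on your closing remark: applying Theorem~\ref{th3} ``verbatim'' to the $m$-step skeleton is not legitimate as stated, since the kernel $P_N^m$ is not a Wright--Fisher kernel with update rule $\bGamma^m$ (the conditional law of $\bXn_{m(k+1)}$ given $\bXn_{mk}$ is not multinomial centered at $\bGamma^m(\bXn_{mk})$); that variant would require reassembling the $m$-step noise from $m$ one-step deviations amplified by Lipschitz constants. Since your primary adapted-norm argument is sound, this does not affect the validity of the proposal.
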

A short proof of the corollary is included in Section~\ref{pmet} below. The result in the corollary and the comparison of \eqref{ln} with \eqref{239} in the proof of Theorem~\ref{xit} given below, motivate the following hypothesis:
\begin{conj}
\label{coca}
Let the conditions of Theorem~\ref{xit} hold. Then:
\item [(i)] The quasi-stationary distribution $\mu_N$ converges, as $N\to\infty,$ to the degenerate distribution supported on the single point $\bmz.$
That is, $\lim_{N\to \infty} \mu_N(\caln)=1$ for any open neighborhood of the equilibrium $\bmz.$
\item [(ii)] $\lim_{N\to\infty} \frac{\alpha^N}{1-\lambda_N}=1.$
\end{conj}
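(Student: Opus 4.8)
The plan is to prove part (i) first and then read off part (ii) from the escape identity \eqref{ln}, the sharpening of which is where the real difficulty lies. For part (i), I would show $\mu_N(\caln)\to 1$ for every open neighborhood $\caln$ of $\bmz$ by coupling the stochastic increments to the $\veps$-chain structure of Proposition~\ref{key}. Fix $\caln$ and let $\eta,T$ be the constants that Proposition~\ref{key} supplies for the compact set $K=\Delta_M$, and fix $\veps\in(0,\eta)$. The multinomial transition law \eqref{model} together with a Hoeffding bound gives $P\big(\|\bGamma(\bXn_k)-\bXn_{k+1}\|\geq\veps\,\big|\,\bXn_k\big)\leq 2Me^{-2\veps^2N}$ uniformly in the current state, so with probability at least $1-2TMe^{-2\veps^2N}$ the trajectory $(\bXn_0,\ldots,\bXn_T)$ is an $\veps$-chain for $\bGamma$; by Proposition~\ref{key} this forces $\bXn_T\in\caln$. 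Crucially this estimate is uniform over the initial state $\bx\in\Delta_M$, including states near $\partial(\Delta_M)$ where the exact deterministic orbit would remain stuck on the invariant boundary but an $\veps$-chain escapes inward. Inserting this into the iterated eigenrelation $\mu_N=\lambda_N^{-T}\mu_N P_{\circ,N}^{\,T}$ from \eqref{ln1} and using $\lambda_N\to1$ (Corollary~\ref{metac}) yields $\mu_N\big(\Delta_M^\circ\backslash\caln\big)\leq \lambda_N^{-T}\,2TMe^{-2\veps^2N}\to 0$, which is part (i).

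For part (ii) I would begin from the exact relation \eqref{ln}, written as $1-\lambda_N=\sum_{\bx\in\dmn^\circ}\mu_N(\bx)\,q_N(\bx)$ with $q_N(\bx):=\sum_{\by\in\partial(\dmn)}P_N(\bx,\by)$ the one-step probability of hitting the boundary from $\bx$. Part (i) suggests replacing the average by its value at the grid points nearest $\bmz$, i.e. $1-\lambda_N\sim q_N(\bmz)$. The single-giant-fluctuation analysis of Section~\ref{exits} identifies the dominant escape route: from $\bmz$ the most likely one-step jump to the boundary lands on the facet $\Delta_{[J^*]}$ that eliminates a least-fit type $j^*\in J^*$, and the leading such single-step probability is of the exponential order $\alpha^N$ governing the threshold \eqref{hc}. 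When $J^*$ is a singleton, an inclusion--exclusion over the $M$ facets shows the remaining facets contribute $o(\alpha^N)$, so $q_N(\bmz)\sim\alpha^N$, and combined with the localization this gives $1-\lambda_N\sim\alpha^N$.

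The genuine obstacle---and the reason the statement is only conjectured---is that the bulk localization of part (i) is far too crude to justify the replacement $1-\lambda_N\sim q_N(\bmz)$ at the level of sharp constants. The escape probability $q_N(\bx)$ is $O(1)$ once $\bx$ lies within $O(1/N)$ of a facet, so the exponentially small $\mu_N$-mass in the boundary layer could in principle overwhelm the contribution of the bulk near $\bmz$ in \eqref{ln}. Making the argument rigorous therefore demands a two-sided estimate on the boundary-layer tail $\mu_N\big(\{\bx:\mathrm{dist}(\bx,\partial(\Delta_M))<\delta\}\big)$ that decays fast enough to beat the growth of $q_N$---in effect a large-deviation rate for the quasi-stationary profile across the boundary layer, rather than merely in the bulk. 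The route I would pursue is to build a Lyapunov/supermartingale functional $V_N$ adapted to the conditioned chain whose exponential moments encode the inward drift guaranteed by permanence (Appendix~B) and by the strict contraction at $\bmz$ (Proposition~\ref{radius}); this should pin down the quasi-stationary profile finely enough that the boundary-layer contribution to \eqref{ln} is $o(\alpha^N)$, giving the upper bound. The matching lower bound $1-\lambda_N\geq\big(1-o(1)\big)\alpha^N$ would then follow by restricting the sum in \eqref{ln} to a single nearest-grid-point to $\bmz$ and to the dominant facet $\Delta_{[J^*]}$. Once this sharp boundary-layer control is in place it delivers both items of Conjecture~\ref{coca} at once and simultaneously sharpens Corollary~\ref{metac}.
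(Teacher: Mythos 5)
First, be clear about what you are comparing against: the paper does \emph{not} prove this statement --- it is stated as Conjecture~\ref{coca} and left open, motivated only by Corollary~\ref{metac} and by the computation showing it is ``almost equivalent'' to Conjecture~\ref{coca1}. A correct proposal would therefore settle an open problem, and yours does not: the decisive gap is already in part (i), which you claim to prove completely. You invoke Proposition~\ref{key} with $K=\Delta_M$. Although the proposition's statement loosely says ``any compact set $K\subset\Delta_M$,'' its proof (Lemma~\ref{fx} and Corollary~\ref{f}) constructs the constants $\eta(\caln,K)$ and $T(\caln,K)$ only for compact $K\subset\Delta_M^\circ$, and no such constants can exist once $K$ reaches $\partial(\Delta_M)$: under Assumption~\ref{assume7} there are no mutations, so the boundary is invariant and every vertex $\be_j$ is a fixed point of $\bGamma$; the constant sequence $\bx_k=\be_j$ is an $\veps$-chain for every $\veps>0$ and every length, yet it never enters $\caln$. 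Your claim that the trapping estimate is ``uniform over the initial state $\bx\in\Delta_M$, including states near $\partial(\Delta_M)$'' is therefore false --- an $\veps$-chain \emph{may} escape inward, but Proposition~\ref{key} requires that \emph{every} $\veps$-chain of length $T$ end in $\caln$, and chains that track the invariant boundary dynamics do not.

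The consequence is that your key display $\mu_N\big(\Delta_M^\circ\backslash\caln\big)\leq \lambda_N^{-T}\,2TMe^{-2\veps^2N}$ is unjustified on exactly the part of the state space that matters. The quasi-stationary measure $\mu_N$ charges all of $\dmn^\circ$, including grid points within $O(1/\sqrt{N})$ of a vertex; started there, the chain survives $T$ steps and remains outside $\caln$ with probability bounded away from zero (the deterministic repulsion from the boundary is geometric, so leaving a layer of width $\delta$ takes on the order of $\log(1/\delta)$ steps, not a fixed $T$). Splitting the eigenrelation into a compact interior set, where Proposition~\ref{key} does apply, plus a boundary layer only yields an inequality of the form $\mu_N(G)\leq\lambda_N^{-T}\big(2TMe^{-c\veps^2N}+c'\,\mu_N(\mbox{layer})\big)$ with $G:=\Delta_M^\circ\backslash\caln$; since the layer is contained in $G$ and $\lambda_N^{-T}>1$, this is vacuous. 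In other words, the boundary-layer control that you correctly identify as ``the genuine obstacle'' for part (ii) is already indispensable for part (i); your argument for (i) implicitly assumes it away. Your assessment of part (ii) itself is sound and consistent with the paper's own position --- the paper reduces the conjecture, modulo Corollary~\ref{metac} and \eqref{ln}, to Conjecture~\ref{coca1} and leaves both open --- but as it stands the proposal establishes neither part.
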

The first part of Conjecture~\ref{coca} is a natural property of stochastic models obtained as a perturbation of asymptotically stable
deterministic dynamical system (cf. \cite{quasi}). Recall $V_M$ from \eqref{vm} and define
\beqn
\label{vm1}
V_M^*=\{\be_j: j\in J^*\}\subset V_M.
\feqn
To illustrate the second part of the conjecture, write:
\beq
P_{\mu_N}(\cale_N)&=&\sum_{k=0}^\infty \sum_{\bx\in \dmn} P_{\mu_N}\big(\bXn_j=\bx, \nu_N>k\big) P\big(\bXn_{k+1}\in V_M^*\,|\,\bXn_k=\bx\big)
\\
&=&
\sum_{k=0}^\infty \sum_{\bx\in \dmn} \mu_N(\bx) P_{\mu_N}(\nu_N>k) P\big(\bXn_{k+1}\in V_M^*\,|\,\bXn_k=\bx\big)
\\
&=&
\sum_{k=0}^\infty \sum_{\bx\in \dmn} \mu_N(\bx) \lambda_N^k P\big(\bXn_1\in V_M^*\,|\,\bXn_0=\bx\big)
\\
&=&
\frac{1}{1-\lambda_N} \sum_{\bx\in \dmn} \mu_N(\bx) P\big(\bXn_1\in V_M^*\,|\,\bXn_0=\bx\big)
\\
&=&
\frac{1}{1-\lambda_N} P_{\mu_N}\big(\bXn_1\in J^*\big)=\frac{P_{\mu_N}\big(\bXn_1\in V_M^*\big)}{P_{\mu_N}\big(\bXn_1\in V_M\big)},
\feq
where the equality in the last step follows from \eqref{ln}. Thus, Conjecture~\ref{coca} is almost equivalent to the following:
\begin{conj}
\label{coca1}
Let the conditions of Theorem~\ref{xit} hold. Then, $\lim_{N\to\infty} P_{\mu_N}(\cale_N)=1.$
\end{conj}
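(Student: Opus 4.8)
The plan is to build on the exact reduction established just before the statement. Combining that computation with \eqref{ln} expresses the target probability purely in terms of one-step transitions out of the metastable well under the quasi-stationary law: writing $g_j(\bx)=P\big(\Xn_1(j)=0,\ \Xn_1(i)>0\text{ for all }i\neq j\,\big|\,\bXn_0=\bx\big)$ for the probability that a single step from an interior state $\bx$ eliminates exactly the type $j$, and $g(\bx)=P\big(\bXn_1\in\partial(\dmn)\,\big|\,\bXn_0=\bx\big)$ for the one-step boundary-hitting probability, one has $P_{\mu_N}(\cale_N)=\big(\sum_{j\in J^*}\int g_j\,d\mu_N\big)\big/\int g\,d\mu_N$. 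Since $\int g\,d\mu_N=\sum_j\int g_j\,d\mu_N+\int(\text{multi})\,d\mu_N$, where $(\text{multi})$ collects the events in which two or more types vanish simultaneously, proving the conjecture amounts to showing that $\sum_{j\notin J^*}\int g_j\,d\mu_N$ and $\int(\text{multi})\,d\mu_N$ are both of smaller exponential order than $\sum_{j\in J^*}\int g_j\,d\mu_N$.

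First I would record the sharp one-step asymptotics implied by the multinomial kernel \eqref{model}: uniformly on compact subsets of $\Delta_M^\circ$ one has $g_j(\bx)=\big(1-\Gamma_j(\bx)\big)^N\big(1+o(1)\big)$, and the probability of a simultaneous elimination of two types $i,j$ is of the strictly smaller order $\big(1-\Gamma_i(\bx)-\Gamma_j(\bx)\big)^N$, so $(\text{multi})$ is negligible. On the neighborhood $\caln_\theta$ of \eqref{nd} the defining inequalities $\alpha_\bx<\alpha+\theta$ and $\beta_\bx>\beta-\theta$ furnish, for each $\bx\in\caln_\theta$, a type $j\in J^*$ with $1-\Gamma_j(\bx)>1-\alpha-\theta$ and the bound $1-\Gamma_i(\bx)<1-\beta+\theta$ for every $i\notin J^*$. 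As $\theta<\frac{\beta-\alpha}{2}$ forces $1-\beta+\theta<1-\alpha-\theta$, the $J^*$-eliminations beat the non-$J^*$ ones on $\caln_\theta$ by the exponential factor $\big(\tfrac{1-\beta+\theta}{1-\alpha-\theta}\big)^N\to 0$. Hence $\sum_{j\in J^*}\int g_j\,d\mu_N\geq c\,(1-\alpha-\theta)^N\mu_N(\caln_\theta)$ for some $c>0$, whereas $\int_{\caln_\theta}\sum_{j\notin J^*}g_j\,d\mu_N\leq M\,(1-\beta+\theta)^N$, so the portion of the error coming from $\caln_\theta$ vanishes.

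The hard part is the tail $\Delta_M^\circ\backslash\caln_\theta$, on which $g_j(\bx)$ may be of order one, so that the only available bound is $\int_{\caln_\theta^c}\sum_j g_j\,d\mu_N\leq\mu_N(\caln_\theta^c)$; this is useful precisely when one can show the quantitative concentration estimate $\mu_N(\caln_\theta^c)=o\big((1-\alpha-\theta)^N\big)$. This is where the threshold hypothesis \eqref{hc}, $1-\alpha-\theta>e^{-\veps_\theta^2/2}$, is consumed: it guarantees that $(1-\alpha-\theta)^N$ exceeds the large-deviation cost $e^{-\veps_\theta^2N/2}$ of the fluctuation needed to transport the chain from the well out of $\caln_\theta$, as quantified by Proposition~\ref{key} and the decoupling estimate of Theorem~\ref{th1}. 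I would establish the required tail bound for $\mu_N$ by combining the Gaussian approximation of Theorem~\ref{da} with the spectral contraction of Proposition~\ref{radius} --- which renders the limiting recursion \eqref{vepr} genuinely stationary, so that to leading order $\mu_N$ is a discretized Gaussian centred at $\bmz$ with covariance of order $1/N$ --- and then upgrading this local statement to a global exponential tail estimate via Theorems~\ref{th1}--\ref{th3} and the transport bound of Proposition~\ref{key}, along the lines of the proof of Theorem~\ref{xit}. The escape-rate bound $1-\lambda_N\leq Ce^{-bN}$ of Corollary~\ref{metac} supplies the complementary control confirming that $\mu_N$ is supported, up to $o\big((1-\alpha-\theta)^N\big)$ mass, inside $\caln_\theta$.

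Assembling the three estimates yields $1-P_{\mu_N}(\cale_N)\to 0$. I expect the genuine obstacle to be the quantitative tail control of $\mu_N$ in the preceding paragraph: a soft statement such as Conjecture~\ref{coca}(i), that $\mu_N\Rightarrow\delta_{\bmz}$, is by itself insufficient, since the boundary contribution must be beaten at the sharp exponential scale $(1-\alpha-\theta)^N$ rather than merely shown to be $o(1)$; in particular one must rule out anomalous quasi-stationary mass on thin collars around the non-$J^*$ facets. Once this sharpened estimate is in hand, an essentially equivalent and perhaps more transparent route is to prove a version of Theorem~\ref{xit}(i) that is uniform over deterministic initial points in a fixed compact neighborhood $\overline{\caln_\theta}\subset\Delta_M^\circ$, and then to integrate $P\big(\cale_N\,\big|\,\bXn_0=\bx\big)$ against $\mu_N$, splitting the integral on $\{\bx\in\caln_\theta\}$ and its complement and absorbing the latter into $\mu_N(\caln_\theta^c)$.
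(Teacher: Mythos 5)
There is nothing in the paper to compare your argument against: the statement you are proving is Conjecture~\ref{coca1}, which the paper leaves \emph{open}. All the paper supplies is the exact identity
$P_{\mu_N}(\cale_N)=P_{\mu_N}\big(\bXn_1\in V_M^*\big)\big/P_{\mu_N}\big(\bXn_1\in V_M\big)$,
obtained from quasi-stationarity and \eqref{ln}, together with the remark that the conjecture is ``almost equivalent'' to Conjecture~\ref{coca}. Your first paragraph reproduces this reduction correctly (indeed in a cleaner form, with $g_j$ and $g$ matching the actual definition of $\cale_N$), your one-step multinomial asymptotics are sound, and your identification of the decisive obstacle --- a quantitative tail bound $\mu_N\big(\Delta_M^\circ\backslash\caln_\theta\big)=o\big((1-\alpha-\theta)^N\big)$ for the quasi-stationary distribution --- is exactly right. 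That obstacle is precisely why the statement is a conjecture rather than a theorem.

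The gap is that your proposed route to this tail bound does not work. Theorem~\ref{da} is a limit theorem for each \emph{fixed} time $k$ as $N\to\infty$; it gives no control, uniform in $k$, over the law of $\bXn_k$ conditioned on $\{\nu_N>k\}$, and $\mu_N$ is exactly such an infinite-horizon object (the left eigenvector of $P_{\circ,N}$). Hence the claim that ``to leading order $\mu_N$ is a discretized Gaussian centred at $\bmz$ with covariance of order $1/N$'' cannot be extracted from Theorem~\ref{da} plus Proposition~\ref{radius}; it is a quantitative strengthening of Conjecture~\ref{coca}(i), which the paper also leaves open. Likewise, Corollary~\ref{metac} bounds only the total escape rate, $1-\lambda_N\leq Ce^{-bN}$, with a constant $b$ inherited from Proposition~\ref{key} that has no stated relation to $-\log(1-\alpha-\theta)$; it says nothing about \emph{where} the mass of $\mu_N$ sits, and in particular cannot rule out the anomalous collar mass near the non-$J^*$ facets that you yourself flag as the danger. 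Read strictly, your plan therefore reduces Conjecture~\ref{coca1} to an unproven sharpening of Conjecture~\ref{coca}(i) --- consistent with the paper's observation that the two conjectures are nearly equivalent, but a proof of neither. To close it one would need genuine large-deviations or Lyapunov-function control of the conditioned kernel (e.g.\ in the spirit of the theory of quasi-stationary distributions for randomly perturbed dynamical systems cited as \cite{quasi}), establishing that under \eqref{hc} the exponential rate at which $\mu_N$ charges $\Delta_M^\circ\backslash\caln_\theta$ strictly exceeds $-\log(1-\alpha-\theta)$; no result currently in the paper delivers this.
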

A weak form of the hypothesis that $\lim_{N\to\infty} \frac{\alpha^N}{1-\lambda_N}=1$ is
\begin{conj}
Let the conditions of Theorem~\ref{xit} hold. Then, $\lim_{N\to\infty} \frac{1}{N}\log(1-\lambda_N)=\alpha.$
\end{conj}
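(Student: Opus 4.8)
The plan is to start from the exact identity \eqref{ln}, which expresses $1-\lambda_N$ as the one-step escape probability from the interior, averaged against the quasi-stationary distribution $\mu_N$:
\beq
1-\lambda_N=\sum_{\bx\in\dmn^\circ}\mu_N(\bx)\,P_N\big(\bx,\partial(\dmn)\big),\qquad P_N\big(\bx,\partial(\dmn)\big)=P\big(\bXn_1\in\partial(\dmn)\,\big|\,\bXn_0=\bx\big).
\feq
The argument then separates into a large-deviation estimate for the inner escape probability and a concentration estimate for the outer $\mu_N$-average. For the inner term, the multinomial kernel \eqref{model} gives $P\big(\Zn_1(j)=0\,\big|\,\bXn_0=\bx\big)=\big(1-\Gamma_j(\bx)\big)^N$ exactly, so a single-facet lower bound together with a union bound yields
\beq
\big(1-\min_{j\in S_M}\Gamma_j(\bx)\big)^N\le P_N\big(\bx,\partial(\dmn)\big)\le M\big(1-\min_{j\in S_M}\Gamma_j(\bx)\big)^N.
\feq
To exponential order the escape rate at $\bx$ is therefore $\log\big(1-\min_j\Gamma_j(\bx)\big)$, and since $\bGamma(\bmz)=\bmz$ gives $\min_j\Gamma_j(\bmz)=\alpha$, the value at the equilibrium is $\log(1-\alpha)$, the single-step large-deviation rate for the extinction of the \emph{least fit} type (the cheapest single-facet event).

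For the lower bound I would restrict the sum to a neighbourhood $\caln_\delta=\{\bx:\min_j\Gamma_j(\bx)\le\alpha+\delta\}$ of $\bmz$, which is open and non-empty by continuity of $\bGamma$, obtaining $1-\lambda_N\ge\mu_N(\caln_\delta)\,(1-\alpha-\delta)^N$. If $\mu_N(\caln_\delta)$ is not super-exponentially small, which is the content of the localization asserted in Conjecture~\ref{coca}(i), then $\liminf_N\frac1N\log(1-\lambda_N)\ge\log(1-\alpha-\delta)$, and letting $\delta\downarrow0$ gives the lower half. This localization is exactly where Proposition~\ref{radius} enters: the spectral radius of the Jacobian at $\bmz$ being strictly less than one makes $\bGamma$ a contraction on a closed ball $\cale$ around $\bmz$, so Theorem~\ref{th3} furnishes a residence time near $\bmz$ of expected order $e^{cN}$, forcing the quasi-stationary law to place overwhelming mass near $\bmz$.

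The matching upper bound is the main obstacle. Splitting $1-\lambda_N=E_{\mu_N}[P_N(\cdot,\partial)]$ into the metastable core and its complement, the core contributes at most $M(1-\alpha+\delta)^N$, which is of the correct order; the difficulty is entirely in the complement, where $\min_j\Gamma_j(\bx)\to0$ as $\bx$ approaches $\partial(\dmn)$ (since $\Gamma_j(\bx)=0$ exactly when $x(j)=0$), so that $P_N(\bx,\partial)$ is of order one and the crude bound only returns the mass $\mu_N$ assigns near the boundary. To close the estimate one must show
\beq
\sum_{\bx\ \text{near}\ \partial(\dmn)}\mu_N(\bx)\,P_N\big(\bx,\partial(\dmn)\big)=o\big((1-\alpha)^N\big),
\feq
i.e. that the quasi-stationary mass is exponentially thin near the boundary at a rate exceeding $-\log(1-\alpha)$. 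This is precisely the rigorous role of the threshold condition \eqref{hc} (the formal version of \eqref{heun}): it guarantees that the cost of the gradual, multi-step route to the boundary dominates the cost $-\log(1-\alpha)$ of the direct one-jump escape, making the direct jump the unique bottleneck. I expect the proof to require a quasi-potential comparison built from the contraction of Proposition~\ref{radius} and the exit-time estimate of Theorem~\ref{th3}, converting \eqref{hc} into the tail bound above. Granting this, the core estimate together with $\delta\downarrow0$ gives $\limsup_N\frac1N\log(1-\lambda_N)\le\log(1-\alpha)$, and with the lower bound the limit is identified as $\log(1-\alpha)$, the exponential rate of escape from the metastable equilibrium.
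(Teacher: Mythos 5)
This statement is one of the paper's open conjectures: the paper contains no proof of it, only the exact identity \eqref{ln}, the one-sided estimate of Corollary~\ref{metac} (which gives $1-\lambda_N\leq Ce^{-bN}$ for an unspecified $b>0$), and the heuristics surrounding \eqref{heun}. So there is no argument of the authors to compare yours against; the only question is whether your proposal settles the conjecture, and it does not, though it is a sensible program. On the positive side, you have effectively corrected the statement: as literally written the conjecture cannot hold, since $\frac{1}{N}\log(1-\lambda_N)\leq 0$ while $\alpha=\min_{j}\Gamma_j(\bmz)>0$. Your rate $\log(1-\alpha)$ is the coherent reading, and it simultaneously repairs the paper's heuristic (the probability of instantly eliminating type $j$ from near $\bmz$ is $(1-\Gamma_j(\bmz))^N$, not $\Gamma_j(\bmz)^N$ as written above \eqref{heun}; correspondingly Conjecture~\ref{coca}(ii) should have $(1-\alpha)^N$ in place of $\alpha^N$). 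Your two-sided one-step bound $(1-\min_j\Gamma_j(\bx))^N\leq P_N\big(\bx,\partial(\dmn)\big)\leq M\,(1-\min_j\Gamma_j(\bx))^N$ is correct, as is the observation that the sharp rate must come from the region where $\min_j\Gamma_j$ is close to $\alpha$.

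The genuine gaps are the two inputs you lean on, each of which is of essentially the same depth as the conjecture itself. First, the lower bound needs $\mu_N(\caln_\delta)$ to be not exponentially small; this is (a weak form of) Conjecture~\ref{coca}(i), itself left open in the paper. Your claimed derivation of it from Proposition~\ref{radius} plus Theorem~\ref{th3} does not go through as stated: Theorem~\ref{th3} bounds exit times of the \emph{unconditioned} chain started inside a contraction region, whereas $\mu_N$ is the left Perron eigenvector of the absorbed kernel; converting residence-time bounds into eigenvector mass requires a renewal or spectral argument (the QSD is an ergodic average of the survival-conditioned chain, and the conditioning redistributes mass), which you do not supply. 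Second, the upper bound --- that the QSD mass near $\partial(\dmn)$, where the one-step escape probability is of order one, contributes $o\big((1-\alpha)^N\big)$ --- is, as you yourself flag, the crux; asserting that \eqref{hc} ``guarantees'' it is not a proof, since \eqref{hc} was engineered for the return-probability estimates \eqref{g} and \eqref{239a} in the proof of Theorem~\ref{xit}, and no quantitative tail bound on $\mu_N$ near the boundary follows from those estimates without genuinely new work (this is exactly where a Freidlin--Wentzell or potential-theoretic quasi-potential analysis would have to be built). In short: you have correctly identified the rate, the decomposition, and both bottlenecks, but the proposal is a plan for a proof rather than a proof.
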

\section{Proofs}
\label{proofs}
This section is devoted to the proof of our main results stated in Section~\ref{mainr}. As before, transition matrix of the Markov chain $\bXn$ is denoted
by $P_N.$ We write $P_{\mu_N}(A_n)$ for the probability of an event $A_n$ measurable with respect to the $\sigma$-algebra generated by $(\bXn_k)_{k\geq 0}$
if $\mu_N$ is the distribution of $\bXn_0.$ If $\mu_N$ is concentrated on a single point $\bx\in\dmn,$ we simplify this notation to $P_x(A_N).$ We use the standard notations $f_n=o(g_n)$ and $f_n=O(g_n)$ to indicate that, respectively, the sequence $f_n$ is ``ultimately smaller" than $g_n,$ that is $\lim_{n\to\infty} \frac{f_n}{g_n}=0,$ and $f_n$ and $g_n$ are of the same order, that is $\lim_{n\to\infty} \frac{f_n}{g_n}=$ exists
and it is not zero.
\subsection{Proof of Theorem~\ref{da}}
\label{pda}
Let $\zeta:=\bigl\{\zx_{k,i}:\bx\in\Delta_M,i\in\nn,k\in\zz_+\bigr\}$ be a collection of independent random variables distributed as follows:
\beqn
\label{z}
P\bigl(\zx_{k,i}=j\bigr)=\Gamma_j(\bx),\qquad j\in S_M.
\feqn
For $\bx\in\Delta_M,i\in\nn,k\in\zz_+,$ let $\bxxk$ be a random $M$-dimensional vector such that
\beq
\xx_{k,i}(j)=
\left\{\begin{array}{ll}
1&\mbox{\rm if}~\zx_{k,i}=j,
\\
0&\mbox{\rm otherwise}.
\end{array}
\right.
\feq
For given parameters $k,i,$ the family of random vectors $\bigl\{\bxxk:\bx\in\Delta_M\}$ is a ``white noise" random field.
Without loss of generality we can assume that
\beqn
\label{c}
\bZn_{k+1}=\sum_{i=1}^N {\bm \xi}_{k,i}^{(\bXn_k)}.
\feqn
Thus, by the law of large numbers, the conditional distribution of $\bXn_{k+1}=\frac{\bZn_{k+1}}{N},$ given $\bXn_k,$ converges weakly
to a degenerate distribution concentrated on the single point $\bGamma\bigl(\bXn_k\bigr).$ This implies part (a) of the theorem
by a result of \cite{karr} adopted to a non-homogeneous Markov chain setting
(cf. Remark~(i) in \cite[p.~60]{bdet}).
\par
To prove part (b) of the theorem, write
\beq
\bun_{k+1}=\frac{1}{\sqrt{N}}\sum_{i=1}^N \Bigl(\bxxk-\bGamma\bigl(\bXn_k\bigr)\Bigr)+\sqrt{N}\Bigl(\bGamma\bigl(\bXn_k\bigr)-\bpsi_{k+1}\Bigr).
\feq
The first terms converges in distribution, as $N$ goes to infinity, to the degenerate Gaussian distribution defined in the statement
of the theorem (see, for instance, Theorem~14.6 in \cite{vas}). Furthermore, Taylor expansion for $\bGamma$ gives
\beq
\sqrt{N}\Big(\bGamma\bigl(\bXn_k\bigr)-\bpsi_{k+1}\Big)=\sqrt{N}\Big(\bD_x(\bpsi_k) \big(\bXn_k-\bpsi_k\bigr)+O(1/N)\Big)
=\bD_x(\bpsi_k)\bun_k+o(1),
\feq
which implies the result. \hfill\hfill\qed
\subsection{Proof of Theorem \ref{th1}}
\label{pth1}
We have
\beq
\bigl\|\bXn_k-\bpsi_k\bigr\|&=&\bigl\|\bXn_k-\bGamma(\bXn_{k-1})+\bGamma(\bXn_{k-1})-\bGamma(\bpsi_{k-1})\bigr\|
\\
&\leq&
\bigl\|\bXn_{k}-\Gamma(\bXn_{k-1})\bigr\|+\bigl\|\Gamma(\bXn_{k-1})-\bGamma(\bpsi_{k-1})\bigr\|
\\
&\leq&
\bigl\|\bXn_{k}-\Gamma(\bXn_{k-1})\bigr\|+\rho \bigl\|\bXn_{k-1}-\bpsi_{k-1}\|.
\feq
For $m\in\nn,$ we set $\bun_m:=\bXn_{m+1}-\bGamma(\bXn_{m}).$ Iterating the above inequality, we obtain that
\beq
\bigl\|\bXn_{k}-\bpsi_{k}\bigr\|
\leq
\sum_{m=0}^{k-1} \rho^m \bigl\| \bun_{k-m-1}\bigr\|
\leq
c_k^{-1}\max_{0\leq m\leq k-1} \bigl\|\bun_m\bigr\|,
\feq
where $c_k$ is defined in \eqref{ck}. Therefore,
\beq
&&
P\Bigl( \max_{1\leq k\leq K}\bigl\|\bXn_k-\bpsi_k \bigr\| >\veps\Bigr)
\leq
P\Bigl(\max_{0\leq m\leq K-1}\|\bun_m\| >\veps c_K\Bigr)
\\
&&
\qquad
\leq
\sum_{m=0}^{K-1} P\Bigl(\|\bun_m\|>\veps c_K \Bigr)
=
\sum_{m=0}^{K-1} P\Bigl(\|\bXn_{m+1}-\bGamma(\bXn_m)\|>\veps c_K \Bigr).
\feq
Given $\bXn_m,$ the conditional distribution of the random vector $\bZn_{m+1}=N\bXn_{m+1}$ is the multinomial $\calm\bigl(N,\bGamma(\bXn_m)\bigr).$
Therefore, applying Hoeffding's inequality \cite{hinq} to conditionally binomial random variables $X_{m+1}(i),$ we obtain that for any $\eta>0,$
\beqn
\nonumber
P\Bigl(\|\bXn_{m+1}-\bGamma(\bXn_m)\|>\eta\Bigr)&\leq& \sum_{i=1}^M P\Bigl(\bigl|\Xn_{m+1}(i)-\Gamma_i(\bXn_m)\bigr|>\eta\Bigr)
\\
\label{bound11}
&\leq&
2M\exp\Bigl(-\frac{N\eta^2}{2}\Bigr).
\feqn
Therefore,
\beqn
\label{bound1}
P\Bigr( \max_{1\leq k\leq K}\bigl\|\bXn_k-\bpsi_k\bigr\| >\veps \Bigl)&\leq&
2KM\exp\Bigl(-\frac{\veps^2 c_K^2 N}{2}\Bigr).
\feqn
To complete the proof observe that $c_K=\frac{1-\rho}{1-\rho^K}>1-\rho$ if $\rho<1.$ \hfill\hfill \qed
\subsection{Proof of Theorem~\ref{th3}}
\label{pth3}
It follows from \eqref{bound1} that
\beqn
\label{hoeff}
P\Bigl(\|\bXn_{k+1}-\Gamma(\bXn_k)\|>(1-\rho)\veps\Bigr)\leq  2M\exp\Bigl(-\frac{N(1-\rho)^2\veps^2}{2}\Bigr).
\feqn
Furthermore, by \eqref{lip}, conditionally on the event that $\tau_N(\veps)>k,$ we have
\beq
\bigl\|\bGamma(\bXn_k)-\bpsi_{k+1}\bigr\|\leq \rho\|\bXn_k-\bpsi_k\|\leq \rho\veps.
\feq
Combining the last  inequality with \eqref{hoeff}, we obtain that with probability one,
\beq
P\bigl(\tau_N(\veps)>k+1\,\bigl|\,\tau_N(\veps)>k \bigl)\geq 1- 2M\exp\Bigl(-\frac{N(1-\rho)^2\veps^2}{2}\Bigr),
\feq
which implies the claim of the theorem.\hfill\hfill\qed
\subsection{Proof of Proposition~\ref{key}}
\label{keyp}
Recall $\psi_k$ from \eqref{psi}. For $\caln\subset \Delta_M,$ denote by $\tau(\bx,\caln)$ the first hitting time of the set
$\caln$ by $\bpsi_k$ with $\bpsi_0=\bx:$
\beq
\tau(\bx,\caln)=\inf\{k\in\zz_+:\bpsi_k(\bx)\in\caln \}.
\feq
Further, let $t(\bx,\caln)$ be the time when $\bpsi_k$ become trapped within $\caln:$
\beq
t(\bx,\caln)=\inf\{n\in\zz_+:\bpsi_k(\bx)\in\caln~\forall\, k\geq n \}.
\feq
In both the definitions we use the regular convention that $\inf\emptyset =+\infty.$ Note that, since $\bpsi_k$ converges to $\bmz,$
both $\tau(\bx,\caln)$ and $t(\bx,\caln)$ are finite for all $\bx\in\Delta_M^\circ.$
\begin{lemma}
\label{fx}
For any $\theta>0$ and an open neighborhood $\caln$ of $\bmz,$ there is  an open neighborhood $U_\bx$ of $\bx$ and a constant $T_\bx\in\nn$
such that $t(\by,\caln)\leq T_\bx$ for all $\by\in U_\bx.$
\end{lemma}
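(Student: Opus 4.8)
The plan is to combine continuity of the finite iterates $\bpsi_k=\bGamma^k$ with the \emph{local} stability of the equilibrium $\bmz$. The key point is that the global attraction $\bpsi_k(\bx)\to\bmz$ by itself controls only finitely many iterates and gives no uniformity in the initial point, whereas the desired bound $t(\by,\caln)\le T_\bx$ asserts that \emph{all} iterates with $k\ge T_\bx$ lie in $\caln$, simultaneously for every $\by\in U_\bx$. Throughout I would take $\bx\in\Delta_M^\circ$: for $\bx\in\partial(\Delta_M)$ the orbit stays on the boundary and never reaches a neighbourhood of the interior point $\bmz$, so $t(\bx,\caln)=+\infty$ --- this is precisely why the remark preceding the lemma restricts finiteness of $t(\cdot,\caln)$ to interior points.

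First I would record a trapping property of $\bmz$ (its Lyapunov stability): for the given open neighbourhood $\caln\ni\bmz$ there is a smaller open neighbourhood $\caln_1$ with $\bmz\in\caln_1\subseteq\caln\cap\Delta_M^\circ$ such that any orbit entering $\caln_1$ never leaves $\caln$, i.e.\ $\bz\in\caln_1$ implies $\bpsi_k(\bz)\in\caln$ for all $k\ge0$. This holds because $\bmz$ is locally asymptotically stable: by Proposition~\ref{radius} the spectral radius of the Jacobian $\bD_x(\bmz)$ is strictly less than one, so the standard linearization criterion (spectral radius below one implies local asymptotic stability) makes $\bmz$ a Lyapunov-stable fixed point; equivalently one may build $\caln_1$ as a sublevel set of the quadratic Lyapunov function attached to the contracting linear part.

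Next I would reach $\caln_1$ in a finite, locally uniform time. Since $\bx\in\Delta_M^\circ$ gives $\bpsi_k(\bx)\to\bmz$, there is $T_\bx\in\nn$ with $\bpsi_{T_\bx}(\bx)\in\caln_1$. The map $\bpsi_{T_\bx}=\bGamma^{T_\bx}$ is continuous and $\caln_1$ is open, so $U_\bx:=\bigl(\bpsi_{T_\bx}\bigr)^{-1}(\caln_1)$ is an open neighbourhood of $\bx$, automatically contained in $\Delta_M^\circ$ because the faces of $\Delta_M$ are $\bGamma$-invariant and $\caln_1\subseteq\Delta_M^\circ$. For any $\by\in U_\bx$ we have $\bpsi_{T_\bx}(\by)\in\caln_1$, and applying the trapping property to the orbit issued from $\bpsi_{T_\bx}(\by)$, together with the semigroup identity $\bpsi_{T_\bx+j}(\by)=\bpsi_j\bigl(\bpsi_{T_\bx}(\by)\bigr)$, yields $\bpsi_k(\by)\in\caln$ for every $k\ge T_\bx$. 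By the definition of $t(\cdot,\caln)$ this is exactly $t(\by,\caln)\le T_\bx$, which proves the lemma with this choice of $U_\bx$ and $T_\bx$.

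I expect the first step to be the main obstacle: upgrading the pointwise convergence $\bpsi_k(\bx)\to\bmz$ --- which neither controls how the entry time into $\caln$ depends on the initial condition nor rules out later excursions out of $\caln$ --- to a genuine ``once in, always in'' trapping region. Local asymptotic stability of $\bmz$ is exactly what furnishes this region; the remainder is the continuity of $\bGamma^{T_\bx}$ and the cocycle property of the orbit map, both routine.
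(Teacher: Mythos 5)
Your argument has exactly the same skeleton as the paper's proof: produce a forward-invariant (trapping) open neighborhood of $\bmz$ inside $\caln$ from local stability, use pointwise convergence of the orbit of $\bx$ to find a finite time $T_\bx$ at which it enters that neighborhood, pull the neighborhood back through the continuous map $\bGamma^{T_\bx}$ to obtain $U_\bx,$ and conclude by forward-invariance. In the paper's proof your $\caln_1$ is the forward-invariant set $\calu$ and your $T_\bx$ is $\tau(\bx,\calu);$ the continuity-plus-trapping step is identical.

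The one genuine problem is your source for the trapping region. You obtain local asymptotic stability of $\bmz$ from Proposition~\ref{radius} (spectral radius of the Jacobian at $\bmz$ strictly less than one) via the linearization criterion. As standalone mathematics this implication is fine, but inside this paper it is circular: the paper proves Proposition~\ref{radius} in Section~\ref{pr} by \emph{invoking} the asymptotic stability of $\bmz$ --- it explicitly cites ``the paragraph above \eqref{asa},'' which sits inside the very proof of Lemma~\ref{fx} you are writing --- to conclude that the eigenvalues of the Jacobian restricted to $W_M$ have modulus at most one, and only afterwards rules out modulus-one eigenvalues by a perturbation argument. So stability is an input to Proposition~\ref{radius}, not a consequence of it, and your proof together with the paper's proof of that proposition would form a loop. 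The paper avoids this by deriving stability directly from external facts: under Assumption~\ref{assume7}, $\bGamma$ is a diffeomorphism \cite{losakin}, in particular locally Lipschitz, and Theorem~1.2 of \cite{lyapa}, combined with the global interior attraction of Theorem~\ref{bsa}, yields \eqref{asa}, i.e. the Lyapunov stability of $\bmz,$ equivalently the existence of a forward-invariant open neighborhood $\calu\subset\caln.$ If you replace your appeal to Proposition~\ref{radius} by this derivation, the remainder of your proof goes through verbatim and coincides with the paper's argument.
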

\begin{proof}[Proof of Lemma~\ref{fx}]
Under the conditions stated in Assumption~\ref{assume7}, $\bGamma$ is a diffeomorphism \cite{losakin}. In particular, $\bGamma$ is a locally Lipschitz map.
Hence, we may apply Theorem~1.2 in \cite{lyapa} to conclude that the interior equilibrium $\bmz$ is asymptotically stable for
the dynamical system $\bpsi_k.$ That is, for any $\veps>0$ there exists $\delta>0$ such that
\beqn
\label{asa}
\|\bpsi_0-\bmz\|\leq\delta \quad \Longrightarrow \quad \|\bpsi_k-\bmz\|\leq \veps~\,\forall\,k \geq 0.
\feqn
Equivalently, for any open neighborhood $\caln$ of $\bmz$ there exists an open neighborhood of $\bmz,$ say $\calu,$ included in $\caln$
such that $\bGamma(\calu)\subset \calu$ (that is $\calu$ is forward-invariant for $\bGamma$).
\par
Let $\calu$ be a forward-invariant open neighborhood of $\bmz$ included in $\caln.$
Since $\calu$ is an open set and $\bpsi_{\tau(\bx,\calu)}(\by)$ is a continuous function of the initial state $\by,$ one can choose an
open neighborhood $V_\bx$ of $\psi_{\tau(\bx,\calu)}(\bx)$ and a constant $\veps_\bx>0$ such that $V_\bx\subset \calu$ and,
furthermore, $\|\by-\bx\|<\veps_\bx$ implies $\bpsi_{\tau(\bx,\calu)}(\by)\in V_\bx.$ Set $U_\bx=\big\{\by\in\Delta_M^\circ: \|\by-\bx\|<\veps_\bx\big\}.$
\end{proof}
\begin{corollary}
\label{f}
Let $\caln$ be any open neighborhood of $\mz$ and $K$ be a compact subset of $\Delta_M^\circ.$ Then
there exists $T=T(K,\caln)\in\nn$ such that $t(\bx,\caln)\leq T$ for all $\bx\in K.$
\end{corollary}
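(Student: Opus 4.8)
The plan is to upgrade the pointwise trapping bound of Lemma~\ref{fx} to a bound that is uniform over the compact set $K,$ by a routine compactness (Heine--Borel) argument. First I would invoke Lemma~\ref{fx} at each point $\bx\in K.$ Under Assumption~\ref{assume7} we have $\bpsi_k(\bx)\to\bmz$ for every $\bx\in\Delta_M^\circ,$ so the hypotheses of the lemma are met, and it supplies an open neighborhood $U_\bx$ of $\bx$ together with an integer $T_\bx\in\nn$ such that $t(\by,\caln)\leq T_\bx$ for all $\by\in U_\bx.$

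Next, I would observe that the family $\{U_\bx:\bx\in K\}$ is an open cover of $K.$ Since $K$ is a compact subset of $\Delta_M^\circ,$ I extract a finite subcover $U_{\bx_1},\ldots,U_{\bx_n}$ and set $T=\max_{1\leq j\leq n} T_{\bx_j}.$

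Finally, for an arbitrary $\bx\in K$ there is an index $j$ with $\bx\in U_{\bx_j},$ whence $t(\bx,\caln)\leq T_{\bx_j}\leq T.$ Because the finite subcover, and hence $T,$ depends only on $K$ and $\caln,$ this yields the asserted bound with $T=T(K,\caln).$

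I do not anticipate a genuine obstacle here: all of the substance sits in the local trapping estimate of Lemma~\ref{fx} (which itself rests on the asymptotic stability of $\bmz$ established there), and the corollary is merely its globalization. The only point deserving a word of care is that the neighborhoods $U_\bx$ are open in the subspace $\Delta_M^\circ$ and that $K\subset\Delta_M^\circ,$ so that compactness indeed furnishes a finite subcover and the finitely many thresholds $T_{\bx_j}$ can be maximized into a single uniform bound.
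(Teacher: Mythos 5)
Your proposal is correct and follows essentially the same route as the paper: apply Lemma~\ref{fx} pointwise, cover $K$ by the resulting neighborhoods $U_\bx,$ extract a finite subcover by compactness, and take $T$ to be the maximum of the finitely many thresholds. This matches the paper's proof of Corollary~\ref{f} step for step.
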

\begin{proof}[Proof of Corollary~\ref{f}]
Let $U_\bx$ and $T_\bx$ be as in Lemma~\ref{fx}.
Consider a cover of $K$ by the union of open sets $\bigcup_{x\in K} U_\bx.$ Since $K$ is compact, we can choose a finite subcover,
say $\bigcup_{j=1}^m U_{\bx_j}.$ Set $T(K,\caln)=\max_{1\leq j\leq m} t(\bx_j,\caln).$
\end{proof}
\begin{remark}
\label{rp1}
The claim is a ``uniform shadowing property" of $\bGamma.$ Recall that
a sequence of points $(\bx_k)_{k\leq T}$ where $T\in\nn\bigcup \{+\infty\}$ is a $\delta$-pseudo orbit of the dynamical system $\bpsi_k$
if $\|\bGamma(\bx_k)-\bx_{k+1}\| \leq \delta$ for $k=0,\ldots,T-1$ (if $T$ is finite then the definitions of $\delta$-pseudo orbit and $\delta$-chain are equivalent) and that a $\delta$-pseudo orbit is said to be shadowed by a true orbit $\big(\bpsi_k(\bx)\big)_{k\in \zz_+}$ if $\|\bpsi_k(\bx)-\bx_k\|\leq \veps$ for all $k \geq 0.$
The proposition is basically saying that since $\bpsi_k$ is asymptotically stable, then 1) any infinite $\delta$-pseudo orbit starting at $\bx_0$ is
$\veps$-shadowed by the true orbit $\big(\bpsi_k(\bx_0)\big)_{k\geq 0}$ provided that $\delta>0$ is small enough; and 2) the property (more precisely, a
possible value of the parameter $\delta$ given $\veps>0$) is uniform over compact sets with respect to the initial point $\bx_0.$
\end{remark}
\subsection{Proof of Theorem~\ref{xit}}
\label{exit}
\begin{proof}[Proof of (i)]
Let $\odin_A$ denote the indicator of the event $A,$ that is $\odin_A=1$ if $A$ occurs and $\odin_A=0$ otherwise.
For $\theta$ introduced in the statement of the theorem and $\caln_\theta$ defined in \eqref{nd}, let $L_N$ be the number of times the Markov chain $\bXn$ \textit{returns} to $\caln_\theta$ before it hits the boundary $\partial(\Delta_M)$ for the first time. That is,
\beqn
\label{trout}
L_N=\sum_{k=1}^{\sigma_N} \one{\bXn_k\in \caln_\theta},
\feqn
where
\beq
\sigma_N=\inf\big\{k>0:\bXn_k\in \partial(\dmn\big)\}
\feq
is the first hitting time of the boundary. Since for a fixed $N\in\nn,$ the state space of the Markov chain $\bXn$ is finite and
\beq
P_N\big(\bx,\partial\big(\dmn\big)\big)=\sum_{\by\in \partial(\dmn)}P_N(\bx,\by)>0
\feq
for all $\bx\in\dmn,$ we have:
\beq
P_\bx(0\leq L_N <\infty)=P_\bx(1\leq \sigma_N<\infty)=1\qquad \forall\,\bx\in \dmn^\circ.
\feq
Note that while counting the number of \textit{returns} (rather than \textit{visits}) to $\caln_\theta$ in \eqref{trout}, we don't account for the initial state $\bXn_0$ even if
$\bXn_0\in \caln_\theta.$
\par
Denote
\beq
G_N=\{\sigma_N>0\},\qquad  \qquad \ol G_N=\{\sigma_N=0\},
\feq
and
\beqn
\label{xg}
\xi_N=\inf\{k\geq 0: \bXn_k\in \caln_\theta\}.
\feqn
That is $G_N$ is the event that $\bXn$ returns to $\caln_\theta$ at least once, $\ol G_N$ is its complement, and $\xi_N$ is the time of the first visit (rather than the first return, which means
$\bXn_0$ counts in \eqref{xg} if it belongs to $\caln_\theta$). Note that $P_\bx(\xi_N=+\infty)>0$ for all $\bx\in\dmn$ because the chain can escape to the boundary of the simplex before it visits to $\caln_\theta,$
and the boundary is an absorbing set for $\bXn.$
\par
Let $\eta>0$ satisfy \eqref{eta1}, and recall $K_{\theta,\eta}$ from \eqref{etad}.
Proposition~\ref{key} implies that there exist (deterministic, non-random) real constant $\veps_\theta>0$
and integer $T>0$ such that
\beq
\bXn_0\in K_{\theta,\eta}~\mbox{\rm and}~\big\|\bXn_{k+1}-\bGamma\big(\bXn_k\big)\big\|<\veps_\theta~
\mbox{\rm for}~k \leq T-1\quad \Rightarrow \quad \bXn_T\in \caln_\theta.
\feq
It follows from \eqref{bound11} that
\beqn
\label{g}
P_\bx(\ol G_N)\leq 2T Me^{-\frac{N\veps_\theta^2}{2}}\qquad\qquad  \forall\,\bx\in K_{\theta,\eta}.
\feqn
For $\bx\in K_{\theta,\eta},$ write
\beq
P_\bx(\cale_N)=P_\bx(\cale_N; G_N)+P_\bx(\cale_N; \ol G_N),
\feq
and notice that
\beq
\lim_{N\to\infty}P_\bx(\ol G_N)=0
\feq
by virtue of \eqref{g}. Moreover,
\beqn
\label{simsim}
P_\bx(\cale_N; G_N)=P_\bx(\cale_N\,|\,G_N)P_\bx(G_N)=P_{\mu_{N,\bx}}(\cale_N)P_\bx(G_N),
\feqn
where $\mu_{N,\bx}$ is the distribution of $\bXn_{\xi_N}$ under the conditional measure $P_\bx(\cdot\,|\,G_N),$ with $\xi_N$ introduced in \eqref{xg}.
Hence, in order to prove the first part of the theorem, it suffices to show that for any sequence of (discrete) probability measures $\mu_N,$ each supported on
$\caln_\theta\bigcap \dmn$ with the corresponding $N,$ we have
\beq
\lim_{N\to\infty} P_{\mu_N}(\cale_N)=1
\feq
Since
\beq
P_{\mu_N}(\cale_N)=\sum_{\by\in\caln_\theta\bigcap \dmn} P_\by(\cale_N)\,\mu_n(\by),
\feq
it suffices to show that
\beq
\lim_{N\to\infty} P_\by(\cale_N)=1,
\feq
and the convergence is uniform over $\by\in \caln_\theta.$ More precisely,
\begin{lemma}
\label{delta3}
For any $\veps>0$ there exists an integer $N_0=N_0(\veps)>0$ such that
\beq
P_\by(\cale_N)\geq 1-\veps
\feq
for all $N>N_0$ and $\by\in \caln_\theta \bigcap \dmn.$
\end{lemma}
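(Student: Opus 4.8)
The plan is to run a renewal (competing-outcomes) argument over the successive returns of $\bXn$ to $\caln_\theta$, comparing the three mutually exclusive ways a block of the trajectory can terminate: a \emph{good} jump, in which exactly one $J^*$-coordinate is sent to zero in a single multinomial step while all others stay positive; a \emph{bad} jump, in which a non-$J^*$ coordinate hits zero or at least two coordinates vanish at once; and a \emph{failure}, a single step whose increment $\|\bXn_{k+1}-\bGamma(\bXn_k)\|$ exceeds $\veps_\theta$ without reaching the boundary. Proposition~\ref{key}, applied to the pair $(\caln_\theta,K_{\theta,\eta})$, furnishes the threshold $\veps_\theta$ and an integer $T$ with the property that whenever $T$ consecutive increments of a block are all smaller than $\veps_\theta$ and no coordinate vanishes, the chain is driven back into $\caln_\theta$. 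This lets me chop the trajectory up to $\nu_N$ into blocks of length at most $T$ anchored at the returns to $\caln_\theta$, to which the strong Markov property applies.

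First I would establish uniform one-step estimates over $\bx\in\caln_\theta\cap\dmn$. For the good jump: since $\alpha_\bx<\alpha+\theta$ on $\caln_\theta$ and the minimizing index lies in $J^*$, the probability that this coordinate receives none of the $N$ trials while every other type is represented is at least $\tfrac12(1-\alpha-\theta)^N$ for $N$ large, the correction from demanding the other coordinates be positive being exponentially smaller because the remaining $\Gamma_i(\bx)$ stay bounded away from zero on $\caln_\theta$. For the bad jump: every non-$J^*$ index has $\Gamma_i(\bx)\geq\beta_\bx>\beta-\theta$, so emptying such a coordinate has probability at most $(1-\beta+\theta)^N$, and emptying two at once is smaller still. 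For the failure: the Hoeffding bound \eqref{bound11} gives at most $2Me^{-N\veps_\theta^2/2}$ per step. A union bound over the $\le T$ steps of a block produces per-block upper bounds $\bar b=O\big(T(1-\beta+\theta)^N\big)$ and $\bar f=2TMe^{-N\veps_\theta^2/2}$, and the first-step estimate alone gives the lower bound $\underline g\ge\tfrac12(1-\alpha-\theta)^N$.

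With these uniform bounds the renewal estimate is immediate. Conditioning on surviving each block, the strong Markov property gives $P(\text{block ends badly or fails}\mid\text{survived})\le\bar b+\bar f$ while $P(\text{block is good})\ge\underline g>0$, so summing the resulting geometric series yields $P_\by(\cale_N^c)\le(\bar b+\bar f)/\underline g$ uniformly in $\by\in\caln_\theta\cap\dmn$ (the chain cannot recur forever, as $\underline g>0$). This ratio equals
\[
O\!\Big(T\Big(\tfrac{1-\beta+\theta}{1-\alpha-\theta}\Big)^{N}\Big)+O\!\Big(TM\Big(\tfrac{e^{-\veps_\theta^2/2}}{1-\alpha-\theta}\Big)^{N}\Big),
\]
and both terms vanish as $N\to\infty$: the first because $\theta<\tfrac{\beta-\alpha}{2}$ forces $1-\beta+\theta<1-\alpha-\theta$, and the second precisely because of the threshold hypothesis \eqref{hc}, namely $1-\alpha-\theta>e^{-\veps_\theta^2/2}$. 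Choosing $N_0$ so that the right-hand side stays below $\veps$ for $N>N_0$ gives the claim.

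The hard part will be the two qualitative inputs underlying these clean one-step estimates, rather than the estimates themselves. I must verify that $\caln_\theta$ is genuinely a neighborhood of $\bmz$ on which the least-fit index set is \emph{frozen}, i.e. $J^*_\bx=J^*$ and the values $\{\Gamma_i(\bx)\}$ remain well separated into the $\alpha$-group and the rest; this follows from continuity of $\bGamma$ and the strict gap $\alpha<\beta$ once $\theta<\tfrac{\beta-\alpha}{2}$, and it is exactly what guarantees that the dominant escape empties a $J^*$-coordinate. I must also confirm that small-noise blocks keep the chain inside a fixed compact subset of $\Delta_M^\circ$, so that the constants $\veps_\theta$ and $T$ of Proposition~\ref{key} remain applicable throughout; here the permanence of $\bpsi_k$ together with the shadowing interpretation in Remark~\ref{rp1} supplies the needed forward-invariant tube around the orbit converging to $\bmz$. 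Finally, treating every large-noise step conservatively as a terminal failure is what makes the three-way race rigorous despite the absence of genuine independence between blocks.
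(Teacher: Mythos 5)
Your overall strategy is the same as the paper's: decompose the trajectory at successive returns to $\caln_\theta$, run a three-way race between a good hit (rate at least $(1-\alpha-\theta)^N$), a bad hit, and a failure to return (bounded via Proposition~\ref{key} plus Hoeffding, as in \eqref{g}), then conclude by comparing exponential rates using $\theta<\tfrac{\beta-\alpha}{2}$ and \eqref{hc}. However, one step fails as written: the per-block bad-jump bound $\bar b=O\bigl(T(1-\beta+\theta)^N\bigr)$. You obtain it by a union bound over the $\leq T$ steps of a block, using the estimate $\Gamma_i(\bx)\geq \beta-\theta$ for $i\notin J^*$ --- but that estimate is part of the definition of $\caln_\theta$ and is therefore available only for the \emph{first} step of a block. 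At mid-block points (which lie in or near $K_{\theta,\eta}$, not in $\caln_\theta$) a non-$J^*$ coordinate can be small, and then, by the replicator form \eqref{replica} with the fitness of Assumption~\ref{assume7} bounded above and below, its $\Gamma$-value is comparably small: zeroing that coordinate is then both cheap (probability $(1-\Gamma_j(\bx))^N$, not exponentially dominated by $(1-\alpha-\theta)^N$) and a \emph{small-noise} step (increment $\Gamma_j(\bx)<\veps_\theta$), so it is covered neither by your bad-jump estimate nor by your Hoeffding failure bound. Your flagged fix --- small-noise steps stay in a compact tube on which the $\Gamma_j$ are bounded below by some $c>0$ --- does not close the hole, because nothing makes $c\geq\beta-\theta$, or even $c\geq\veps_\theta$.

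The paper's bookkeeping is arranged precisely so that this case never needs to be estimated. It classifies each visit to $\caln_\theta$ by the landing region of the single step out of it (a good face, the region $\partial_{\theta,\eta}$ near the bad faces, or $K_{\theta,\eta}$), and lumps everything that happens after a landing in $K_{\theta,\eta}$ into the event $\ol G_N$ that the chain never returns to $\caln_\theta$. A mid-excursion bad hit implies no return (the boundary is absorbing, since $\Gamma_j(\bx)=0$ when $x(j)=0$), and $P_\bz\bigl(\ol G_N\bigr)$ is bounded by \eqref{g} alone: if all of the first $T$ steps had noise below $\veps_\theta$, the path would be an $\veps_\theta$-chain from $K_{\theta,\eta}$, so Proposition~\ref{key} would place it back in $\caln_\theta$, a contradiction; hence some step has large noise and Hoeffding applies, with no per-step control of where the excursion dies. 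You can repair your version either by adopting this lumping, or by noting that Proposition~\ref{key} itself forces every point reachable from $K_{\theta,\eta}$ by an $\veps_\theta$-chain in fewer than $T$ steps to satisfy $\Gamma_j\geq\veps_\theta$ for all $j$ --- otherwise one could extend the chain onto the invariant face $\{x(j)=0\}$ and keep it there forever, contradicting the proposition's conclusion --- which is exactly the statement your ``compact tube'' remark actually requires.
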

\begin{proof}[Proof of Lemma~\ref{delta3}]
Recall $L_N$ from \eqref{trout}. If $L_N\geq 1,$ let $S_{N,1},\ldots, S_{L_N,N}$ be successive return times of the Markov chain $\bXn$
to $\caln_\theta.$ Namely, we set $S_{0,N}=0$ and define recursively,
\beq
S_{k,N}=\inf\{j>S_{k-1,N}: \bXn_j\in \caln_\theta\}.
\feq
As usual, we assume that $\inf \emptyset =+\infty.$ Let $S_{L_N+1,N}$ be the first hitting time of the boundary:
\beq
S_{L_N+1,N}=\inf\{j>S_{L_N,N}:\bXn_j\in \partial(\Delta_M)\}.
\feq
Finally, for $k\in\zz_+$ define
\beq
\bYn_k:=\left\{
\begin{array}{ll}
\bXn_{S_{k,N}}~&\mbox{\rm if}~k\leq L_N+1
\\
[2mm]
\bXn_{S_{L_N+1,N}}~&\mbox{\rm if}~k> L_N+1.
\end{array}
\right.
\feq
Then $\big(\bYn_j\big)_{j\in\zz_+}$ is a Markov chain on $\big(\caln_\theta \bigcup \partial(\Delta_M)\big)\bigcap \dmn$
with all states on the boundary $\partial\big(\dmn\big)$ being absorbing. Denote by $Q_N$ transition kernel of this chain.
\par
Recall $V_M^*$ from \eqref{vm1}. We then have for any $\bx\in \caln_\theta \bigcap \dmn:$
\beqn
\nonumber
P_\bx(\cale_N)&\geq& \sum_{m=0}^\infty \sum_{\by\in \caln_\theta\bigcap \dmn} \sum_{\bz\in V_M^*} Q_N^m(\bx,\by)P_N (\by,\bz)\geq (1-\alpha-\theta)^N
\sum_{m=0}^\infty \sum_{\by\in \caln_\theta\bigcap \dmn} Q_N^m(\bx,\by)
\\
\label{239}
&=&
(1-\alpha-\theta)^N \sum_{k=0}^\infty P_\bx(L_N \geq k)=(1-\alpha-\theta)^N\big(1+E_\bx(L_N)\big),
\feqn
where $L_N$ is the number of returns to $\caln_\theta$ introduced in \eqref{trout}.
The above formula bounds $P_\bx(\cale_N)$ from below by the probability of the event ``$\bXn$ hits the boundary at one of the vertices
of the set $J^*,$ moreover, the last site visited by $\bXn$ before passing to the boundary lies within $\caln_\theta$".
The index $m$ in \eqref{239} counts the number of returns to $\caln_\theta,$ $\by$ is the last visited state at $\Delta_M^\circ,$ and the estimate
\beq
P_N(\by,\bz)\geq (1-\alpha-\theta)^N,\qquad \qquad \by\in \caln_\theta,\, \bz\in J^*,
\feq
is a direct implication of \eqref{model} and the definition of $\caln_\theta.$
\par
To complete the proof of Lemma~\ref{delta3}, we will now decompose an (almost surely) certain event ``$\bXn$ hits the boundary eventually" in a way similar
to \eqref{239} and compare two estimates. Let (cf. \eqref{etad})
\beq
\partial_{\theta,\eta}:=\big\{x\in\Delta_M^\circ\backslash\caln_\theta:\max_{j\not\in V_M^*}x(j)< \eta\big\},
\feq
so that the interior of the simplex partitions as $\Delta_M^\circ=\caln_\theta \bigcup K_{\theta,\eta}\bigcup \partial_{\theta,\eta}.$ Similarly to \eqref{239}, for the probability to eventually hit the boundary starting from a point $\bx\in \caln_\theta\bigcap \dmn,$ we have:
\beqn
\nonumber
1&\leq &
\sum_{m=0}^\infty \sum_{\by\in \caln_\theta\bigcap \dmn} \sum_{\bz\in V_M^*} Q_N^m(\bx,\by)P_N (\by,\bz)
\\
\nonumber
&&
\quad
+
\sum_{m=0}^\infty \sum_{\by\in \caln_\theta\bigcap \dmn} \sum_{\bz\in \partial_{\theta,\eta}\backslash V_M^*} Q_N^m(\bx,\by)P_N (\by,\bz)
\\
\nonumber
&&
\quad
+
\sum_{m=0}^\infty \sum_{\by\in \caln_\theta\bigcap \dmn}
\sum_{z\in \dmn\backslash (N_\theta\bigcup \partial_{\theta,\eta})} Q_N^m(\bx,\by)P_N (\by,\bz)P_\bz\big(\ol G_N\big)
\\
\nonumber
&\leq &
\sum_{m=0}^\infty \sum_{\by\in \caln_\theta\bigcap \dmn} \sum_{\bz\in V_M^*} Q_N^m(\bx,\by)P_N(\by,\bz)
\\
\label{239a}
&&
\quad
+
(1-\beta+\theta)^{\eta N}\big(1+E_\bx(L_N)\big)
+
\big(1+E_\bx(L_N)\big)\cdot 2M Te^{-\frac{N\veps_\theta^2}{2}},
\feqn
where we used \eqref{g} to estimate $P_\bz\big(\ol G_N\big).$
\par
It follows from \eqref{239} and the assumptions of theorem that
\beq
&&
\frac{(1-\beta+\theta)^{\eta N}\big(1+E_\bx(L_N)\big)
+
\big(1+E_\bx(L_N)\big)\cdot 2MT e^{-\frac{N\veps_\theta^2}{2}}}
{\sum_{m=0}^\infty \sum_{\by\in \caln_\theta\bigcap \dmn} \sum_{\bz\in V_M^*} R_N^m(\bx,\by)P_N (\by,\bz)}
\\
&&
\qquad
<
\frac{(1-\alpha-\theta)^N\big(1+E_\bx(L_N)\big)
+
\big(1+E_\bx(L_N)\big)\cdot 2MT (1-\alpha-\theta)^N}
{\sum_{m=0}^\infty \sum_{\by\in \caln_\theta\bigcap \dmn} \sum_{\bz\in V_M^*} R_N^m(\bx,\by)P_N (\by,\bz)}
\longrightarrow_{N\to\infty} 0.
\feq
Hence, taking now in account both \eqref{239} and \eqref{239a},
\beq
\lim_{N\to\infty} P_\bx(\cale_N)=\lim_{N\to\infty}\bigg(
\sum_{m=0}^\infty \sum_{\by\in \caln_\theta\bigcap \dmn} \sum_{\bz\in V_M^*} R_N^m(\bx,\by)P_N (\by,\bz)\bigg)
=1.
\feq
The proof of Lemma~\ref{delta3} is complete.
\end{proof}
This completes the proof of part (i) of the theorem.
\end{proof}
$\mbox{}$
\begin{proof}[Proof of (ii)]
To show that $\lim_{N\to\infty}E(\nu_N)=+\infty,$ write, similarly to \eqref{simsim},
\beq
E(\nu_N)&\geq&E(\nu_N;G_N)=E_{\mu_N}(\nu_N)P(G_N),
\feq
where $\mu_N$ is the distribution of $\bXn_{\xi_N}$ under the conditional measure $P(\cdot\,|\,G_N),$ with $\xi_N$ introduced in \eqref{xg}.
Using Proposition~\ref{key} with a compact set $K$ that includes $\bx_0$ (the limit of the initial state of $\bXn$ introduced in the conditions of the theorem) and leveraging the same argument that we employed in order to obtain \eqref{g}, one can show that $\lim_{N\to\infty} P(G_N)=1.$  Furthermore,
the estimates in \eqref{239a} show that $L_N,$ the number of returns to $\caln_\theta$ under $P_{\mu_N}$ is stochastically dominated  from
below  by a geometric variable with probability of success $e^{-\gamma N}$ for any $\gamma>0$ sufficiently small. Thus,
\beq
E_{\mu_N}(\nu_N) \geq E_{\mu_N}(L_N)\geq e^{\gamma N}.
\feq
Therefore, $\lim_{N\to\infty} E_{\mu_N}(\nu_N)=+\infty,$ and the proof of the second part of the theorem is complete.
\end{proof}
\subsection{Proof of Proposition~\ref{radius}}
\label{pr}
Write
\beq
\Gamma_i(\bx)=\frac{1+B x(i)}{1+ \bx^T\bB\bx}x(i),
\feq
where $\bB=\frac{\omega}{1-\omega}\bA.$ By the definition of the interior equilibrium, there is a constant $r>0$ such that
\beqn
\label{ins}
B\mz(i)=r\qquad \forall\,i\in S_M.
\feqn
Therefore, for $\bz=\bmz+ \bu\in \Delta_M^\circ,$ as $\bu$ approaches zero in $\rr^M,$ we have:
\beq
\Gamma_i(\bz)=\frac{1+r+B u(i)}{1+r+\bu^T\bB \bmz+\bmz^T \bB\bu}\big(\mz(i)+u(i)\big) +o(\|\bu\|).
\feq
Since $\bB^T=\bB$ and $\sum_{i\in S_M}u(i)=0,$ we have:
\beq
\bmz^T\bB\bu=\bu^T \bB \bmz=r\sum_{i\in S_M}u(i)=0.
\feq
Thus, as $\bu$ approaches $\bo\in\rr^M,$
\beq
\Gamma_i(\bz)&=&\frac{1+r+B u(i)}{1+r}\big(\mz(i)+u(i)\big) +o(\|u\|)
\\
&=&
\big(\mz(i)+u(i)\big)+\frac{\mz(i)\cdot Bu(i)}{1+r}+o(\|u\|).
\feq
Since $\bGamma$ is a diffeomorphism \cite{losakin}, we can compare this formula with the first-order Taylor expansion of $\bGamma$ around $\bmz.$
It follows that the Jacobian matrix of $\bGamma$ evaluated at $\bmz$ is given by
\beqn
\label{525}
D_{ij}:=\frac{\partial \Gamma_i}{\partial x_j}(\bmz)=\delta_{ij} +\frac{\mz(i)B_{i,j}}{1+r},\qquad i.j\in S_M,
\feqn
where $\delta_{ij}$ is the Kronecker symbol. It is easy to verify that
\beq
\bH:=\frac{1+r}{1+2r}\bD
\feq
is a transpose of a strictly positive stochastic matrix, namely $H_{ij}>0$ for all $i,j\in S_M$ and $\sum_{i\in S_M} H_{ij}=1$ for all $j\in S_M.$
\par
Note that $\bmz$ is a left eigenvector of $\bH^T$ corresponding to the principle eigenvalue $1.$  Indeed,
assuming $i\neq j,$ we deduce from \eqref{525} and the fact that $\bB^T=\bB:$
\beq
\mz(i) H^T_{ij}=\frac{1+r}{1+2r}\frac{\mz(i)B_{i,j}\mz(j)}{1+r}=\mz(j) H^T_{ji}.
\feq
Thus, stochastic matrix $\bH^T$ is transition kernel of a reversible Markov chain with stationary distribution $\bmz.$
From the reversibility of the Markov transition matrix $\bH^T,$ it follows that all the eigenvalues of $\bH,$ and hence also of $\bD,$ are real numbers (see, for instance, \cite{brem}).
\par
Recall $W_M$ from \eqref{seq}. The asymptotic stability of the equilibrium (see the paragraph above \eqref{asa}) implies that
the largest absolute value of an eigenvalue of the linear operator $\bD$ restricted to the linear space $W_M$ is at most one \cite{perko}. It follows from \eqref{525} that if $\lambda$ is this eigenvalue and $|\lambda|=1,$ then there is an eigenvalue $\eta$ of
the matrix with entries $\frac{\mz(i)B_{i,j}}{1+r}$ such that $\eta\in\{-2,0\}.$ To complete the proof of the proposition it remains to show that
this is indeed impossible. Toward this end, denote by $\bH_\bchi$ the matrix with entries $\frac{\mz(i)B_{i,j}}{1+r}$ and observe that
\beq
\det (\bH_\bchi)=(1+r)^{-M}\prod_{i=1}^M \mz(i)\cdot \det \bB.
\feq
Since $\bB$ is invertible by our assumptions, this shows that $0$ cannot be an eigenvalue of $\bH_\chi.$ Furthermore, if $\eta=-2$
is an eigenvalue of $\bH_\bchi,$ then there exists $\bnu\in W_M$ such that
\beq
\bmz\circ \bB\bnu=-2(1+r)\bnu,
\feq
where $\circ$ denotes the element-wise product introduced in \eqref{ep}. Recall now that $\bB=h\bA,$ where
$h=\frac{\omega}{1-\omega}.$ Plugging this expression into the previous formula, we obtain
\beqn
\label{cnn}
\bmz\circ \bA\bnu=-2(1+r)h^{-1}\bnu
\feqn
Now, choose a constant $\beta>1$ so close to one that $\witi \bA=\beta \bA$ satisfies the conditions of Theorem~\ref{bsa} in Appendix~B, namely $\witi \bA$ is invertible
and has exactly one positive eigenvalue.
\par
Denote $\witi \bB:=h\witi \bA=\beta \bB.$ The analogue of \eqref{ins} for $\witi \bB$ reads
\beq
\witi B\mz(i)=\beta B\mz(i)=r\beta\qquad \forall\,i\in S_M.
\feq
In this sense, $r\beta$ plays the same role for the pair $\witi \bA$ and $\witi \bB$ as $r$ does for
$\bA$ and $\bB.$ Moreover, \eqref{cnn} implies that
\beq
\frac{1}{1+r\beta }\bmz \circ \witi \bB\bnu=-2\frac{\beta(1+r)}{1+r\beta} \bnu.
\feq
Since $\big|\frac{\beta(1+r)}{1+r\beta}\big|>1,$
it follows that the spectral radius of the Jacobian of the vector field
$\witi \bGamma(\bx):=\bx\circ \frac{1+h\witi \bA\bx}{1+h\bx\witi \bA\bx}$ evaluated at $\bmz$ is greater than one.
This is a clear contradiction, since matrix $\witi \bA$ is assumed to satisfy the conditions of Theorem~\ref{bsa}, and hence the spectral radius must be at most one.
\hfill\hfill\qed
\subsection{Proof of Corollary~\ref{metac}}
\label{pmet}
For $\veps>0,$ denote $\calun_\veps:=\big\{\bx\in \dmn:\|\bx-\bmz\|<\veps\big\}.$
Since $\bGamma$ is a diffeomorphism under the conditions stated in Assumption~\ref{assume7} \cite{losakin},
Proposition~\ref{radius} implies that there exist $\veps>0$ and $\rho\in(0,1)$ such that the spectral radius of the Jacobian is strictly less than
$\rho$ within $\calun_\veps.$ Thus, $\bGamma(\calun_\veps)\subset \calun_{\rho\veps}.$
Without loss of generality we may assume that $N$ is so large that both $\calun_{\rho\veps}$ and
$\calun_\veps\backslash \calun_{\rho\veps}$ are nonempty. Indeed, for any $N_0\in\nn$ and $b>0$ the claim is trivially true for all $N\leq N_0$
if we choose $C>0$ in such a way that $1-Ce^{-bN_0}<0.$
\par
Since $\bGamma(\calun_\veps)\subset \calun_{\rho\veps}$ and $P_N(\bx,\by)>0$ for all $\bx,\by\in\dmn^\circ$
(and hence, by virtue of \eqref{ln1}, the quasi-stationary distribution $\mu_N$ puts strictly positive weights on all $\bx\in\dmn^\circ$),
for suitable $C>0$ and $b>0$ we have:
\beqn
\nonumber
\lambda_N \mu_N\big(\calun_{\rho\veps}\big)&=&\mu_N P_{\circ,N}\big(\calun_{\rho\veps}\big)=\sum_{\bx\in \dmn^\circ} \sum_{\by\in \calun_{\rho\veps}} \mu_N(x)P_N(\bx,\by)
\\
\label{cruc}
&\geq& \sum_{\bx\in \calun_\veps} \sum_{\by\in \calun_{\rho\veps}} \mu_N(\bx)P_N(\bx,\by)
\geq \mu_N\big(\calun_\veps)\big(1-Ce^{-b N}\big)
\\
\nonumber
&>&
\mu_N\big(\calun_{\rho \veps}\big)\big(1-Ce^{-b N}\big),
\feqn
which implies the claim. To obtain the crucial second inequality in \eqref{cruc}
\beq
\sum_{\by\in \calun_{\rho\veps}} P_N(\bx,\by)=P_N\big(\bx,\calun_{\rho\veps}\big)\geq 1-Ce^{-bN}
\feq
one can rely on the result of Proposition~\ref{key} and use an argument similar to the one which led us to \eqref{g}.

\hfill\hfill\qed
\section{Conclusion}
\label{conclude}
In this paper we examined the long-term behavior of a generalized Wright-Fisher model. Our results hold for a broad class
of Wright-Fisher models, including several instances that have been employed in experimental biology applications.
Our primary motivation for this generalized Wright-Fisher model came from research conducted on
gut gene-microbial ecological networks \cite{pro}.
\par
The main results of the paper are:
\begin{itemize}
\item[-] A maximization principle for deterministic replicator dynamics, an analogue of Fisher's fundamental theorem of natural
selection, stated in Theorem~\ref{mdns}.
In this theorem, we show that the average of a suitably chosen reproductive fitness of the mean-field model is non-decreasing with time,
and is strictly increasing outside of a certain chain-recurrent set. The proof relies on a ``fundamental theorem of dynamical systems" of Conley \cite{conley}.
\item[-] Proposition~\ref{cava}, which is an implication of the result in Theorem~\ref{mdns} for stochastic dynamics of a class
of  Wright-Fisher models.
\item[-] Gaussian approximation scheme constructed for the generalized Wright-Fisher model in Theorem~\ref{da}.
\item[-] Theorem~\ref{th1}, which gives an exponential lower bound on the first time that the error of mean-field approximation exceeds a given threshold.
\item[-] Theorem~\ref{xit}, which specifies the route to extinction in the case that the following three conditions are satisfied:
1) the mean-field dynamical system has a unique interior equilibrium; 2) boundary of the probability simplex is repelling for the mean-field map;
and 3) the stochastic mixing effect (in certain rigorous sense) dominates extinction forces.
\item[-] Proposition~\ref{radius} that elaborates on the mechanism of convergence to the equilibrium in the mean-field model of Theorem~\ref{xit}.
\end{itemize}
We next outline several directions for extensions and future research.
\begin{enumerate}
\item The competitive exclusion principle predicts that only the fittest species will survive in a competitive environment.
In \cite{hutch}, Hutchinson addressed the \textit{paradox of the plankton} which is a discrepancy between this principle and the observed diversity of the plankton.
One of the suggested resolutions of this paradox is that the environmental variability might promote the diversity.
Several mathematical models of species persistence in time-homogeneous environments have been proposed \cite{benspers, changing, re,  re-tumor, coex, re1}.
In order to maintain the diversity, environmental factors must fluctuate within a broad range supporting a variety of contradicting trends and thus promoting the survival of various species. The theory of permanence and its stochastic counterpart, the theory of stochastic persistence \cite{benspers}, is an alternative (but not necessarily mutually excluding)
mathematical framework able to explain the coexistence of interacting species in terms of topological properties of the underlying dynamical system.
Extension of our results to systems in fluctuating environments seems to be a natural direction for future research with important implications regarding the dynamical interactions
between microbial communities, host gene expression, and nutrients in the human gut.

\item We believe that Theorem~\ref{xit} can be extended to a broad class of stochastic population models with mean-field vector-field $\bGamma$ promoting
cooperation between the particles (cells/microorganisms in applications). Among specific biological mechanisms promoting forms of cooperation are:
public goods \cite{nonlim, spgoods, altruism, pgoods5, pgoods, tale}, relative nonlinearity of competition \cite{nonl3, updates, nlpromote}, population-level randomness \cite{altruism, stat65, kroumis}, and diversification of cells into distinct ecological niches \cite{tinkering, aggreg}. Public goods models are of a special interest for us because of a possibility of an application to  to the study of a gene-microbial-metabolite ecological network in the human gut \cite{pro}. A public good is defined in \cite{nonlim} as any benefit that is ``simultaneously non-excludable (nobody can be excluded from its consumption) and non-rivalrous (use of the benefit by one particle does not diminish its availability to another particle)". For instance, an alarm call against a predator is a pure public good while natural resources (e.\,g. the air) and enzyme production in bacteria can be approximated to public goods \cite{nonlim}. Public goods games in microbial communities have been considered by several authors, see \cite{nonlim, mgame9, spgoods, tale} for a review. An important feature of the public goods game is the intrinsic non-linearity of the fitness \cite{nonlim, tale}.

\item In contrast to non-rivalrous interactions captured by public goods games, competitive interaction between cells can lead to negative frequency dependence,
see \cite{negac, nlinear} and references therein. A classical example of negative frequency dependence is animals foraging on several food patches \cite{nlinear}.
We believe it is of interest to explore the path to extinction of this class of models in the spirit of analysis undertaken in Section~\ref{exits}.

\item An interesting instance of the Wright-Fisher model with an replicator update rule was introduced in \cite{kroumis}.
This is a mathematical model for cooperation in a phenotypic space, and is endowed with a reach additional structure
representing phenotypic traits of the cells in addition to the usual division into two main categories/types, namely cooperators and defectors.
In the exact model considered in \cite{kroumis}, the phenotypic space is modelled as a Euclidean space $\zz^d.$ However, a variation
with finite population (for instance, restricting the phenotypic space to a cube in $\zz^d$) can be studied in a similar manner. Extending the result of Theorem~\ref{xit}
to a variation of this Wright-Fisher model is challenging, but could potentially contribute to a better understanding of
mechanisms forming extinction trajectory in stochastic population models.

\item In Section~\ref{exits} we investigated the phenomenon of extinction of a long-lived closed stochastic population.
The manner in which the population is driven to extinction depends on whether the extinction states are repellers or attractors of the mean-field dynamics
(cf. \cite{assaf, assaf1, metapark}).
In the latter case the extinction time is short because with a high probability the stochastic trajectory follows the mean-field path closely
for a long enough time to get to a close proximity (in a suitable phase space) of the extinction set, and then it fixates there via small noise fluctuations.
In contrast, in the former scenario, attractors of the deterministic dynamics act as traps or pseudo-equilibria for the stochastic model and thus create metastable stochastic states.
While Theorem~\ref{xit} is concerned with a model of this kind, Conjecture~\ref{conja} indicates the possibility of extending this result to a situation where all
attractors of the mean-field model belong to the boundary of the simplex $\Delta_M.$

\item The proof of Theorem~\ref{da} relies on a representation of $\bXn_k$ as partial sums of i.\,i.\,d. random indicators (see equations \eqref{z} and \eqref{c}), and can be
extended beyond the i.\,i.\,d. setting. In particular, it can be generalized to cover the Cannings exchangeable model and models in fluctuating environments. We believe that
with a proper adjustment, the result also holds for a variation of the model with $N$ depending on time \cite{gompert, hulu, isuka} (this is, for instance, well known for
branching processes with immigration). This theorem can also be extended to the \textit{paintbox models}  \cite{paintbox} and to a class of Cannings models in phenotypic spaces with weak and fast decaying dependency between phenotypes \cite{kroumis}.

\item Finally, proving Conjectures~\ref{coca} and~\ref{coca1} would shed further light on the relations among metastability, quasi-stationary distributions,
and the extinction trajectory for the underlying Wright-Fisher model.
\end{enumerate}
\section*{Acknowledgements}
We would like to thank Ozgur Aydogmus for a number of illuminating discussions. This work was supported by the
Allen Endowed Chair in Nutrition and Chronic Disease Prevention, and the National Institute of Health (R35-CA197707 and P30E5029067).
A.~R. is supported by the Nutrition, Statistics \& Bioinformatics Training Grant T32CA090301.

\section{Appendices}
\subsection*{Appendix A. Absorption states and stochastic equilibria}
In this appendix we explore the connection between
basic geometric properties of the vector field $\bGamma$ and the structure of closed communication classes of the Markov chain $\bXn.$ In other words, we study the configuration of absorbing states and identify possible supports of stationary distributions according to the properties of $\bGamma$.  We remark that the results of this section are included for the sake of completeness and are not used anywhere else in the reminder of the paper. We have made the section self-contained and included the necessary background on the Markov chain theory.
\par
The main result of the appendix is stated in Theorem~\ref{bc} where a geometric characterization of recurrent classes is described. The general structure
of recurrent classes of $\bXn$ in this theorem holds universally for the Wright-Fisher model \eqref{model} with any update function $\bGamma.$
\par
Recall \eqref{sjn}. The discussion in this section is largely based on the fact that
\beqn
\label{gr}
P_N(\bx,\by)>0 \quad
\Longleftrightarrow \calc(\by)\subset \calc\big(\bGamma(\bx)\big)\quad
\Longleftrightarrow
\quad P_N(\bx,\bz)>0~\mbox{\rm for all}~\bz\in \Delta_{[\calc(\by)],M}.
\feqn
This simple observation allows one to relate the geometry of zero patterns in the vector field to communication properties, and hence asymptotic behavior, of $\bXn.$
\par
From the point of view
of the model's adequacy in potential applications, \eqref{gr} is a direct consequence of the fundamental assumption of the Wright-Fisher model
that  particles in the population update their phenotypes independently of each other and follow the same stochastic protocol. Mathematically, this
assumption is expressed in identity \eqref{c}, whose right-hand side is a sum of independent and identically distributed random indicators.
To appreciate this feature of the Wright-Fisher model, we remark that while \eqref{gr}
holds true for an arbitrary model \eqref{model}, that feature does not hold true, in general, for two most natural generalization
of the Wright-Fisher model, namely the Cannings exchangeable model \cite{ewens} and the pure-drift Generalized Wright-Fisher process of \cite{app17}.
\par
Recall that two states $\bx,\by\in \dmn$ of the finite-state Markov chain
$\bXn$ are said to communicate if there exist $i,j\in\nn$ such that $P_N^i(\bx,\by)>0$ and $P_N^j(\by,\bx)>0.$
If $\bx$ and $\by$ communicate, we write $\bx\leftrightarrow \by.$ The binary relation $\leftrightarrow$ partitions the state space $\dmn$ into a finite number of disjoint equivalence classes, namely $\bx,\by$ belong to the same class if and only if $\bx\leftrightarrow \by.$  A
state $\bx\in \dmn$ is called absorbing if $P_N(\bx,\bx)=1,$ recurrent if $\bXn$ starting at $\bx$ returns to $\bx$ infinitely often with probability one,
and transient if, with probability one, $\bXn$ starting at $\bx$ revisits $\bx$ only a finite number of times. It turns out that each communication class
consists of either 1) exactly one absorbing state; or 2) recurrent non-absorbing states only; or 3) transient states only. No other behavior is possible due
to zero-one laws enforced by the Markov property \cite{lawler}.  A communication class is called closed or recurrent if it belongs to the first or second category, and transient if it belongs to the third one. The chain is called irreducible if there is only one (hence, recurrent) communication class, and aperiodic if $P_N^j(\bx,\by)>0$ for some $j\in\nn$ and all $\bx,\by\in\dmn.$ If $C_1,\ldots,C_r$ are disjoint recurrent classes of $\bXn,$ then the general form of its stationary distribution is $\pi(\bx)=\sum_{i=1}^r \alpha_i \pi_i(\bx)$ where $\pi_i$ is the unique stationary distribution supported on $C_i$ (that is $\pi_i(\bx)>0$ if and only if $\bx\in C_i$) and $\alpha_i$ are arbitrary non-negative numbers adding up to one. If the chain is irreducible, then the stationary distribution is unique and is strictly positive on the whole state space $\dmn.$ An irreducible finite-state Markov chain is aperiodic if and only if $\pi(\bx)=\lim_{k\to\infty} P_N^k(\by,\bx)$ for any $\bx,\by\in\dmn,$ where $\pi$ is the unique stationary distribution of the chain \cite{lawler}.
\par
Let $\cala_N \subset \Delta_M$ denote the set of absorbing states of the Markov chain
$\bXn.$ It readily follows from \eqref{model} that $\cala_N\subset V_M,$ and a vertex $\be_j\in V_M$ is an absorbing (fixation) state  if and only if $\bGamma(\be_j)=\be_j.$ In particular, $\cala_N$ depends only on the update rule $\bGamma$ and is independent of the population size $N.$
We summarize this observation as follows:
\beqn
\label{abs}
\cala_N=\big\{\be_j\in V_M:\bGamma(\be_j)=\be_j\big\}\qquad \qquad \forall\, N\in\nn.
\feqn
The following lemma asserts that if $\be_j$ is an absorbing state of $\bXn,$ then the type
$j$ cannot be mixed into any stochastic equilibrium of the model, namely any other than $\be_j$ state $\bx\in \dmn$ with $x(j)>0$ is transient.
\begin{lemma}
If $\bx\in \dmn$ is a recurrent state of $\bXn$ and $x(j)>0,$ then $\be_j$ is
a recurrent state and belongs to the same closed communication class as $\bx.$
\end{lemma}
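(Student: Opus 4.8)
The plan is to reduce the statement to a single \emph{accessibility} claim and then read off everything from the combinatorial transition criterion \eqref{gr}. Recall the standard fact from finite Markov chain theory: if $\bx$ is recurrent and a state $\by$ is accessible from $\bx$ (that is, $P_N^m(\bx,\by)>0$ for some $m$), then $\by$ is itself recurrent and lies in the same closed communication class as $\bx$. Indeed, otherwise the chain started at $\bx$ would reach $\by$ with positive probability and then, since $\by\not\to\bx$, never return to $\bx$, contradicting the recurrence of $\bx$. Consequently it suffices to prove that $\be_j$ is accessible from $\bx$ whenever $\bx$ is recurrent and $j\in\calc(\bx)$.

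First I would produce a \emph{return loop}. Since $\bx$ is recurrent, the chain started at $\bx$ returns to $\bx$ with probability one, so there exist an integer $n\geq 1$ and a sequence $\bx=\bz_0,\bz_1,\ldots,\bz_n=\bx$ in $\dmn$ with $P_N(\bz_i,\bz_{i+1})>0$ for $0\leq i\leq n-1$. The heart of the argument is the final edge of this loop. Applying the first equivalence in \eqref{gr} to $P_N(\bz_{n-1},\bz_n)=P_N(\bz_{n-1},\bx)>0$ gives $\calc(\bx)\subset\calc\big(\bGamma(\bz_{n-1})\big)$; since $j\in\calc(\bx)$ by hypothesis, we obtain $j\in\calc\big(\bGamma(\bz_{n-1})\big)$. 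Because $\calc(\be_j)=\{j\}\subset\calc\big(\bGamma(\bz_{n-1})\big)$, the same criterion \eqref{gr} now yields $P_N(\bz_{n-1},\be_j)>0$. Combining this with $P_N^{\,n-1}(\bx,\bz_{n-1})>0$, which is furnished by the first $n-1$ steps of the loop, we conclude $P_N^{\,n}(\bx,\be_j)>0$, so $\be_j$ is accessible from $\bx$. By the reduction of the first paragraph, $\be_j$ is then recurrent and communicates with $\bx$, which is exactly the assertion of the lemma.

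I expect the main subtlety to be resisting the temptation to reach $\be_j$ directly from $\bx$ in a single step. That shortcut would require $\Gamma_j(\bx)>0$, i.e. $j\in\calc\big(\bGamma(\bx)\big)$, which can genuinely fail even though $x(j)>0$: for a general update rule $\bGamma$ with mutations, type $j$ may momentarily disappear from $\bGamma(\bx)$ and be regenerated only at a later step. The loop argument circumvents this precisely by transferring the positivity requirement from $\bx$ itself to its predecessor $\bz_{n-1}$, where \eqref{gr} forces $\Gamma_j$ to be positive \emph{because} type $j$ must reappear at $\bx$ to close the cycle. This is the only place where recurrence (rather than mere presence of type $j$ in $\bx$) is used, and it is what makes the statement true for an arbitrary $\bGamma$ rather than only under a no-mutation assumption.
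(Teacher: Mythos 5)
Your proof is correct and takes essentially the same approach as the paper: the paper likewise passes to a predecessor state $\by$ in the class of $\bx$ with $P_N(\by,\bx)>0$ (your $\bz_{n-1}$), deduces $\Gamma_j(\by)>0$ from the positivity of that transition together with $x(j)>0$, hence $P_N(\by,\be_j)>0$, and concludes via recurrence. The only cosmetic difference is packaging --- you phrase it as accessibility of $\be_j$ from $\bx$ along a return loop, whereas the paper invokes communication-class membership of $\by$ directly and treats the absorbing case separately, a case split your loop argument covers uniformly.
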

\begin{proof}
By \eqref{abs}, a state $\bx\in\dmn$ cannot be absorbing if $\bx\neq \be_j$ and $x(j)>0.$
If $\bx$ is recurrent and non-absorbing, then there is a state $\by\in \dmn$ that belongs to the same
closed communication class as $\bx$ and such that $P_N(\by,\bx)>0.$ In view of \eqref{model} this implies that $\Gamma_j(\by)>0$
and hence $P_N(\by,\be_j)>0.$ Since the state $\by$ is recurrent, $\be_j$ must belong to the same communication class as $\by$ and $\bx,$
and in particular is recurrent. The proof is complete.
\end{proof}
Recall \eqref{sjn}. The same argument as in the above lemma shows that if an interior point $\bx$ of a simplex $\djn$ is recurrent
for $\bXn,$ then the whole simplex $\djn$ belongs to the (recurrent) closed communication class of $\bx.$
One can rephrase this observation as follows:
\begin{theorem}
\label{bc}
Any recurrent class of the Markov chain $\bXn$ has the form of a simplicial complex $\bigcup_\ell \Delta_{[J_\ell],N},$ where $J_\ell$
are (possibly overlapping) subsets (possibly singletons) of $S_M.$
\end{theorem}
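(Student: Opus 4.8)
The plan is to prove the two-sided inclusion $C=\bigcup_{\bx\in C}\Delta_{[\calc(\bx)],N}$ for an arbitrary recurrent class $C$ of $\bXn$; indexing the distinct supports $\calc(\bx)$, $\bx\in C$, as $J_1,J_2,\ldots$ then exhibits $C$ in the claimed form $\bigcup_\ell\Delta_{[J_\ell],N}$, with the $J_\ell$ allowed to overlap and to be singletons. One inclusion is trivial, since every $\bx\in C$ lies in $\Delta_{[\calc(\bx)],N}$. So the real content is the reverse inclusion $\Delta_{[\calc(\bx)],N}\subseteq C$ for each $\bx\in C$.

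To prove this, fix $\bx\in C$ and put $J=\calc(\bx)$. If $\bx$ is absorbing, then $\bx=\be_j$ and $J=\{j\}$ by \eqref{abs}, so $\Delta_{[J],N}=\{\be_j\}\subseteq C$ and there is nothing more to check. Otherwise $\bx$ is recurrent and non-absorbing, hence the chain almost surely returns to $\bx$; since $C$ is closed, the entire return excursion stays in $C$, and the state $\by$ occupied one step before a return satisfies $\by\in C$ and $P_N(\by,\bx)>0$. At this point I would invoke the third equivalence in \eqref{gr}: because $\calc(\bz)\subseteq J$ for every $\bz\in\Delta_{[J],N}$, the single admissible transition $\by\to\bx$ forces $P_N(\by,\bz)>0$ simultaneously for all $\bz\in\Delta_{[J],N}$.

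It then remains to apply the standard Markov-chain fact (see \cite{lawler}) that any state accessible from a recurrent state is itself recurrent and communicates with it. Since $\by\in C$ is recurrent and $\by\to\bz$ in one step, every $\bz\in\Delta_{[J],N}$ communicates with $\by$ and therefore belongs to $C$, giving $\Delta_{[J],N}\subseteq C$ as desired. Running $\bx$ over all of $C$ yields the announced union.

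I expect the substance to reside entirely in the middle step, where \eqref{gr} does the real work: a single admissible one-step transition into $\bx$ automatically generates admissible transitions into the whole face $\Delta_{[\calc(\bx)],N}$, which is exactly why recurrent classes cannot be arbitrary sets but must close up into unions of full subsimplices. I do not anticipate a genuine analytic obstacle; the only care required is the bookkeeping split between the absorbing-singleton case and the non-absorbing case, and the remark that the predecessor $\by$ can be located inside $C$ precisely because $C$ is closed under the dynamics.
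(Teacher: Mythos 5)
Your proposal is correct and takes essentially the same route as the paper: the paper proves the vertex case as a lemma (produce a within-class predecessor $\by$ with $P_N(\by,\bx)>0$, apply \eqref{gr}, and use that states accessible from a recurrent state belong to its class) and then remarks that the identical argument promotes any recurrent $\bx$ to the whole face $\Delta_{[\calc(\bx)],N}$, which is precisely the reverse inclusion you establish. Your separate treatment of the absorbing case is a harmless cosmetic difference, since an absorbing state is its own one-step predecessor and the same argument applies.
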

With applications in mind, we collect some straightforward implications of this proposition in the next corollary.
\begin{corollary}
\label{bcc}
\item [(i)] If there is a stationary distribution $\pi$ of $\bXn$ and $J\subset S_M$ such that $\pi(\bx)>0$ for some
interior point $\bx\in\djn^\circ,$ then $\pi(\by)>0$ for all $\by\in \djn.$  Furthermore, the following holds in this case
unless $\Gamma(\by)\cdot \by=0$ (that is, $\by$ and $\bGamma(\by)$ are orthogonal) for all $\by\in \djn:$
\begin{itemize}
\item [(a)] Let $C$ be the (closed) communication class to which the above $\bx$ (and hence the entire $\djn$) belongs. Then
the Markov chain $\bXn$  restricted to $C$ is aperiodic.
\item [(b)] Let $T$ be the first hitting time of $C,$ namely $T=\inf\{k\in\zz_+:\bXn_k\in C\}.$
As usual, we use here the convention that $\inf \emptyset =+\infty.$ Then
\beq
\pi(\by)=\lim_{k\to\infty} P(\bXn_k=\by|\bXn_0=\bz,T<\infty)
\feq
for all $\by\in C$ and $\bz\in\dmn.$ In particular,
$\pi(\by)=\lim_{k\to\infty}P_N^k(\bz,\by)$ whenever $\by,\bz\in C.$
\end{itemize}
\item [(ii)] A stationary distribution $\pi$ of $\bXn$ such that $\pi(\bx)>0$ for some interior point
$\bx\in \dmn^\circ$ exists if and only if $\bXn$ is irreducible and aperiodic,
in which case the stationary distribution is unique and $\pi(\by)=\lim_{k\to\infty} P^k_N(\bx,\by)>0$ for any $\bx,\by\in\dmn.$
\item [(iii)] Let $B=\big\{\exists~k\in\nn: \bXn_k\in \partial\big(\dmn\big)~\mbox{\rm for all}~m\geq k\big\}$
be the event that the trajectory of $\bXn$ reach eventually the boundary of the simplex and stays there forever.
Then,  either the Markov chain $\bXn$ is irreducible and aperiodic or $P(B)=1$ for any state $\bXn_0.$
\item [(iv)] If $\bGamma(\be_j)=\be_j$ for some $j\in S_M,$ then $\lim_{k\to\infty} \Xn_k(j)\in\{0,1\},$ \as,
for any initial distribution of $\bXn.$
\end{corollary}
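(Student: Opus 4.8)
The plan is to read off all four parts from the structural facts already in place for $\bXn$: the support criterion \eqref{gr}, the description \eqref{abs} of absorbing states, the lemma and the observation preceding Theorem~\ref{bc}, the simplicial-complex form of recurrent classes in Theorem~\ref{bc}, together with the standard finite-chain facts surveyed above (a stationary law is a convex combination of the unique stationary laws of the recurrent classes, each strictly positive on its own class; a finite chain is absorbed into some recurrent class in finite time almost surely; an irreducible aperiodic finite chain is ergodic with a strictly positive limit). Throughout, $C$ denotes the communication class of the relevant state.

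For part (i), since $\pi$ is stationary and $\pi(\bx)>0$, the state $\bx$ is recurrent; as $\bx\in\djn^\circ$, the observation preceding Theorem~\ref{bc} gives $\djn\subseteq C$, and because a stationary law restricted to a recurrent class is strictly positive there, $\pi(\by)>0$ for every $\by\in\djn$. For the aperiodicity statement (a), I would use the hypothesis that $\bGamma(\by)\cdot\by>0$ for some $\by\in\djn$: it yields an index $i$ with $y(i)>0$ and $\Gamma_i(\by)>0$, i.e. $i\in\calc(\by)\cap\calc(\bGamma(\by))$; since $i\in\calc(\by)\subseteq J$ we get $\be_i\in\djn\subseteq C$. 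Let $d$ be the period of the restricted chain and $C_0,\dots,C_{d-1}$ its cyclic classes, with $\by\in C_r$. By \eqref{gr}, $\calc(\be_i)=\{i\}\subseteq\calc(\bGamma(\by))$ gives $\by\to\be_i$ in one step, so $\be_i\in C_{r+1}$. On the other hand, recurrence of $C$ furnishes an in-class predecessor $\bw$ with $\bw\to\by$; then $\calc(\by)\subseteq\calc(\bGamma(\bw))$ by \eqref{gr}, and since $i\in\calc(\by)$ this forces $\bw\to\be_i$ as well, whence $\be_i\in C_r$. Thus $C_r=C_{r+1}$ and $d=1$. This short-circuit, pinning $\be_i$ into two consecutive cyclic classes, is the one genuinely delicate point, and I expect it to be the main obstacle.

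Parts (i)(b) and (ii) then follow from ergodicity. Once (a) makes the chain on $C$ irreducible and aperiodic, the classical convergence theorem gives $P_N^k(\bw,\by)\to\pi(\by)$ for $\bw,\by\in C$; conditioning on the finite entrance time $T$ into $C$ and decomposing over the (past) entrance step, the conditional weights total one while each factor $P_N^{k-n}(\bw,\by)\to\pi(\by)$, yielding the conditional limit in (b), and the ``in particular'' assertion is the special case. For (ii), I note that at a full-support point $\bx\in\dmn^\circ$ one has $\bGamma(\bx)\cdot\bx=\sum_j\Gamma_j(\bx)x(j)>0$ automatically, since all $x(j)>0$ and $\bGamma(\bx)$, being a probability vector, has a positive coordinate; hence the orthogonality caveat never triggers. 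Applying (i) with $\djn=\dmn$ gives $C=\dmn$ (irreducibility) and aperiodicity, so the stationary law is unique and $P_N^k(\bx,\by)\to\pi(\by)>0$ for all $\bx,\by$. The converse is immediate, an irreducible aperiodic finite chain being ergodic with a strictly positive stationary law.

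For part (iii), I would split on whether some recurrent class meets $\dmn^\circ$. By Theorem~\ref{bc} a recurrent class has the form $\bigcup_\ell\Delta_{[J_\ell],N}$: if some $J_\ell=S_M$ the class is all of $\dmn$, so $\bXn$ is irreducible and, by the automatic overlap at interior points, aperiodic; otherwise every $J_\ell$ is a proper subset of $S_M$, so all recurrent states lie in $\partial(\dmn)$, and since the chain enters a recurrent class in finite time and then never leaves it, $P(B)=1$ for every initial state. Finally, for (iv), \eqref{abs} makes $\be_j$ absorbing, and the lemma preceding Theorem~\ref{bc} forces every recurrent state other than $\be_j$ to have vanishing $j$-th coordinate, for otherwise it would share a class with the absorbing $\be_j$; equivalently $j\notin\bigcup_\ell J_\ell$ for any recurrent class different from $\{\be_j\}$. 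Hence after the almost surely finite absorption time the coordinate $\Xn_k(j)$ is frozen at $1$ on $\{\be_j\}$ and at $0$ on every other recurrent class, so $\lim_{k\to\infty}\Xn_k(j)\in\{0,1\}$ almost surely for any initial distribution.
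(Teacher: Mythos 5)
Your proposal is correct, and it is more complete than the paper's own treatment: the paper proves only item (a) of part (i), dismissing everything else as ``an immediate result of a direct combination of Theorem~\ref{bc} and general properties of finite-state Markov chains.'' Your arguments for the first claim of (i), for (i)(b), (ii), (iii) and (iv) are exactly that combination written out in detail (stationary laws of a finite chain vanish off recurrent classes and are strictly positive on any class they charge; the observation preceding Theorem~\ref{bc} pulls all of $\djn$ into $C$; Theorem~\ref{bc}, \eqref{abs} and the lemma preceding Theorem~\ref{bc} locate the recurrent states in (iii) and (iv); ergodicity of the restricted chain plus decomposition over the entrance time gives the limits), so there you follow the paper's intended route. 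Where you genuinely differ is (i)(a), the one part the paper proves. The paper zeroes out the coordinates of $\by$ at which $y(j)\Gamma_j(\by)=0$, calls the resulting point $\bz$, and asserts that $\bz\in\Delta_{[I],N}^\circ$ and $P_N(\bz,\bz)>0$, so that a one-step loop inside $C$ forces aperiodicity. That assertion does not follow from the hypothesis in the stated generality: as defined, the coordinates of $\bz$ need not sum to one, and by \eqref{gr} the inequality $P_N(\bz,\bz)>0$ is equivalent to $\calc(\bz)\subseteq\calc\big(\bGamma(\bz)\big)$, a condition on $\bGamma$ at the \emph{new} point $\bz$, whereas the hypothesis controls $\bGamma$ only at $\by$. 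Your cyclic-class argument avoids this entirely: you evaluate $\bGamma$ only at $\by$ and at an in-class predecessor $\bw$ of $\by$, so each of the transitions $\by\to\be_i$, $\bw\to\by$, $\bw\to\be_i$ is a literal application of \eqref{gr}, and placing $\be_i$ in two consecutive cyclic classes kills the period. In short, the paper's self-loop argument is shorter but has a real gap; your argument is slightly longer and closes it.

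One caveat, which you inherit from the statement rather than introduce: in (i)(b) the limit you compute is the stationary law of the chain restricted to $C$, and identifying it with $\pi$ on $C$ is legitimate only when $\pi$ charges no recurrent class other than $C$; otherwise the limit equals $\pi(\by)/\pi(C)$ rather than $\pi(\by)$. This imprecision sits in Corollary~\ref{bcc} itself (the paper's proof never addresses (b)), so it is not a defect of your proposal, but it would be worth a remark if the proof is written up.
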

We will only prove the claim in (a) of part (i), since the rest of the corollary is an immediate result of a direct combination of Theorem~\ref{bc}
and general properties of finite-state Markov chains reviewed in the beginning of the section.
\begin{proof}[Proof of Corollary~\ref{bcc}-(i)-a]
Suppose that $\bGamma(\by)\cdot \by>0$ for some $\by\in \djn.$ Let $\bz$ be the projection of $\by$ into $\Delta_{[I],M},$
where $I:=\{j\in J:y(j)\Gamma_j(\by)>0\}.$ That is,
\beq
z(i)=
\left\{
\begin{array}{ll}
y(i)&\mbox{\rm if}~y(i)\Gamma_i(\by)>0,
\\
0&\mbox{\rm otherwise}.
\end{array}
\right.
\feq
Then $\bz\in \Delta_{[I],M}^\circ$ and $P_N(\bz,\bz)>0.$ Since $\bz\in \djn,$ by Proposition~\ref{bc}
$\bz$ belongs to the same closed communication class as $\bx.$ The asserted aperiodicity of $\bXn$ restricted to $C$
is a direct consequence of the existence of the $1$-step communication loop at $\bz$ \cite{lawler}.
\end{proof}
\begin{example}
[\cite{entropy}]
If $\Gamma_j(\bx)=x(j)^\alpha f(\bx)$ for some $j\in S_M,$ function $f:\Delta_M\to \rr_+$ such that $f(\be_j)=1,$
and a constant $\alpha>0,$  then $\bGamma(\be_j)=\be_j.$ By part (iv) of Corollary~\ref{bcc}, this fact alone suffices to conclude
that $\lim_{k\to\infty} \Xn_k(j)\in\{0,1\},$ \as, for this specific type $j.$
\end{example}
A simple sufficient condition for the Markov chain $\bXn$ to be irreducible and aperiodic is that $\bGamma(\Delta_M)\subset
\Delta_M^\circ,$ in which case $P_N(\bx,\by)>0$ for any $\bx,\by\in \dmn.$ If $\bGamma\big(\dmn^\circ\big)\subset
\dmn^\circ$ but $\bGamma(\be_j)=\be_j$ for all $j\in S_M,$ the model can be viewed as a multivariate analog of what is termed in \cite{kimurac} as a Kimura class of Markov chains.  The latter describes evolution dynamics of a population that initially consists of two phenotypes, each one eventually either vanishes or takes over the entire population.
\subsection*{Appendix B. Permanence of the mean-field dynamics}
In this appendix, we focus on Wright-Fisher models with a permanent mean-field dynamics $\bGamma.$ The permanence means
that the boundary of the state space $\partial(\Delta_M)$ is a repeller for the mean-field dynamics, implying that all types under the mean-field dynamics will ultimately survive \cite{garay, HS, hutson, ckperm}. This condition is particularly relevant to the stochastic Wright-Fisher model because, as stated in Appendix~A, all absorbing states (possibly an empty set) of the Markov chain $\bXn$ lie within the boundary set $\partial(\Delta_M).$ Of course, in the absence of mutations, any vertex of $\Delta_M$ is an absorbing state for the Markov chain $\bXn,$ and the latter is going to eventually fixate on one of the absorbing states
with probability one (see Appendix~A for formal details). However, we showed in Section~\ref{mainr}
that a permanent mean-field dynamics induces metastability and a prolonged coexistence of type.
\begin{definition}[\cite{HS}]
$\bGamma$ is permanent if
there exists $\delta>0$ such that
\beq
\liminf_{k\to\infty} \psi_k(i)>\delta \qquad \forall\,i\in S_M,\bpsi_0\in \Delta_M^\circ.
\feq
\end{definition}
\par
See \cite{garay, coexit} and references therein for sufficient conditions for the permanence.
It turns out that under reasonable additional assumptions, $\bGamma$ is permanent for the fitness defined in either \eqref{lf4} or \eqref{ef4}.
More precisely, we have (for a more general version of the theorem, see \cite[Theorem~11.4]{garay} and \cite[Theorem~5]{rewhs}):
\begin{theorema}
[\cite{garay}]
\label{ept}
Let $\bA$ be an $M\times M$ matrix which satisfies the following condition: There exists $\by\in \Delta_M^\circ$ such that
\beq
\by^T\bA \bz> \bz^T\bA\bz
\feq
for all $\bz\in \partial(\Delta_M)$ solving the fixed-point equation $\bz=\frac{1}{\bz^T\bA\bz}\big(\bz\circ \bA\bz\big).$
\par
Then:
\item [(i)] There exists $\omega_0>0$ such that $\bGamma$ given by \eqref{replica} and \eqref{lf4} is permanent for all $\omega\in(0,\omega_0).$
\item [(ii)] $\bGamma$ given by \eqref{replica} and \eqref{ef4} is permanent for all $\beta>0.$
\end{theorema}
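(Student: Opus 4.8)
The plan is to establish permanence by the method of average (external) Lyapunov functions of Hofbauer, in the robust form underlying Theorem~11.4 of \cite{garay}. Recall that $\bGamma$ is permanent exactly when $\partial(\Delta_M)$ is a repeller for $\bpsi_k$, and that a sufficient condition is the existence of a $P:\Delta_M\to\rr_+$ vanishing precisely on $\partial(\Delta_M)$ whose per-step logarithmic increment along $\bGamma$ has a positive time-average over every orbit carried by the boundary. I would take
\[
P(\bx)=\prod_{i\in S_M} x(i)^{y(i)},\qquad \by\in\Delta_M^\circ \text{ the interior vector of the hypothesis.}
\]
Writing $\ol\varphi(\bx):=\sum_{j\in S_M}x(j)\varphi_j(\bx)$ and using $\sum_i y(i)=1$ together with $\Gamma_i(\bx)/x(i)=\varphi_i(\bx)/\ol\varphi(\bx)$ from \eqref{replica}, the increment is
\[
\Psi(\bx):=\log\frac{P\big(\bGamma(\bx)\big)}{P(\bx)}=\sum_{i\in S_M} y(i)\log\frac{\varphi_i(\bx)}{\ol\varphi(\bx)}.
\]
The task reduces to proving $\int\Psi\,d\mu>0$ for every ergodic $\bGamma$-invariant $\mu$ supported on $\partial(\Delta_M)$. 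A clean simplification is available: for $i\in\mathrm{supp}(\mu)$ the integrand is a coboundary, $\log(\varphi_i/\ol\varphi)=(\log x_i)\circ\bGamma-\log x_i$, so its $\mu$-average vanishes; hence $\int\Psi\,d\mu=\sum_{i\notin\mathrm{supp}(\mu)} y(i)\,\lambda_i(\mu)$, where $\lambda_i(\mu):=\int\log(\varphi_i/\ol\varphi)\,d\mu$ is the invasion rate of the absent type $i$ against $\mu$.

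The decisive computation is the evaluation of these rates at the boundary fixed points, i.e. at $\bz\in\partial(\Delta_M)$ solving $\bz=\tfrac{1}{\bz^T\bA\bz}(\bz\circ\bA\bz)$; on $\mathrm{supp}(\bz)$ such a point satisfies $(\bA\bz)(i)=\bz^T\bA\bz=:c$. For the exponential fitness \eqref{ef4} one has $\log\varphi_i(\bx)=\beta(\bA\bx)(i)$ and $\ol\varphi(\bz)=e^{\beta c}$, so
\[
\Psi(\bz)=\beta\,\by^T\bA\bz-\beta c=\beta\big(\by^T\bA\bz-\bz^T\bA\bz\big)>0\qquad\text{for every }\beta>0,
\]
the inequality being exactly the hypothesis. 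For the linear fitness \eqref{lf4} with constant baseline $\bb=\be$, expanding $\log\varphi_i$ and $\log\ol\varphi$ in $\omega$ gives, uniformly on $\Delta_M$,
\[
\Psi(\bx)=\omega\big(\by^T\bA\bx-\bx^T\bA\bx\big)+O(\omega^2),
\]
and the boundary equilibria are \emph{independent of} $\omega$ (the fixed-point equation reduces to $(\bA\bx)(i)=\bx^T\bA\bx$ on the support for every $\omega$), so again the hypothesis forces $\Psi(\bz)>0$ at each boundary equilibrium once $\omega$ is small.

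The hard part is upgrading positivity of $\Psi$ (equivalently of the invasion rates) from the boundary \emph{equilibria} to every boundary ergodic measure $\mu$, which is what the criterion genuinely requires. Here I would exploit the face structure of $\mathrm{supp}(\mu)$ and the replicator averaging identity to express $\int\big(\by^T\bA\bx-\bx^T\bA\bx\big)\,d\mu$ as a convex combination dominated by its values at the rest points sitting in $\mathrm{supp}(\mu)$, reducing the general case to the equilibrium values already shown positive; this reduction (together with the attendant integrability of $\log x_i$ on supports) is precisely the content packaged in Garay's robust permanence theorem. Finally, robustness — openness of the strict inequalities over the finitely many boundary faces — lets the uniform $O(\omega^2)$ error be absorbed for all $\omega\in(0,\omega_0)$ with some $\omega_0>0$, giving part~(i), while part~(ii) holds for all $\beta>0$ since there the equilibrium identity is exact up to the factor $\beta$. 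I expect the main obstacle to be exactly this passage from equilibria to arbitrary invariant measures, and, within the linear case, controlling the $\omega$-expansion of $\Psi$ \emph{uniformly} as one approaches the lower-dimensional faces where $\ol\varphi$ is built from fewer coordinates.
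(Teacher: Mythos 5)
The paper itself contains no proof of this statement: it is quoted from \cite{garay} (Theorem~11.4 there; see also Theorem~5 of \cite{rewhs}), so your attempt has to stand on its own merits. Your architecture is the right one, and it is indeed the architecture behind the cited result: the average Lyapunov function $P(\bx)=\prod_{i\in S_M} x(i)^{y(i)}$, the reduction of permanence to positivity of $\int\Psi\,d\mu$ over ergodic invariant measures carried by $\partial(\Delta_M)$, the coboundary cancellation over $\mathrm{supp}(\mu)$ (which, as a minor caveat, needs an integrability argument --- boundedness of $\log(\varphi_i/\ol\varphi)$ plus Poincar\'{e} recurrence --- since $\log x(i)$ need not be $\mu$-integrable a priori), and the evaluation of $\Psi$ at boundary fixed points are all sound. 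The genuine gap is exactly at the step you flag as ``the hard part'': to pass from boundary \emph{equilibria} to arbitrary boundary \emph{ergodic measures} you appeal to ``Garay's robust permanence theorem,'' which is precisely the theorem you are being asked to prove. As written the argument is circular, and the mechanism you hint at --- writing $\int(\by^T\bA\bx-\bx^T\bA\bx)\,d\mu$ as ``a convex combination dominated by its values at the rest points sitting in $\mathrm{supp}(\mu)$'' --- is not the right one and cannot be made to work as stated: an ergodic invariant measure need not charge any neighborhood of a rest point, and no such domination is available.

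What actually closes the gap is an exact identity, and it also explains the asymmetry between parts (i) and (ii) that your sketch leaves unexplained. For the fitness \eqref{ef4} the log-fitness $\log\varphi_i(\bx)=\beta(\bA\bx)(i)$ is \emph{linear} in $\bx$; hence the coboundary identities $\int\big[\beta(\bA\bx)(i)-\log\ol\varphi(\bx)\big]\,d\mu=0$ for $i\in\mathrm{supp}(\mu)$ say that the barycenter ${\bf q}:=\int\bx\,d\mu$ satisfies $(\bA{\bf q})(i)=\frac{1}{\beta}\int\log\ol\varphi\,d\mu=:c$ for every $i\in\mathrm{supp}(\mu)\supseteq\calc({\bf q})$, whence ${\bf q}^T\bA{\bf q}=c$ and ${\bf q}$ is \emph{itself} a boundary fixed point of the $\bA$-replicator; therefore $\int\Psi\,d\mu=\beta\big(\by^T\bA{\bf q}-{\bf q}^T\bA{\bf q}\big)>0$ directly from the hypothesis, for every $\beta>0$, which is claim (ii). For the fitness \eqref{lf4} the log-fitness is no longer linear, the barycenter is only an $O(\omega)$-approximate fixed point, and --- the point your uniform $O(\omega^2)$ expansion does not address --- the ergodic measures themselves depend on $\omega$, so positivity at the ($\omega$-independent) rest points does not transfer termwise. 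One needs a compactness argument: if $\omega_n\to 0$ and $\mu_n$ are boundary ergodic measures for $\bGamma_{\omega_n}$ with $\int\Psi_{\omega_n}\,d\mu_n\le 0$, then, passing to a subsequence whose supports lie in a common face, the approximate coboundary identities force every limit point ${\bf q}^*$ of the barycenters ${\bf q}_n$ to be a boundary fixed point of the $\bA$-replicator, while $\int\Psi_{\omega_n}\,d\mu_n=\frac{\omega_n}{(1-\omega_n)b}\big[\by^T\bA{\bf q}_n-{\bf q}_n^T\bA{\bf q}_n+O(\omega_n)\big]$ has bracket converging to $\by^T\bA{\bf q}^*-{\bf q}^{*T}\bA{\bf q}^*>0$, a contradiction; this produces the required $\omega_0>0$ for part (i). Without this (or an equivalent) limiting argument, neither part is established by your sketch.
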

\par
In Section~\ref{exits} considered a stochastic Wright-Fisher model with a permanent mean-field map $\bGamma.$ This model has a global attractor, consisting of a unique equilibrium point. A sufficient condition for such a dynamics is given by the following theorem \cite{losakin}.
Recall \eqref{seq}.
\begin{theorema}
[\cite{losakin}]
\label{bsa}
Let Assumption~\ref{assume7} hold. Then the following holds true:
\begin{itemize}
\item [(a)]  Recall $\bpsi_k$ from \eqref{psi}. If $\bpsi_0\in \Delta_M^\circ,$ then
$\lim_{k\to\infty} \bpsi_k=\bmz.$
\item [(b)] $\bGamma$ is a diffeomorphism.
\end{itemize}
\end{theorema}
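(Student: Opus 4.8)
The plan is to treat parts (a) and (b) separately, in both cases exploiting that, since $\bb=\be$, the map $\bGamma$ from \eqref{replica}--\eqref{lf4} is the replicator map of the \emph{symmetric} matrix $\widetilde\bA:=(1-\omega)\be\be^T+\omega\bA$; indeed $\big(\widetilde\bA\bx\big)(i)=(1-\omega)+\omega(\bA\bx)(i)=\varphi_i(\bx)$ for $\bx\in\Delta_M$. Note that $\widetilde\bA$ has strictly positive entries, is symmetric, and inherits negative-definiteness on $W_M$ from \eqref{seq1} (because $\be^T\bw=0$ for $\bw\in W_M$). Writing $h(\bx):=\bx^T\widetilde\bA\bx$, one has on $\Delta_M$ that $h(\bx)=(1-\omega)+\omega\,\bx^T\bA\bx$ and that $\Gamma_i(\bx)=x(i)\,\partial_i h(\bx)/\sum_j x(j)\partial_j h(\bx)$, i.e. $\bGamma$ is exactly the Baum--Eagon growth transformation of the degree-two polynomial $h$.

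For part (a) I would argue as follows. Since $h$ is a polynomial with nonnegative (in fact positive) coefficients, the Baum--Eagon inequality yields $h\big(\bGamma(\bx)\big)\ge h(\bx)$, with equality if and only if $\bGamma(\bx)=\bx$; thus $h$ is a strict Lyapunov function for $\bpsi_k$. Negative-definiteness of $\widetilde\bA$ on $W_M$ makes $h$ strictly concave on the affine hull of $\Delta_M$, so it has a unique maximizer on $\Delta_M$, namely the unique interior critical point, which by \eqref{seq3} is $\bmz$ (there $\widetilde\bA\bmz=\big((1-\omega)+\omega c\big)\be$); the same computation shows $\bmz$ is the \emph{only} interior fixed point of $\bGamma$. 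To force an interior orbit to $\bmz$ rather than to a boundary fixed point (vertices are always fixed), I would invoke permanence: taking $\by=\bmz$ in Theorem~\ref{ept}, the required inequality $\bmz^T\bA\bz>\bz^T\bA\bz$ holds for every $\bz\in\partial(\Delta_M)$, because $\bA\bmz=c\be$ gives $\bmz^T\bA\bz=c$ while \eqref{seq1} gives $\bz^T\bA\bz<c$ for all $\bz\neq\bmz$. Hence $\partial(\Delta_M)$ is a repeller and any orbit with $\bpsi_0\in\Delta_M^\circ$ stays in a compact subset of $\Delta_M^\circ$. A discrete LaSalle argument then confines the $\omega$-limit set to the intersection of the fixed-point set of $\bGamma$ with $\Delta_M^\circ$, which is $\{\bmz\}$, giving $\bpsi_k\to\bmz$.

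For part (b), first note that $\bGamma$ extends to a $C^\infty$ map on a neighborhood of $\Delta_M$ (its denominator $h$ is bounded below by a positive constant on $\Delta_M$, since $\bA$ has positive entries), and that $\Gamma_i(\bx)>0\iff x(i)>0$, so $\bGamma$ preserves every face $\Delta_{[I]}$ and its relative interior. It therefore suffices to show $\bGamma$ is a diffeomorphism of each open face onto itself and then patch across faces by continuity and induction on dimension. On $\Delta_M^\circ$ I would pass to the log-ratio chart $u_i=\log\big(x(i)/x(M)\big)$, $i<M$, in which $\bGamma$ takes the perturbed-identity form $u_i\mapsto u_i+\big[\log\varphi_i(\bx)-\log\varphi_M(\bx)\big]$. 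The plan is to show the Jacobian of this map is everywhere nonsingular---this is where negative-definiteness of $\widetilde\bA$ enters, through the same ``diagonal plus symmetrizable rank-correction'' structure that is made explicit at $\bx=\bmz$ in \eqref{525} of the proof of Proposition~\ref{radius}, now carried out at a general interior point---and that the map is proper on $\rr^{M-1}$; Hadamard's global inverse function theorem then makes $\bGamma|_{\Delta_M^\circ}$ a diffeomorphism onto $\Delta_M^\circ$.

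The routine parts are the Baum--Eagon monotonicity and the concavity/uniqueness bookkeeping in (a), and the smoothness and face-preservation in (b). I expect the genuine obstacle to lie in (b): establishing global injectivity, i.e. the everywhere-nonsingularity of the Jacobian in the log-ratio chart together with properness. Unlike the single-point computation in Proposition~\ref{radius}, this must be controlled uniformly over $\Delta_M^\circ$, and it is precisely the strict negative-definiteness \eqref{seq1} (equivalently, strict stability of the game $\widetilde\bA$) that should make the symmetric correction term sign-definite and thereby keep the determinant from vanishing. A secondary subtlety in (a) is the $\omega$-dependence in Theorem~\ref{ept}: to cover the full range $\omega\in(0,1)$ one would bypass the small-$\omega$ clause by using the \emph{global} strict inequality $\bmz^T\widetilde\bA\bz>\bz^T\widetilde\bA\bz$ (valid for all $\bz\neq\bmz$) to show directly that the boundary repels interior orbits.
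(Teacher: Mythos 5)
The first thing to say is that the paper contains no proof of this statement: the bracketed citation in the theorem header is the ``proof''. Theorem~\ref{bsa} is imported verbatim from Losert and Akin \cite{losakin} and is then used as a black box (the proofs of Proposition~\ref{radius}, Lemma~\ref{fx} and Corollary~\ref{metac} each invoke ``$\bGamma$ is a diffeomorphism \cite{losakin}''). So there is no internal argument to compare you against; what you have written is an attempted reproof of Losert--Akin's theorem, and it has to stand on its own.

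For part (a) your plan is essentially the classical route and most of it is correct: the symmetrization $\witi\bA=(1-\omega)\be\be^T+\omega\bA$, the Baum--Eagon/Kingman monotonicity of $h(\bx)=\bx^T\witi\bA\bx$ (the paper itself records this Lyapunov property at the end of Appendix~B, citing \cite{kingman,mandel}), the strict concavity of $h$ on the affine hull, and the identification of $\bmz$ as the unique interior fixed point are all fine, as is your verification of the hypothesis of Theorem~\ref{ept} with $\by=\bmz$. The step you leave open---boundary repulsion for \emph{all} $\omega\in(0,1)$, not just the small-$\omega$ regime of Theorem~\ref{ept}(i)---is a real issue, and your proposed bypass is not a one-liner: in discrete time a global ESS does \emph{not} automatically attract (the relative-entropy/Jensen argument that works for the continuous-time replicator runs in the wrong direction for the discrete map). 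It can, however, be completed along your lines: (i) strict concavity of $h$ on each face gives at most one fixed point per open face, hence finitely many fixed points in all, and the LaSalle argument (plus the standard ``small steps near fixed points'' argument) forces the orbit to converge to a single fixed point ${\bf p}$; (ii) if ${\bf p}$ were a boundary fixed point, then $\bmz^T\witi\bA{\bf p}-{\bf p}^T\witi\bA{\bf p}=-\bw^T\witi\bA\bw>0$ (where ${\bf p}=\bmz+\bw$, $\bw\in W_M$) forces some $j\notin\calc({\bf p})$ with $(\witi\bA{\bf p})_j>{\bf p}^T\witi\bA{\bf p}$, and then $\psi_k(j)$ grows geometrically near ${\bf p}$, contradicting $\psi_k(j)\to 0$. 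With that insertion, (a) is sound.

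Part (b) is where the proposal has a genuine gap, and you say so yourself: the everywhere-nonsingularity of the Jacobian (equivalently, global injectivity) is not an obstacle lying outside the proof---it \emph{is} the content of the diffeomorphism theorem, and deferring it leaves (b) as a strategy rather than a proof. Two further points. First, your guess that the strict negative-definiteness \eqref{seq1} is what keeps the determinant from vanishing is almost certainly a misattribution: Losert and Akin establish the diffeomorphism property for symmetric matrices with positive entries with no definiteness hypothesis, so the operative mechanism is the symmetric (gradient-like) structure together with positivity, not sign-definiteness; in any case you give no argument, and a single-point computation like \eqref{525} does not control the Jacobian uniformly over $\Delta_M^\circ$. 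Second, the final ``patch across faces by continuity and induction'' step is also incomplete: a smooth bijection of $\Delta_M$ that restricts to a diffeomorphism of each open face need not have a smooth inverse up to the boundary; you still need $D\bGamma$ nonsingular on $W_M$ at points of $\partial(\Delta_M)$. (By contrast, the properness half of your Hadamard argument is easy and should not be listed as part of the obstacle: in the log-ratio chart the correction $\log\varphi_i-\log\varphi_M$ is uniformly bounded on $\Delta_M$ because each $\varphi_i$ is bounded above and below by positive constants, so the chart map is the identity plus a bounded perturbation.)
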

Note that the weak selection condition (cf. \cite{mweaks, paintbox, wfnowak}) imposed in part (i) of Theorem~\ref{ept} is not required
in the conditions of Theorem~\ref{bsa}.
\par
The evolutionary games with $\bA=\bA^T$ are called \textit{partnership games} \cite{hoffa}. We refer to \cite{partner, signal, partner3} for applications of partnership games in evolutionary biology. Condition \eqref{seq1} implies the existence of a unique evolutionary stable equilibrium for the evolutionary game defined by the payoff matrix $\bA.$  For a symmetric reversible matrix $\bA$ this condition holds if and only if $\bA$ has exactly one positive eigenvalue \cite{kingman, mandel}. Condition \eqref{seq3} ensures that the equilibrium is an interior point.  Note that since the equilibrium is unique, the conditions of Theorem~\ref{ept} are automatically satisfied, that is $\bpsi_k$ is permanent. Finally, under the conditions of Theorem~\ref{bsa}, the average payoff $\bx^T\bA\bx$ is a Lyapunov function \cite{kingman, mandel}:
\beq
\bpsi_k^T\bA\bpsi_k<\bpsi_{k+1}^T\bA\bpsi_{k+1}<\bmz^T \bA\bmz\qquad \qquad \forall\,k\in\zz_+,
\feq
as long as $\bpsi_0\in \Delta_M^\circ$ and $\bpsi_0\neq \bmz.$
{\small

}

\end{document}